\setlist[enumerate]{parsep=0pt plus 4pt,topsep=0pt plus 4pt}
\definecolor{darkblue}{RGB}{0,0,160}
\definecolor{lightred}{rgb}{1,.3,.3}
\definecolor{darkpurple}{rgb}{.5,.2,.9}
\newcommand\excise[1]{}
\newtheorem{thm}{Theorem}[section]
\newtheorem{lemma}[thm]{Lemma}
\newtheorem{cor}[thm]{Corollary}
\newtheorem{prop}[thm]{Proposition}
\theoremstyle{definition}
\newtheorem{example}[thm]{Example}
\newtheorem{remark}[thm]{Remark}
\newtheorem{defn}[thm]{Definition}
\newtheorem{conv}[thm]{Convention}
\numberwithin{equation}{section}
\renewcommand\labelenumi{\theenumi.}
\newcounter{separated}
\newcommand{\Ring}[1]{\ensuremath{\mathbb{#1}}}
\renewcommand\>{\rangle}
\newcommand\<{\langle}
\newcommand\0{\mathbf{0}}
\newcommand\cI{\mathcal{I}}
\newcommand\cJ{\mathcal{J}}
\newcommand\cL{\mathcal{L}}
\newcommand\MM{\mathbb{M}}
\newcommand\NN{\Ring{N}}
\newcommand\QQ{\Ring{Q}}
\newcommand\RR{\Ring{R}}
\newcommand\XX{{\mathfrak X}}
\newcommand\ZZ{\Ring{Z}}
\newcommand\bbI{\mathbb{I}}
\newcommand\ee{{\mathbf e}}
\newcommand\ff{{\mathbf f}}
\newcommand\ii{{\mathbf i}}
\newcommand\jj{{\mathbf j}}
\newcommand\kk{\Bbbk}
\newcommand\mm{{\mathfrak m}}
\newcommand\nn{{\mathbf n}}
\newcommand\pp{{\mathfrak p}}
\newcommand\ppp{{\mathbf p}}
\newcommand\qq{{\mathbf q}}
\newcommand\rr{{\mathbf r}}
\newcommand\uu{{\mathbf u}}
\newcommand\vv{{\mathbf v}}
\newcommand\ww{{\mathbf w}}
\newcommand\xx{{\mathbf x}}
\newcommand\zz{{\mathbf z}}
\newcommand\hhh{{\mathbf h}}
\newcommand\cM{\mathcal{M}}
\newcommand\cN{{\mathcal N}}
\newcommand\cP{P}
\newcommand\oI{{\hspace{.25ex}\overline{\hspace{-.25ex}I}}}
\newcommand\vC{\check{\mathcal C}}
\newcommand\mmm{\mathbf{m}}
\renewcommand\aa{{\mathbf a}}
\renewcommand\ggg{{\mathbf g}}
\renewcommand\phi{\varphi}
\newcommand\sat{\mathrm{sat}}
\newcommand\gs{\geqslant}
\newcommand\ls{\leqslant}
\newcommand\too{\longrightarrow}
\newcommand\from{\leftarrow}
\newcommand\into{\hookrightarrow}
\newcommand\spot{{\hbox{\raisebox{1pt}{\tiny$\scriptscriptstyle\bullet$}}}}
\newcommand\minus{\smallsetminus}
\newcommand\simto{\mathrel{\!\ooalign{$\fillrightmap$\cr\raisebox{.75ex}{$\,\sim\ \hspace{.2ex}$}}}}
\newcommand\bigcupdot{\makebox[0pt][l]{$\hspace{1.05ex}\cdot$}\textstyle\bigcup}
\newcommand\Bigcupdot{\mathop{\rlap{$\hspace{1ex}\cdot$}{\bigcup}}}
\newcommand\fillrightmap{\mathord- \mkern-6mu
	\cleaders\hbox{$\mkern-2mu \mathord- \mkern-2mu$}\hfill
	\mkern-6mu \mathord\rightarrow}
\renewcommand\iff{\Leftrightarrow}
\renewcommand\epsilon{\varepsilon}
\renewcommand\implies{\Rightarrow}
\newcommand\ol[1]{{\overline{#1}}}
\newcommand{\aoverb}[2]{{\genfrac{}{}{0pt}{1}{#1}{#2}}}
\def\twoline#1#2{\aoverb{\scriptstyle {#1}}{\scriptstyle {#2}}}
\DeclareMathOperator\Ext{Ext} 
\DeclareMathOperator\Hom{Hom} 
\DeclareMathOperator\Ass{Ass} 
\DeclareMathOperator\Min{Min} 
\DeclareMathOperator\reg{reg} 
\DeclareMathOperator\soc{soc} 
\DeclareMathOperator\Tor{Tor} 
\DeclareMathOperator\Top{top} 
\DeclareMathOperator\NP{NP} 
\DeclareMathOperator\adeg{adeg} 
\DeclareMathOperator\eext{
	\hspace{.6pt}{\underline{\hspace{-.6pt}{\rm Ext}\hspace{-.6pt}}\hspace{1pt}}}
\DeclareMathOperator\hhom{
	\hspace{.6pt}{\underline{\hspace{-.6pt}{\rm Hom}\hspace{-.6pt}}\hspace{1pt}}}
\DeclareMathOperator\hdeg{hdeg} 
\DeclareMathOperator\rank{rank}
\DeclareMathOperator\depth{depth}
\DeclareMathOperator\image{im} 
\DeclareMathOperator\length{\ell}
\newcommand\HH[3]{H^{#1}_{#2}(#3)}
\newcommand\noheight[1]{\raisebox{0pt}[0pt][0pt]{#1}}
\newcommand\dis{\displaystyle}
\definecolor{myorange}{RGB}{255, 160, 70}
\definecolor{darkgreen}{RGB}{0, 166, 0}
\begin{document}

\mbox{}
\vspace{-3ex}
\title[Quasipolynomial behavior via constructibility in multigraded algebra]%
      {Quasipolynomial behavior via constructibility in multigraded algebra}
\author[H.\,Dao]{Hailong Dao}
\address{Department of Mathematics\\University of Kansas\\Lawrence, KS 66045}
\urladdr{\url{http://people.ku.edu/~hdao}}
\author[E.\,Miller]{Ezra Miller}
\address{Mathematics Department\\Duke University\\Durham, NC 27708}
\urladdr{\url{http://math.duke.edu/people/ezra-miller}}
\author[J.\,Monta\~no]{Jonathan Monta\~no}
\address{School of Mathematical and Statistical Sciences\\Arizona State University\\Tempe, AZ 85287}
\urladdr{\url{https://math.la.asu.edu/~montano}}
\author[C.\,O'Neill]{\\Christopher O'Neill}
\address{ Mathematics and Statistics Department\\San Diego State University\\San Diego, CA 92182}
\urladdr{\url{https://cdoneill.sdsu.edu}}
\author[K.\,Woods]{Kevin Woods}
\address{Department of Mathematics\\Oberlin College\\Oberlin, OH 44074}
\urladdr{\url{https://www2.oberlin.edu/faculty/kwoods}}

\makeatletter
\@namedef{subjclassname@2020}{\textup{2020} Mathematics Subject Classification}
\makeatother
\subjclass[2020]{Primary: 13D07, 13A30, 13D45, 13L05, 13A02, 05E40, 52B20,    52C07,  20M14, 03F30, 03C10.}

\date{23 December 2025.  The authors gratefully acknowledge the
American Institute of Mathematics (AIM), where much of this research
was carried out, and the Vietnam Institute for Advanced Study in
Mathematics (VIASM), where N.\,V.~Trung and L.\,T.~Hoa hosted EM for a
productive week of development.  EM~was partly supported by
NSF~DMS-2515765.  JM~was partly supported by NSF~DMS-2401522.  Thanks
to Lukas Waas for finding an error in an earlier proof of
Proposition~\ref{p:multiply-ideal-module}.}

\begin{abstract}
Piecewise quasipolynomial growth of Presburger counting functions
combines with tame persistent homology module theory to conclude
piecewise quasipolynomial behavior of constructible families of finely
graded modules over constructible commutative semigroup rings.
Functorial preservation of constructibility for families under local
cohomology, $\Tor$, and $\Ext$ yield piecewise quasipolynomial,
quasilinear, or quasiconstant growth statements for length of local
cohomology, $a$-invariants, regularity, depth; length of $\Tor$ and
Betti numbers; length of $\Ext$ and Bass numbers; associated primes
via $v$-invariants; and extended degrees, including the usual degree,
Hilbert--Samuel multiplicity, arithmetic degree, and
homological~degree.
\end{abstract}
\maketitle

\vspace{-1.2ex}
\setcounter{tocdepth}{2}
\tableofcontents

\section{Introduction}\label{s:intro}

Finiteness conditions on families of algebraic, combinatorial, or
geometric objects lead to predictable growth behavior.  Typically this
growth $n \mapsto P(n)$ is roughly polynomial, or more precisely
\emph{quasipolynomial}: there is a subgroup $\Lambda \subseteq \ZZ$ of
finite index and polynomials $\{P_{\ol m} \in \QQ[n] \mid \ol m =
m + \Lambda \in \ZZ/\Lambda\}$ such that
$$
  P(n) = P_m(n) \quad\text{for}\quad n \equiv m \pmod \Lambda.
$$
\begin{example}
Fix a field~$\kk$.  Prototypical polynomial growth arises as follows.
\begin{enumerate}
\item%
Hilbert polynomial: for $R = \bigoplus_{n\in\NN} R_n$ noetherian standard-graded
over $R_0 = \kk$,
$$
\dim_\kk(R_n)
=
a_d n^d + a_{n-1}n^{d-1} + \cdots + a_0
\in
\QQ[n]
\text{ for } n \gg 0.
$$

\item%
Ehrhart polynomial: for $\cP \subset \QQ^d$ a lattice polytope,
$$
\#(n\cP\cap\ZZ^d)
=
b_d n^d + b_{n-1}n^{d-1} + \cdots + b_0
\in
\QQ[n].
$$

\item%
Snapper polynomial: for any invertible sheaf $\cL$ on a dimension~$d$
projective scheme $X$ over~$\kk$, the Euler characteristic $\chi$ satisfies
$$
\chi(\cL^{\otimes n})
=
c_d n^d + c_{n-1}n^{d-1} + \cdots + c_0
\in
\QQ[n].
$$
\end{enumerate}\setcounter{separated}{\value{enumi}}
These are all honest polynomials, but quasipolynomials result after
\begin{enumerate}\renewcommand\labelenumi{\arabic{enumi}$'$.}
\item%
removing ``standard'', allowing the generators to lie in degrees
greater than~$1$, or
\item%
changing ``lattice'' to ``rational'', allowing arbitrary integer
bounding hyperplanes without requiring the vertices to be integer
points.
\end{enumerate}
\pagebreak[2]
Further instances in commutative algebra abound for an ideal $I$ in
a noetherian ring~$R$:
\begin{enumerate}\setcounter{enumi}{\value{separated}}
\item%
the Hilbert--Samuel function $\length(R/I^n)$ is a polynomial
for $n \gg 0$ if $I$ is~$\mm$-primary for a maximal ideal~$\mm$;

\item%
the number of generators $\mu(I^n)$ is a polynomial for $n \gg 0$;

\item%
$\Ext$, $\Tor$, Betti and Bass numbers all yield
polynomials for $n \gg 0$ \cite{kodiyalam1993}:
\begin{itemize}
\item%
$\length(\Tor_i^R(R/I^n, M))$,
\item%
$\length(\Ext^i_R(M,R/I^n))$,
\item%
$\mu(\Tor_i^R(R/I^n, M))$,
\item%
$\mu(\Ext^i_R(M,R/I^n))$;
\end{itemize}

\item%
the Castelnuovo--Mumford regularity $\reg(R/I^n)$ is linear for $n \gg 0$
when $I$ is homogeneous in a standard-graded ring \cite{kodiyalam2000,
cutkosky-herzog-trung99};

\item%
the $v$-invariant $v_\pp(I^n) = \min\{|\aa| \;\big|\,
R/I^n \supseteq R/\pp \text{ generated in}\,\deg\,\aa\}$ is linear for $n \gg 0$
when $I$ is homogeneous in a standard-graded ring \cite{conca2024};

\item%
the local cohomology length $\length(\HH i\mm{R/I^n})$ is quasipolynomial
for $n \gg 0$ if $I$ is a monomial ideal in a polynomial ring and
$\length(\HH i\mm{R/I^n}) < \infty$ for $n \gg 0$ \cite{dao-montano-2019}.
\end{enumerate}
\end{example}

Classically, this quasipolynomial growth derives from the noetherian
condition on some related construction, often a Rees algebra or
similar.  However, quasipolynomial growth has been observed in
settings that are demonstrably not noetherian.

\begin{example}\label{e:intro}
Let $I_n = \<x^n, y\> \subseteq \kk[x,y]$ be a graded family of ideals
parametrized by $n \in \NN$, meaning that $I_n I_m \subseteq I_{n+m}$.  The Rees
algebra $\bigoplus_{n\in\NN} I_n t^n$ is not noetherian.  However, the numerical
values of classical functors, such as the length
$$
\length\bigl(\Tor_1(\kk[x,y]/I_n, M)\bigr)
$$
for a fixed noetherian $\ZZ^2$-graded $\kk[x,y]$-module~$M$ still exhibit
quasipolynomial growth.  For instance, when
$M = \kk[x,y]/\<x^3y, y^2\>$, the lengths are
$$
2, 4, 6, 7, 8, 9, 10, \ldots.
$$
\end{example}

What kind of finiteness drives the conclusion there?  Our answer, in
the setting of multigraded modules over semigroup rings, melds two
themes from recent advances in disparate areas, namely (i)~Presburger
arithmetic at the intersection of logic, computer science,
combinatorics, and polyhedral geometry on one hand, and (ii)~tame
module theory from applied topology on the other.  The upshot is that
\begin{itemize}
\item%
it makes sense for a $\ZZ^d$-graded module over a semigroup ring
$\kk[Q_+]$ with $Q_+ \subseteq\nolinebreak \ZZ^d$
to be constructible in a Presburger sense, materially weaker than
noetherian;
\item%
constructible families of such modules exhibit
quasipolynomial growth; and
\item%
enough functors preserve constructibility to conclude
\item%
piecewise quasipolynomial growth in surprisingly rich and varied
circumstances.
\end{itemize}
Roughly speaking, after the introduction to Presburger arithmetic in
\S\ref{s:arithmetic}, these considerations respectively occupy
\S\ref{s:constructible-mods} and~\ref{s:families}; \S\ref{s:numerics}
and~\ref{b:quasipolynomiality}; \S\ref{s:flat}, \ref{s:functors},
and~\ref{s:functors-on-families}; and \S\ref{b:from-ideals},
\ref{b:products}, and~\ref{s:applications}.  The rest of this
Introduction goes into more detail about the methods and background,
organized in the same order as the paper is presented.

Presburger arithmetic is the first-order logic of the natural numbers
under addition.  Presburger
introduced it in 1929 \cite{presburger1930} and proved its decidability.
Its modern importance \cite{woods2015} stems from its ability to describe seemingly complicated sets in the free
abelian group~$\ZZ^d$---for example, defined using combinations of lattices, polyhedra, boolean operations, and quantifiers---in terms of simpler
sets, such as  translates of affine semigroups with linearly independent
generating sets (Theorem~\ref{t:presburger}).  The
consequences for algebraic combinatorics include rationality of
generating functions with denominators of the special form familiar to
algebraists from Hilbert series \cite{woods2015}.  And for enumerative
combinatorics, the consequences include ubiquitous quasipolynomial
growth, or more precisely piecewise quasipolynomial growth, meaning
quasipolynomial behavior on each of finitely many polyhedral regions
(Definition~\ref{d:piecewise-quasipolynomial}).


The lattice points that Presburger arithmetic organizes into
\emph{semisimple sets} (Definition~\ref{d:semisimple}) serve as
multigraded basis vectors for input modules: monomial ideals and their
powers, or symbolic powers, or integral closures of powers, and so on.
Coarsening the grading to a standard or other linear grading, these
modules can be thought of as families of vector spaces
whose dimensions grow quasipolynomially because Presburger arithmetic
dicates that counting functions behave that way
(Theorem~\ref{t:num-constructible=>quasipol}).  However, the goal is
to conclude quasipolynomial growth for modules output by functors
applied to the input modules: $\Tor$, $\Ext$, local cohomology, and so
on.  These output modules often do not have lattice points as basis
elements; instead, the lattice points parametrize a family of
finite-dimensional vector spaces that is \emph{constructible}
(Definition~\ref{d:constructible}) in the sense that the vector spaces
vary in Presburger definable ways.  More to the point,
for enumeration, the family of vector spaces is \emph{numerically
constructible} (Definition~\ref{d:numerical-family}) in the sense that
the dimensions of the vector spaces are constant on each of finitely
many Presburger definable subsets that partition the set of parameter
lattice points.  Any such numerically constructible family
automatically exhibits piecewise quasipolynomial growth
(Theorem~\ref{t:num-constructible=>quasipol}).

Numerical constructibility is too weak a condition to hope that it
might persist after applying a homological construction, since vector
spaces can share dimensions without being naturally isomorphic (see
Examples~\ref{e:numerical}, \ref{e:not-presburger-family},
and~\ref{e:semisimple}).  The key, then, is to identify conditions
under which the desired functors preserve not merely numerical
constructibility but module-theoretic constructibility of the input.
Tensoring with localizations is the easiest and, for our purposes,
most elemental example; see \S\ref{s:flat}, which also includes
background and notation
for combinatorics of subsemigroups of~$\ZZ^d$ and their multigraded
modules.

That localization preserves constructibility of modules and morphisms
(Lemma~\ref{l:summable}) translates without much fuss into assertions
that local cohomology, tensor products, and higher $\Tor$ functors
preserve constructibility, as well (Theorems~\ref{t:cech}
and~\ref{t:tor}).  By appealing to Matlis duality
(Definition~\ref{d:matlis}), modules of homomorphisms and higher
$\Ext$ functors also do (Theorem~\ref{t:ext}).

These functorial preservation results are for individual modules.  But
the primary interest is in families of modules.  Classically these
families are indexed by a single integer: a power or symbolic power,
for example.  One of the benefits of constructible module theory is
that it adapts to apply to the parameters of the family as easily as
it applies to the grading group of the individual modules in the
family.  Organizing the members of the family is a \emph{Rees monoid}
(Definition~\ref{d:rees-monoid}): the Presburger lattice-point
analogue of the classical Rees algebra.  A~constructible family is
then
simply a constructible module over the Rees monoid algebra
(Definition~\ref{d:family}).  Note that the noetherian hypothesis is
not
relevant; instead the focus is on whether the monoids---and the modules
graded by them---are constructible in the (equivalent) semisimple or
Presburger senses, for that is what produces piecewise quasipolynomial
behavior (Lemma~\ref{l:unassuming}, given
Theorem~\ref{t:num-constructible=>quasipol}).

What results is a host of families of ideals and related objects,
easily built from operations like powers, colons, saturation, taking
multiplier ideals, integral closures, sums, intersections, or products
that are automatically constructible
(Theorem~\ref{t:ideals-families-strong},
\mbox{Proposition}~\ref{p:ideals-families}, and
Proposition~\ref{p:multiply-ideal-module}).  Strikingly complicated
families emerge from meager building blocks
because the results are recursive: they take constructible families
for input and guarantee that the output remains constructible.  Many
of the proofs proceed directly using Presburger arithmetic, avoiding
the need to demonstrate directly that various sets are
semisimple.
An exception is Proposition~\ref{p:multiply-ideal-module}, to the
effect that the product of a constructible family of ideals with a
constructible family of modules is a (doubly indexed) constructible
family of modules; its proof instead relies on the general homological
theory of modules over posets \cite{hom-alg-poset-mods}, using the
syzygy theorem there \cite[Theorem~6.12]{hom-alg-poset-mods} to reduce
the question to one about Minkowski sums of ideals and arbitrary
upsets.  These varied techniques offer a sample of the flexibility of
constructible module theory,
with its complementary roots in logic and~applied~\mbox{topology}.

The main theoretical results of the paper, which assert functorial
preservation of constructibility on families under local cohomology,
$\Tor$, and $\Ext$ (Theorems~\ref{t:loc-coh-family},
\ref{t:tor-family}, and~\ref{t:ext-family}) are little more than the
join of the corresponding individual module
results~(\S\ref{s:functors}) and the way families are defined
(\S\ref{b:rees-monoids}--\ref{b:constructible-families}) as modules over
Rees monoids.  The payoff arrives in \S\ref{s:applications}, with
specific piecewise quasipolynomial, quasilinear, or quasiconstant
growth statements for length of local cohomology, $a$-invariants,
regularity, and depth (Theorem~\ref{t:loc-coh-final}); length of
$\Tor$ and Betti numbers (Theorem~\ref{t:tor-final}); length of $\Ext$
and Bass numbers (Theorem~\ref{t:ext-final}); associated primes via
$v$-invariants (Theorem~\ref{t:v-invariants}); and extended degrees,
including the usual degree, Hilbert--Samuel multiplicity, arithmetic
degree, and homological degree (Theorem~\ref{t:hilbert-coeffs},
Corollary~\ref{c:hdeg}, and Remark~\ref{r:adeg}).


Comparing our results with
prior literature, the general spirit is: most of the quasi\-polynomial
growth results are known for individual functors applied to powers of
ideals or, with a small modification, to noetherian graded families of
ideals.  Our theory restricts to the multigraded setting, but once
there it generalizes by relaxing the noetherian hypothesis on a family
of ideals indexed by integers to the much more inclusive constructible
hypothesis on a family of modules indexed by integer vectors.
Consequently, since the central conceit is that various functors such
as $\Tor$, $\Ext$, and local cohomology preserve constructibility
(\S\ref{s:functors-on-families}, particularly
Theorems~\ref{t:loc-coh-family}, \ref{t:tor-family},
and~\ref{t:ext-family}), any sequence of such functors outputs
piecewise quasipolynomial numerical behavior given a constructible
input family (\S\ref{b:functorial-quasipol}, particularly
Corollaries~\ref{c:loc-coh-family-quasipol},
\ref{c:tor-family->quasipol}, and \ref{c:ext-family->quasipol}).  Rich
sources of input constructible families abound
(\S\ref{b:from-ideals}--\ref{b:products}) because operations on families
of ideals, such as the formation of powers, integral closures,
multiplier ideals (Theorem~\ref{t:ideals-families-strong}), colon
ideals, saturations, sums, products, and intersections
(Proposition~\ref{p:ideals-families}) preserve constructibility.

The algebraic and homological machinery that drives the theory here
was conceived
for Topological Data Analysis, specifically to
subdivide spaces of parameters in persistent homology into
subanalytic, semialgebraic, or polyhedral constant regions.  Part of
the purpose was to connect the multigraded algebra of multiparameter
persistence \cite{hom-alg-poset-mods, essential-real} to the
sheaf-theoretic side \cite{kashiwara-schapira2018,
kashiwara-schapira2019}, which turns out to be more or less equivalent
to the subanalytic case \cite{strat-conical}.
But few hypotheses were needed to enable the homological theory, so it
was developed under a unifying ``class~$\XX$'' condition analogous to
an $o$-minimal structure.
An integral observation, for the current quasipolynomial purpose, is
that the class~$\XX$ condition has an arithmetic incarnation: instead
of continuously
subdividing subsets of real vector spaces into subanalytic or
semialgebraic pieces,
discretely
subdivide subsets of finitely generated free abelian groups  into
simple pieces.

Certain aspects of semisimple constructibility theory are developed
here in their natural generality, namely for a \emph{class~$\XX$}
family of subsets (Definition~\ref{d:tame}), because~it requires no
additional effort.  However, the reader interested solely in
semisimple (equivalently, by Theorem~\ref{t:presburger}, Presburger
definable) sets and the resulting \mbox{semisimple} constructibility
can always take ``class~$\XX$'' to mean ``semisimple'' or ``semisimply
construct\-ible''.  For this reason, we introduce Presburger
arithmetic and semisimple sets~(\S\ref{s:arithmetic}) before the
general discussion of tameness and class~$\XX$ objects and
\mbox{morphisms}~(\S\ref{s:constructible-mods}).

The arithmetic take on subdividing degrees of multigraded modules is
not entirely new.
It was initiated---in the primitive form of ``sector partitions''
\cite{helm-miller2005}---for algo\-rithmic purposes via monomial matrices
\cite{alexdual2000}.  Treating graded degrees of local cohomology as
flexible geometric subsets of vector spaces has also seen deep
applications to hypergeometric functions
\cite{matusevich-miller-walther2005}.  For the present purpose,
tracing the idea back to its inception
suggests that monomial matrices have the potential to make the current
constructible constructions computable, as well, via the general
techniques surrounding upset and downset resolutions
\cite{hom-alg-poset-mods}.

Perhaps with algorithmic concerns
in mind,
a natural next step would be to
determine bounds on the constituents of various subdivisions that
occur here, including in semisimple subdivisions numerically
subordinate to given modules or families
(Definitions~\ref{d:semisimple-partition}
and~\ref{d:numerically-subordinate}), polyhedral subdivisions that
underlie the ``piecewise'' part of piecewise quasipolynomial
(Definition~\ref{d:piecewise-quasipolynomial}), and coset partitions
that underlie the ``quasi'' part (Definition~\ref{d:quasipolynomial}).
It would also be valuable to ascertain whether methods based on
constructible modules can conclude anything about multigraded
regularity.  The degrees and leading coefficients of the
quasipolynomials in Section~\ref{s:applications} vary in ways that
would be interesting to study.  For example, when the family of
functors is indexed by $n\in\NN$, constancy in the degree and leading
coefficient of one of these quasipolynomials implies existence of the
limit of the corresponding sequence over a power of~$n$, which relates
to results in the literature \cite{cutkosky14}.  It is also of
interest whether any of our results extend to families of modules over
graded rings endowed with coarser gradings.
%

\begin{conv}\label{conv:conventions}
Throughout,
fix an arbitrary field~$\kk$.  Starting in \S\ref{b:localization},
$Q$~denotes a class~$\XX$ group (Definition~\ref{d:class-X}) unless
otherwise stated; see, in particular, the opening of each section.
Before that, in \S\ref{s:arithmetic}--\S\ref{b:complexes}, explicit
hypotheses on~$Q$ are provided
each~time.  Note that
for the family~$\XX$ of all subsets of~$Q$, being a class~$\XX$ group
merely means $Q$ is a full (Definition~\ref{d:pogroup}) partially
ordered subgroup of a finite-dimensional real vector~space.%
\enlargethispage{.85ex}%
\end{conv}

\section{Presburger arithmetic}\label{s:arithmetic}

\subsection{Presburger definability}\label{b:definable}

\begin{defn}\label{d:presburger-formula}
A \emph{Presburger formula} is a boolean formula with variables
in~$\ZZ$ that can be written using quantifiers
($\exists$~and~$\forall$), boolean operations (or, not, and), and
integer affine-linear inequalities in the variables.
\end{defn}

Henceforth we use bold letters, such as~$\uu$, to denote either group
elements or vectors (of numbers or variables).

\begin{defn}\label{d:free-variable}
A variable in a Presburger formula is \emph{free} if it is not
quantified.  A Presburger formula is written $F(\uu)$ to indicate that
$\uu$ is the list of free variables.
\end{defn}

\begin{defn}\label{d:presburger-definable}
A set $S \subseteq \ZZ^d$ is \emph{Presburger definable} if it can be
defined via a Presburger formula~$F(\uu)$, meaning that $S = \{\uu \in
\ZZ^d \mid F(\uu)\}$.
\end{defn}

\begin{remark}\label{r:implication}
In any formula defining a Presburger definable set, we sometimes write
$F(\uu) \Rightarrow G(\uu)$ to denote $\neg F(\uu) \vee G(\uu)$.  The latter notation is
justified as the two formulas $F(\uu) \Rightarrow G(\uu)$ and $\neg F(\uu) \vee G(\uu)$ are
logically equivalent.
\end{remark}

\begin{example}\label{e:presburger-formula}
The positive integers with remainder~$1$ mod~$3$ are defined by the
formula
$$
  F(u) = (\exists c\in \ZZ)(c\gs  0\, \wedge \,u=3c+1).
$$
Note that the variable~$c$ in~$F(u)$ is not free.
\end{example}

\begin{example}\label{e:presburger-formula2}
The formula 
$$
G(u_1,u_2)=\left( 2u_1+u_2\gs 3\, \wedge\, 3u_1-u_2\gs 2\right)
$$
defines the set of integer points in the translated cone $(1,1) +
\RR_{\gs 0}\triangle$, where $\triangle$ is the triangle with vertices
$(0,0), (1,3), (1,-2)$.
\end{example}


\subsection{Decomposing Presburger sets}\label{b:semisimple}

\begin{defn}\label{d:semisimple}
Given a finitely generated free abelian group~$Q$, a subset $S
\subseteq Q$ is \emph{simple} if $S$ is a translate of a subsemigroup
of~$Q$ isomorphic to~$\NN^k$ for some~$k$; that is, $S = \qq + N$ for
some $\qq \in Q$ and $N$ generated by linearly independent elements
of~$Q$.  The subset $S \subseteq Q$ is \emph{semisimple} if it is a
finite disjoint union of simple subsets of~$Q$.
\end{defn}

\begin{defn}\label{d:affine-semigroup}
An \emph{affine semigroup} is a semigroup that is isomorphic to a
finitely generated submonoid of a free abelian group.
\end{defn}

\begin{remark}\label{r:smooth}
A semigroup isomorphic to~$\NN^k$ for some~$k$ is often called a
\emph{smooth} semigroup (or smooth affine semigroup) because
smoothness is equivalent to the semigroup algebra being the coordinate
ring of a smooth affine toric~variety.  A~translate of an affine
semigroup has been called a \emph{linear} set in the literature, with
a finite union of such sets being called \emph{semilinear}
\cite{eilenberg-schutzenberger1969}.  The terminology in
Definition~\ref{d:semisimple} comes from
\cite{eilenberg-schutzenberger1969} and persists in modern
developments \cite{d'alessandro-intrigila-varricchio2012}. 
\end{remark}

The following characterizations of Presburger definable sets
summarizes well known results \cite{eilenberg-schutzenberger1969}; see
\cite{d'alessandro-intrigila-varricchio2012} for a recent viewpoint.
Part of a proof is included here because that part is short and may
provide insight to readers from commutative algebra.

\begin{thm}\label{t:presburger}
For a subset $S \subseteq \ZZ^d$, the following are equivalent.
\begin{enumerate}
\item%
$S$ is Presburger definable.

\item%
$S$ is a finite union of sets of the form $P \cap (\qq + Q)$, where $P
\subseteq \RR^d$ is a rational polyhedron, $\qq \in \ZZ^d$, and $Q$ is
a subgroup of $\ZZ^d$.

\item%
$S$ is a finite union of translates of affine semigroups.

\item%
$S$ is semisimple.
\end{enumerate}
In particular, given the final item, any of the unions here can be
chosen to be disjoint.
\end{thm}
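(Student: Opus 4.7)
The plan is to prove the cycle $(4) \Rightarrow (3) \Rightarrow (1) \Rightarrow (2) \Rightarrow (4)$, with the disjointness claim at the end of the statement following from the constructive nature of the argument establishing~$(4)$.

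The first two implications are routine. A simple set is by definition a translate of a subsemigroup of $\ZZ^d$ isomorphic to $\NN^k$, hence in particular a translate of an affine semigroup, giving $(4) \Rightarrow (3)$. Each translate $\qq + \NN\aa_1 + \cdots + \NN\aa_m$ is cut out by the Presburger formula
\[
(\exists c_1,\ldots,c_m \in \ZZ)\Bigl((c_1 \gs 0) \wedge \cdots \wedge (c_m \gs 0) \wedge \uu = \qq + c_1\aa_1 + \cdots + c_m\aa_m\Bigr),
\]
and finite unions of Presburger definable sets are Presburger definable by disjunction, giving $(3) \Rightarrow (1)$.

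For $(1) \Rightarrow (2)$, I would appeal to quantifier elimination for the Presburger theory, working in the language of ordered abelian groups enriched by a congruence-modulo-$k$ predicate for each positive integer~$k$. Putting the resulting quantifier-free formula in disjunctive normal form exhibits the defined set as a finite union of intersections of a rational polyhedron (defined by finitely many linear inequalities and their negations) with a set carved out by a conjunction of congruence conditions and their negations. The latter is a finite union of cosets $\qq + Q$ of subgroups $Q \subseteq \ZZ^d$, and distributing the intersection with the polyhedron over this finite union yields the required presentation.

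The principal obstacle is $(2) \Rightarrow (4)$. Given $S = P \cap (\qq + Q)$, I would triangulate $P$ into finitely many rational simplicial pieces (writing $P$ as a Minkowski sum of a bounded rational polytope and its recession cone, triangulating each, and matching along shared faces), then pass to a half-open refinement so that the pieces are pairwise disjoint. For each half-open simplicial piece, the integer points in $\qq + Q$ form a finite disjoint union of translates of a semigroup $\NN\bb_1 + \cdots + \NN\bb_k$ with linearly independent~$\bb_i$; this is the classical fundamental-parallelepiped enumeration, adapted to the sublattice~$Q$ by replacing the rational ray generators of each simplicial piece with sufficiently divisible integer multiples lying in~$Q$. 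Aggregating over the pieces yields the required semisimple decomposition, which is disjoint by construction and thereby establishes the final disjointness assertion of the theorem.
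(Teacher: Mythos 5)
Your implications $(4)\Rightarrow(3)$, $(3)\Rightarrow(1)$, and $(1)\Rightarrow(2)$ are sound (the last is a correct appeal to Presburger quantifier elimination, which in spirit is what underlies the paper's citation of \cite[Theorem~1.15]{woods2015}). The trouble is concentrated in $(2)\Rightarrow(4)$, which is also where the real content of the theorem lives, and it is the only implication the paper argues directly (as $3\iff4$; it cites Schrijver for $2\iff3$).

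The first problem is your triangulation of $P$. Writing $P = V + C$ with $V$ a polytope and $C$ the recession cone, triangulating each separately and ``matching along shared faces'' does not produce a triangulation of~$P$: Minkowski sums of the resulting pieces overlap on sets of full dimension, not merely along faces. Concretely, take $V = [0,1]^2$ triangulated into $T_1 = \operatorname{conv}\bigl((0,0),(1,0),(1,1)\bigr)$ and $T_2 = \operatorname{conv}\bigl((0,0),(0,1),(1,1)\bigr)$, and $C = \RR_{\gs 0}(1,1)$. Then $(3/2,\,3/2)$ lies in both $T_1 + C$ and $T_2 + C$, so the pieces are not internally disjoint. A correct triangulation of an unbounded polyhedron has cells that are generalized simplices, i.e.\ Minkowski sums $\Delta + \sigma$ of a simplex and a pointed simplicial cone chosen compatibly from the vertices and extreme rays of~$P$, and constructing such a triangulation is not the same as triangulating $V$ and $C$ independently.

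The second problem is that even with a correct generalized-simplicial cell, the ``classical fundamental-parallelepiped enumeration'' you invoke is stated for \emph{cones} and does not transfer verbatim to $\Delta^\circ + \sigma^\circ$. Writing $\sigma^\circ = \Pi + (\NN\bb_1 + \cdots + \NN\bb_k)$ for a half-open parallelepiped $\Pi$, the shifts $(\Delta^\circ + \Pi) + \sum n_i\bb_i$ are not pairwise disjoint: already in $\RR^1$, with $\Delta^\circ = (0,1)$, $\bb_1 = 1$, and $\Pi = [0,1)$, the sets $(0,2)$ and $(1,3)$ overlap. You also speak of ``replacing the rational ray generators of each simplicial piece,'' which makes no sense for the simplex factor. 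The decomposition you want does exist, but it needs a genuinely different argument; this is exactly why the paper routes through item~(3), reducing first to translates of affine semigroups (Schrijver), then to \emph{normal} affine semigroups, and only then unimodularly triangulating the associated \emph{cone}, where lattice points in relatively open faces really are single translates of $\NN^k$. Since your disjointness assertion at the end is claimed to follow ``from the constructive nature of the argument establishing~$(4)$,'' it inherits the same gap.
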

\begin{proof}
1 $\iff$ 2 by \cite[Theorem 1.15]{woods2015}.

2 $\iff$ 3 by \cite[p.234~(19)]{schrijver1986}.

3 $\iff$ 4 uses three reductions.
\begin{itemize}
\item%
Every affine semigroup is a disjoint union of translates of normal
affine semigroups, where an affine semigroup $A$ is \emph{normal} if
$A = \RR_{\gs 0} A \cap \ZZ A$.

\item%
Every rational cone admits a unimodular triangulation: a simplicial
subdivision in which each cone is generated by a basis for the ambient
lattice.

\item%
Any unimodular triangulation of a cone~$C$ induces an expression of
the lattice points in~$C$ as a finite disjoint union of translates of
semigroups each isomorphic to~$\NN^k$ for some~$k$.
\end{itemize}
The first reduction follows from \cite[Theorem~5.2]{stanley1982}; see
also \cite[Lemma~2.2]{affine-strat} for a short proof based on primary
decomposition in commutative algebra.  The second reduction is a
special case of \cite[Theorem~2.72]{bruns-gubeladze2009}.
For the third reduction, the translated semigroup associated to each
face of the triangulation is the set of lattice points in the relative
interior of that face; the relevant semigroup is isomorphic to $\NN^k$
because each face is unimodular.
\end{proof}

\begin{remark}\label{r:unimodular}
Our proof of Theorem~\ref{t:presburger} can work with an
arbitrary---that is, not necessarily unimodular---triangulation: add
one simple subset to each lattice point in the fundamental
parallelepiped of each simplicial face in the triangulation.
\end{remark}

\subsection{Semisimple atoms}\label{b:atoms}

\begin{prop}\label{p:atoms-definable}
Fix a submonoid~$A$ with trivial unit group in a free abelian
group~$Q$.  Then $A$ is semisimple if and only if the set of atoms
of~$A$ is semisimple in~$Q$.
If $A$ is semisimple and $U \subseteq A$ is an upset (so $\aa \in U$ and $\aa \preceq \aa'
\implies \aa' \in U$), then $U$ is semisimple if and only if the minimal
generating set of~$U$ is semisimple.
\end{prop}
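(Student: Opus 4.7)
The plan is to prove both biconditionals by using Theorem~\ref{t:presburger} to replace ``semisimple'' with ``Presburger definable'' and working entirely with Presburger formulas, exploiting the closure of Presburger sets under Boolean operations, quantifiers, Minkowski sums, and monoid generation.

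I would begin with the second statement, since it reduces to direct formula manipulation. For the forward direction, assuming $A$ and $U$ are both Presburger, the minimal generating set of the upset $U$ is the Presburger-definable set
\begin{equation*}
G = \{\uu \in U : \forall \aa \in A,\ \aa \neq 0 \Rightarrow \uu - \aa \notin U\},
\end{equation*}
which is Presburger, hence semisimple. For the reverse direction, the upset property of~$U$ yields the identity $U = G + A$, expressing $U$ as a Minkowski sum of two Presburger sets, which is Presburger by Theorem~\ref{t:presburger}.

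For the first statement, the forward direction is entirely analogous: the atoms of~$A$ form the Presburger-definable subset of nonzero elements of~$A$ that admit no nontrivial additive decomposition within~$A$. The reverse direction carries the substance of the proposition. Given the atoms $T$ Presburger, the submonoid $\langle T \rangle \subseteq Q$ they generate is Presburger by the classical closure of Presburger (semilinear) sets under semigroup generation. The argument is then completed by showing that the non-atomic ``limit'' elements $A \setminus \langle T \rangle$ also form a Presburger set: using trivial units, each such limit element $\aa$ is infinitely divisible by some atom $\bb \in T$, meaning $\aa + n\bb \in A$ for all $n \in \ZZ$, so the limit set decomposes into full cosets of the subgroup $\ZZ T \subseteq Q$ generated by the atoms, indexed by a Presburger subset of the quotient $Q/\ZZ T$.

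The main obstacle is precisely this reverse direction for the atoms statement: $A$ can properly contain $\langle T \rangle$, so a pure ``apply the Presburger formula and take closure'' argument is insufficient. A concrete illustration is a submonoid $A \subseteq \ZZ^2$ whose only atom is $(0,1)$ yet which contains every $(k,m)$ with $k \geq 1$ and $m \in \ZZ$---none of these are finite sums of atoms, yet each lies in $A$ because subtracting $(0,1)$ keeps the result in $A$ indefinitely. The trivial-units hypothesis is what channels such limit elements into finitely many full cosets of a lattice subgroup of~$Q$, and verifying that the corresponding index set in the quotient $Q/\ZZ T$ remains Presburger is the delicate step where the structural assumptions must genuinely be used rather than only generic Presburger closure.
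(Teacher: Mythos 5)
Your forward directions and the upset forward direction are correct and essentially match the paper's formula manipulations. The reverse direction for atoms is where the substance lies, and you correctly flag that the naive formula --- in fact precisely the one the paper's proof writes down --- defines only $\langle T\rangle$, the submonoid generated by the atoms, and that $\langle T\rangle\subsetneq A$ can happen; your $\ZZ^2$ example is a valid witness.

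However, your repair does not close the gap. You assert the limit elements form a union of full cosets of $\ZZ T$ indexed by a Presburger subset of $Q/\ZZ T$, but nothing forces that index set to be Presburger, and it need not be. Lift your example one dimension: take $S=\{(k,m)\in\NN^2 : m\le\sqrt2\,k\}\setminus\{\0\}$, the non-semisimple monoid of Example~\ref{e:non-fg'}, and set
$$
A \;=\; \{(0,0,n) : n\in\NN\}\;\cup\;\{(k,m,n) : (k,m)\in S,\ n\in\ZZ\}\;\subseteq\;\ZZ^3.
$$
This is a submonoid with trivial unit group whose only atom is $(0,0,1)$, since every $(k,m,n)$ with $(k,m)\in S$ decomposes as $(0,0,1)+(k,m,n-1)$. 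The atom set is therefore a semisimple singleton, yet $A$ is not semisimple: its projection to $\ZZ^2$ is $\{\0\}\cup S$. The index set you would need in $Q/\ZZ T\cong\ZZ^2$ is exactly $S$, which is not Presburger. The same DCC issue undermines your upset reverse direction: $U=G+A$ fails when $U$ has elements above no minimal element (take $U=\{(k,m):k\ge1\}$ in your own $\ZZ^2$ example, where $G=\emptyset\ne U$). What rescues both your argument and the paper's formula is an atomicity hypothesis --- every nonzero element of $A$ is a finite sum of atoms, equivalently $(A,\preceq)$ satisfies DCC --- under which $A=\langle T\rangle$ and the formula argument applies. That hypothesis holds in the paper's applications (Rees monoids $G_+\subseteq\NN^k\times Q_+$ over pointed affine $Q_+$ are positively graded), but it does not follow from trivial unit group alone, so your "delicate step" cannot be completed as you describe.
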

\begin{proof}
Fix an isomorphism $Q \simto \ZZ^d$, and let $\hat A$ be the image
of~$A$.  Assume $A$ is semisimple, so by Theorem~\ref{t:presburger}
its image $\hat A$ is definable by a Presburger formula $F(\uu) = F(u_1,
\dots, u_d)$.
An atom of~$\hat A$ is precisely an element that is not the sum of two
nonidentity elements of~$\hat A$.  The set of atoms of~$\hat A$ is
thus Presburger via the formula
\begin{align}
\tag{$*$}\label{eq:*}
H(\uu)
=
\neg(\exists \vv, \rr \in \ZZ^d) \big(\vv \neq \0 \wedge \rr \neq \0
\wedge F(\vv) \wedge F(\rr) \wedge \uu = \vv + \rr\big).
\end{align}
By Theorem~\ref{t:presburger} again, the set of atoms in~$A$
itself is semisimple.

Conversely, let $S \subset \ZZ^d$ be the set of atoms of $\hat A$, and
write $S = \bigcup_{i=1}^r (\qq_i + N_i)$ with $\qq_i\in \ZZ^d$ and
$N_i \subseteq \ZZ^d$ affine semigroups as in
Theorem~\ref{t:presburger}.  Let $F_i$ be a Presburger formula that
defines membership in $N_i$.  Thus $\hat A$ is Presburger definable by
$$
F(\uu)
=
(\exists n_1, \dots, n_r \in \NN, \exists \aa_1, \dots, \aa_r \in \ZZ^d)
\Big(F_1(\aa_1) \wedge \cdots \wedge F_r(\aa_r)
\wedge \uu = \sum_{i=1}^r (n_i\qq_i + \aa_i)
\Big).
$$
By Theorem~\ref{t:presburger} we conclude that $\hat A$, and then $A$,
is semisimple.

The proof of the upset claim is similar, using formulas for semisimple
subsets of the image $\hat U$ of~$U$ in~$\ZZ^d$ where needed, such as
instead of~$F(\rr)$ in~\eqref{eq:*} or instead of some of the~$F_i$ in
the subsequent display.
\end{proof}


\begin{example}\label{e:non-fg}
A submonoid $A \subseteq \NN^d$ can be infinitely generated but still
semisimple.\vspace{-.8ex}
\end{example}
\begin{wrapfigure}{L}{0.175\textwidth}
\vspace{-2.5ex}
\quad
  \includegraphics[height=8ex]{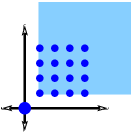}
\vspace{-5ex}
\end{wrapfigure}
\noindent
For instance, $\NN^2$ with its axes removed and its origin put back in
is not finitely generated but is semisimple.
\vspace{2ex}

\begin{example}\label{e:non-fg'}
Failure to be semisimple arises when the genesis of infinite
generation is irrational, such as the submonoid of $\NN^2$ on or above
the $x$-axis and below the line~$y =\nolinebreak \sqrt2 x$, or
nonlinear, such as the submonoid of $\NN^2$ on or above the $x$-axis
and below the parabola~$y\hspace{-.4pt} =\nolinebreak x^2$.
\end{example}

\section{Numerics of constructibility}\label{s:numerics}

This section lays out what it means for a family of vector spaces to
behave, when counting dimensions, in a semisimple (equivalently, by
Theorem~\ref{t:presburger}, Presburger definable) manner.  The notion
of \emph{numerically constructible} family
(Definition~\ref{d:numerical-family}) is only a shadow of the
module-theoretic constructibility in \S\ref{s:families}, but it
encapsulates the data that extract quasipolynomial behavior
(Theorem~\ref{t:num-constructible=>quasipol}) from constructibility.

\subsection{Numerical constructibility}\label{b:numerical}

\begin{defn}\label{d:semisimple-partition}
A \emph{semisimple subdivision} of a semisimple set $S$
(Definition~\ref{d:semisimple} and Theorem~\ref{t:presburger})
in a free abelian group~$Q$ is an expression of~$S$ as a finite
disjoint union
$$
  S = \Bigcupdot_{\alpha \in A} S_\alpha
$$
in which each block~$S_\alpha$ of the partition is semisimple in~$Q$.
\end{defn}

\begin{defn}\label{d:numerically-subordinate}
Let $V = \bigoplus_{\ggg \in G} V_\ggg$ be a direct sum of $\kk$-vector spaces
indexed by a free abelian group~$G$. 
A semisimple subdivision of~$G$ is \emph{numerically subordinate}
to~$V$ if $\ggg \mapsto \dim_\kk V_\ggg$ is constant on the regions of
the subdivision.
\end{defn}

\begin{defn}\label{d:numerical-family}
Let $Z$ be a free abelian group and $\{V_\nn\}_{\nn \in Z}$ be a
family of $\kk$-vector spaces each of which is graded by~$Q$, so
$V_\nn = \bigoplus_{\qq \in Q} V_{\nn\qq}$, where $V_{\nn\qq} =
(V_\nn)_\qq$.  The family $\{V_\nn\}_{\nn \in Z}$ over~$Q$ is
\emph{numerically constructible} if $\dim_\kk V_{\nn\qq} < \infty$ for
all~$\nn\qq \in\nolinebreak G$~and $G =\nolinebreak Z \times Q$ admits a
semisimple subdivision numerically subordinate to $V = \bigoplus_{\nn\qq
\in G} \hspace{-1pt}V_{\nn\qq}$.
\end{defn}

\begin{remark}\label{r:other-class-X}
To generalize this theory to another class~$\XX$ beyond semisimple,
Definition~\ref{d:numerical-family} would need to explicitly require
each slice with~$\nn$ fixed to be class~$\XX$.  In contrast, the
intersection of a semisimple set with $\{\nn\} \times Q$ is automatically
semisimple.
\end{remark}

\begin{remark}\label{r:zero-part-of-family}
Often the vector spaces $V_\nn$ are $Q$-modules that are nonzero only
for $\nn$ lying in a submonoid of~$Z$ that is semisimple as a subset.
For example, $Z$ could be~$\ZZ^k$ with the vector spaces~$V_\nn$ only
nonzero for $\nn \in \NN^k$.  Graded families of ideals (see
Definition~\ref{d:graded-family-of-ideals}), for instance, have $Z =
\ZZ$ and are only nonzero for $n \in \NN \subseteq \ZZ$.
\end{remark}

\begin{example}\label{e:numerical}
Fix $Q = \ZZ^2$ with positive cone $Q_+ = \NN^2$ and $G = \ZZ \times
Q$.  Let $I \subset \kk[x,y]$ be a monomial ideal and set
$$
M_n
=
\begin{cases}
  I \oplus \kk[x,y]/I &\text{if } n \text{ is even}
  \\
  \quad\ \ \,\kk[x,y] &\text{if } n \text{ is odd.}
\end{cases}
$$
In addition, set $M_n = 0$ for $n < 0$.  The family
$\{M_n\}_{n \in \ZZ}$ is numerically constructible because it is
subordinate to the subdivision of~$G$ with two constant regions:
\begin{itemize}
\item%
the nonnegative octant $\NN \times Q_+ \cong \NN^3$, where the ``Hilbert
functions'' $\ggg \mapsto \dim_\kk V_\ggg$ always take the value~$1$; and

\item%
elsewhere, where the functions $\ggg \mapsto \dim_\kk V_\ggg$ always take the
value~$0$.
\end{itemize}
\end{example}


\subsection{Quasipolynomials from numerical constructibility}\label{b:num-const}\mbox{}

\medskip
\noindent
The following definition is standard; see \cite[Definitions~1.8
and~1.9]{woods2015}, for example.

\begin{defn}\label{d:quasipolynomial}
A function $Q: Z \to \QQ$ on a free abelian group~$Z$ of rank~$k$ is a
\emph{quasipolynomial} if there exist a rank~$k$ sublattice $\Lambda \subseteq Z$
and polynomials $P_{\ol \mmm} \in \QQ[x_1, \ldots, x_k]$, one for each coset $\ol
\mmm \in Z/\Lambda$, such that $Q(\nn) = P_{\ol \mmm}(\nn)$ for every $\nn \in \ol \mmm$.
\end{defn}

\begin{defn}\label{d:piecewise-quasipolynomial}
Fix a subset $T \subseteq Z$ in a free abelian group~$Z$.  A function $g: T \to
\QQ$ is \emph{piecewise quasipolynomial} if there exist
\begin{itemize}
\item%
a finite set~$A$,

\item%
a partition $Z = \bigcup_{\alpha \in A} (\Gamma_\alpha \cap Z)$
with each $\Gamma_\alpha$ a rational convex polyhedron,~and
\item%
quasipolynomials $Q_\alpha: Z \to \QQ$ for $\alpha \in A$
\end{itemize}
such that $g(\nn) = Q_\alpha(\nn)$ for every $\nn \in \Gamma_\alpha \cap T$.  If the polynomials
defining the quasi\-polynomials $Q_\alpha$ are all linear, then $Q$ is
\emph{piecewise quasilinear}.
\end{defn}

\begin{remark}\label{r:quasi-pol-k=1}
In Definition~\ref{d:piecewise-quasipolynomial}, if $Z$ has rank~$1$,
then $g$ is piecewise quasipolynomial $\iff g(n)$ coincides with a
quasipolynomial for $n \gg 0$ and a quasipolynomial for $n \ll 0$.
\end{remark}

\begin{defn}\label{d:length}
The \emph{length} of a direct sum of $\kk$-vector spaces $V = \bigoplus_{\qq \in Q}
V_\qq$ is $\length(V) = \sum_{\qq \in Q} \dim_\kk V_\qq$.
\end{defn}

\begin{thm}\label{t:num-constructible=>quasipol}
Fix a numerically constructible family $\{V_\nn\}_{\nn \in Z}$ 
over~$Q \cong \ZZ^d$.
\begin{enumerate}
\item\label{i:length-quasipol}%
The set $T = \{\nn \in Z \mid \length(V_\nn) < \infty\}$ is semisimple and the
function $\length_V: T \to \NN$ given by $\length_V(\nn) = \length(V_\nn)$ is
piecewise quasipolynomial of degree at most~$d$.
\item\label{i:maxmin-quasipol}%
For any linear function $\lambda: Q \to \ZZ$ the sets 
\begin{align*}
  \Top_\lambda V
  &=
  \bigl\{\nn \in Z \mid V_\nn \neq 0 \text{ and } \sup\{\<\lambda,\qq\> \mid V_{\nn\qq} \neq 0\} < \infty \bigr\}
\\\text{and}\quad
  \soc_\lambda V
  &=
  \bigl\{\nn \in Z \mid V_\nn \neq 0 \text{ and } \inf\{\<\lambda,\qq\> \mid V_{\nn\qq} \neq 0\} > -\infty \bigr\}
\end{align*}
are semisimple, and the functions
$$
\begin{array}{r@{\ }l@{\quad}r@{\ }l@{}}
  \max_{\lambda,V}: \Top_\lambda V &\to \ZZ
  &\text{and}\quad
  \min_{\lambda,V}: \soc_\lambda V &\to \ZZ
\\
                     \nn &\mapsto \dis\max_{V_{\nn\qq} \neq 0} \<\lambda,\qq\>
  &
                     \nn &\mapsto \dis\min_{V_{\nn\qq} \neq 0} \<\lambda,\qq\>
\end{array}
$$
are piecewise quasilinear.
\end{enumerate}
\end{thm}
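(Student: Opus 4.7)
The plan is to fix a semisimple subdivision $G = \bigcupdot_{\alpha \in A} S_\alpha$ numerically subordinate to~$V$, with each $S_\alpha = \ggg_\alpha + \NN\vv_{\alpha,1} + \cdots + \NN\vv_{\alpha,k_\alpha}$ simple (linearly independent generators in $Z \times Q$) and $\dim_\kk V_\ggg = c_\alpha$ on~$S_\alpha$. Writing $S_\alpha(\nn) = \{\qq \in Q \mid (\nn,\qq) \in S_\alpha\}$ for the fiber over $\nn \in Z$ yields $\length(V_\nn) = \sum_\alpha c_\alpha\,\#S_\alpha(\nn)$ and $\{\qq \mid V_{\nn\qq} \neq 0\} = \bigcup_{c_\alpha > 0} S_\alpha(\nn)$.

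For part~\ref{i:length-quasipol}, the first step is to encode $T$ as a Presburger definable set: since a Presburger set is finite iff bounded, $\#S_\alpha(\nn) < \infty$ is expressed by $(\exists N \in \NN)(\forall \qq \in Q)\bigl((\nn,\qq) \in S_\alpha \Rightarrow |\qq|_\infty \ls N\bigr)$, with membership in $S_\alpha$ Presburger by Theorem~\ref{t:presburger}, so $T = \bigcap_{c_\alpha > 0}\{\nn \mid \#S_\alpha(\nn) < \infty\}$ is Presburger and hence semisimple. Next invoke the standard parametric Presburger counting theorem (e.g.\ \cite[Theorem~2.7]{woods2015}) to conclude that $\nn \mapsto \#S_\alpha(\nn)$ is piecewise quasipolynomial on its domain of finiteness, so the nonnegative combination $\length_V = \sum_\alpha c_\alpha\,\#S_\alpha$ is piecewise quasipolynomial on~$T$. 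For the degree bound~$\ls d$, parametrize $S_\alpha$ by $(n_1,\dots,n_{k_\alpha}) \in \NN^{k_\alpha}$ and write $\vv_{\alpha,i} = (\zz_{\alpha,i},\qq_{\alpha,i}) \in Z \times Q$; the fiber over~$\nn$ is the set of nonnegative integer solutions to $\sum_i n_i\zz_{\alpha,i} = \nn - (\text{$Z$-part of }\ggg_\alpha)$, whose cardinality is quasipolynomial in~$\nn$ of degree at most $k_\alpha - \rank_\RR\{\zz_{\alpha,i}\}$, and linear independence of the $\vv_{\alpha,i}$ in $Z \times Q$ forces this to be $\ls d$ (the kernel of $\RR^{k_\alpha} \to Z\otimes\RR$ injects via the~$\vv_{\alpha,i}$ into $\{0\} \times (Q\otimes\RR)$).

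For part~\ref{i:maxmin-quasipol}, note that $\sup\{\<\lambda,\qq\>\mid V_{\nn\qq} \neq 0\} = \max_{c_\alpha > 0} \sup\{\<\lambda,\qq\>\mid \qq\in S_\alpha(\nn)\}$ and that both $V_\nn \neq 0$ (that is, $\nn$ is in the $Z$-projection of some $S_\alpha$ with $c_\alpha > 0$) and finiteness of each inner $\sup$ are Presburger conditions, so $\Top_\lambda V$ and, symmetrically, $\soc_\lambda V$ are semisimple. On each piece the inner $\sup$ equals $\<\lambda,\text{$Q$-part of }\ggg_\alpha\> + \max\bigl\{\sum_i n_i\<\lambda,\qq_{\alpha,i}\> \bigm| (n_i)\in\NN^{k_\alpha},\ \sum_i n_i\zz_{\alpha,i} = \nn - (\text{$Z$-part of }\ggg_\alpha)\bigr\}$---a parametric integer LP whose right-hand side depends linearly on~$\nn$---and the optimal value of such an LP is piecewise quasilinear (standard in parametric integer programming: the graph is Presburger and pointwise bounded by a piecewise linear function). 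Taking $\max$ over the finitely many~$\alpha$ preserves piecewise quasilinearity, and the analogous argument handles $\min_{\lambda,V}$.

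The main obstacle I expect is the clean invocation or derivation of the two parametric Presburger ingredients---piecewise quasipolynomiality of Presburger counting functions with a tight degree bound, and piecewise quasilinearity of optimal values of parametric integer LPs whose right-hand sides depend linearly on the parameter. Both are staples of the field, but extracting them in the precise form needed (in particular the degree bound $\ls d$ and a uniform polyhedral partition across all pieces) will require some careful bookkeeping.
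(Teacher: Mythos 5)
Your proposal is correct and follows the same overall strategy as the paper: fix a finite semisimple subdivision numerically subordinate to~$V$, reduce length to a nonnegative combination of Presburger counting functions $\nn \mapsto \#S_\alpha(\nn)$, and establish semisimplicity of the index sets via Presburger formulas. Two points differ in packaging. First, for the degree bound in part~\ref{i:length-quasipol}, the paper simply observes that there are $d$ counted variables (the coordinates of~$\qq$) and cites the proof of \cite[Theorem~1.10]{woods2015}, whereas you give a more explicit linear-algebraic argument parametrizing each simple piece by $\NN^{k_\alpha}$ and observing that the kernel of $(n_i)\mapsto\sum_i n_i\zz_{\alpha,i}$ injects, via the linearly independent generators $\vv_{\alpha,i}$, into $\{\0\}\times(Q\otimes\RR)$, forcing its dimension to be at most~$d$; this is sound and arguably clearer, essentially reducing to the vector-partition-function bound of~\cite{sturmfels1995} (which the paper also cites). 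Second, for part~\ref{i:maxmin-quasipol}, the paper proves quasilinearity of $\max_{\lambda,V}$ directly by writing it as a difference of two Presburger counting functions in a single counted interval variable~$n$ (formulas $U$ and~$L$ counting $\{n : 1\ls n\ls \max\}$ and $\{n : \max\ls n\ls 0\}$), whereas you appeal to the ``standard'' quasilinearity of optimal values of parametric integer programs with right-hand side affine in~$\nn$ and then take a pointwise maximum over the finitely many~$\alpha$. Your parenthetical justification (``the graph is Presburger and pointwise bounded'') is exactly the ingredient that the paper's $U$/$L$ trick makes precise, so the two arguments are morally equivalent; you would still need to spell out that the class of piecewise quasilinear functions is closed under finite pointwise maxima (refine by the Presburger comparison conditions $f_\alpha\gs f_\beta$), but this is routine. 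You correctly flag the two parametric Presburger ingredients as the main bookkeeping burden; both are handled in the paper by citing~\cite[Theorem~1.10]{woods2015} (not Theorem~2.7, a minor citation discrepancy) and by the interval-counting reduction.
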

\begin{proof}
Let $G = Z \times Q$ and $G = \bigcup_{\alpha \in A} S_\alpha$ be a (finite) semisimple
subdivision numerically subordinate to $V = \bigoplus_{\nn\qq \in G} M_{\nn\qq}$.  For
each $\alpha \in A$ let $v_\alpha$ be the common value of $\dim_\kk V_g$ for $g \in
S_\alpha$.  Let $A_0 \subseteq A$ be the set of indices $\alpha$ such that $v_\alpha\neq 0$.
For the purpose of writing Presburger formulas in~$Q$, fix an
isomorphism $Q \cong \ZZ^d$.

\ref{i:length-quasipol}.  By Theorem~\ref{t:presburger} the sets $S_\alpha$
are defined by Presburger formulas~$F_\alpha(\nn,\qq)$.  The set $T$ is
semisimple because it is defined by the formula (see
Remark~\ref{r:implication} regarding~``$\implies$'')
$$
  F(\nn)
  = (\exists N \in \NN)
    \bigl(\bigvee_{\alpha \in A_0} F_\alpha(\nn,\qq) \implies \|\qq\|_\infty < N\bigr).
$$
For $\nn \in T$,
$$
  \length_V(\nn)
  = \sum_{\alpha \in A_0} \sum_{\qq \in S_\alpha \cap Q_\nn} \dim_\kk V_{\nn\qq}
  = \sum_{\alpha \in A_0} \#
      \bigl\{\qq \in \ZZ^d \mid F(\nn) \wedge F_\alpha(\nn,\qq)\bigr\} v_\alpha,
$$
where $Q_\nn = \{\nn\} \times Q$.  For piecewise quasipolynomiality, use
\cite[Theorem~1.10]{woods2015}.  The degree bound follows from the
proof of the latter theorem as there are $d$ ``counted variables'' in
our counting functions, namely the coordinates of~$\qq$ (see also
\cite{sturmfels1995}).

\ref{i:maxmin-quasipol}.  Fix $\lambda: Q \to \ZZ$.  The conclusions for
$\max_{\lambda,V}$ and $\min_{\lambda,V}$ are similar, so we only write out the
case of~$\max_{\lambda,V}$.  Consider the Presburger formula
$$
  H(\nn,w)
  = (\exists \qq\in \ZZ^d)
    \Bigl(
      \bigl(\bigvee_{\alpha \in A_0}\!\! F_\alpha(\nn,\qq)\bigr)
      \wedge
      \bigl(w = \<\lambda,\qq\>\bigr)
    \Bigr).
$$
The set $\Top_\lambda V$ is defined by the Presburger formula
$$
  J(\nn)
  =
  (\exists w \in \ZZ) H(\nn,w) \wedge (\exists N \in \NN,\,\forall w \in \ZZ) \bigl(H(\nn,w) \implies w < N\bigr).
$$
Consider the ``upper'' formula
$$
  U(\nn,n)
  =
  (\exists w \in \ZZ) \bigl(J(\nn) \wedge H(\nn,w) \wedge (1 \ls n \ls w)\bigr)
$$
and the ``lower'' formula
$$
  L(\nn,n)
  =
  (\forall w \in \ZZ)\bigl(w > 0 \implies \neg H(\nn,w)\bigr)
  \wedge
  (\exists w \in \ZZ)\bigl(J(\nn) \wedge H(\nn,w) \wedge (w \ls n \ls 0)\bigr).
$$
For $\nn \in \Top_\lambda V$, if $\max_{V_{\nn\qq} \neq 0} \<\lambda,\qq\>$ is positive, then the
expression $\#\{n \mid U(\nn,n)\}$ evaluates to $\max_{V_{\nn\qq} \neq 0} \<\lambda,\qq\>$,
and it evaluates to zero otherwise.  On the other hand, $\#\{n \mid
L(\nn,n)\}$ evaluates to zero if $\max_{V_{\nn\qq} \neq 0} \<\lambda,\qq\>$ is positive,
and it evaluates to $|\max_{V_{\nn\qq} \neq 0} \<\lambda,\qq\>| + 1$ otherwise.  Thus,
for $\nn \in P$,
$$
  {\textstyle\max_{\lambda,V}(\nn)}
  =
  \#\{n \mid U(\nn,n)\} - \#\{n \mid L(\nn,n)\} + 1.
$$
Hence $\max_{\lambda,V}$ is piecewise quasipolynomial by
\cite[Theorem~1.10]{woods2015}; the quasipolynomials involved are in
fact quasilinear as $n$ is the sole counted variable.
\end{proof}

\begin{remark}\label{r:truncate}
Following the notation in Theorem~\ref{t:num-constructible=>quasipol}, let
$S \subset G = Z \times Q$ be a semisimple set.  The truncated family
$\{V_\nn|_S\}_{\nn \in Z}$, where $V_\nn|_S = \bigoplus_{\nn\qq \in S} V_{\nn\qq}$, is also
numerically constructible.  The latter is a \emph{linear truncation}
of the family $\{V_\nn\}_{\nn \in Z}$.  Linearly truncating a given
numerically constructible family using a suitable semisimple set can
enhance the support of the functions $\length_V$, $\min_{\lambda,V}$, and
$\max_{\lambda,V}$ in Theorem~\ref{t:num-constructible=>quasipol}.
\end{remark}

\section{Constructible modules}\label{s:constructible-mods}

Quasipolynomial behavior arises, in our view, because algebraic
objects come in families that are parametrized in a semisimple manner,
or more precisely, parametrized by Presburger groups
(Definition~\ref{d:presburger-group}).  Although our interest is in
objects defined by Presburger arithmetic, the proofs are often more
transparent when phrased generally in terms of an arbitray
``class~$\XX$'' (Definition~\ref{d:class-X}), analogous to an
$o$-minimal structure.  These definitions give rise to notions of
constructible module (Definitions~\ref{d:tame}
and~\ref{d:constructible}).

This section builds foundations that focus on Presburger groups as
parametrizing vector spaces.  Later, particularly in
\S\ref{s:families}, additional layers of theory allow the
Presburger group to be interpreted as parametrizing a family of
multigraded modules instead of merely a family of vector spaces.

\begin{defn}\label{d:pogroup}
A \emph{partially ordered abelian group} is an abelian group~$Q$ with
a submonoid~$Q_+$, the \emph{positive cone}, having trivial unit
group.  The partial order is given by: $\qq \preceq \qq' \iff \qq' - \qq
\in Q_+$.  If $Q_+$
generates a subgroup of finite index in~$Q$ then $Q$ is \emph{full}.
\end{defn}

\begin{defn}\label{d:presburger-group}
A \emph{Presburger group} is a full partially ordered free abelian
group $Q$ of finite rank whose positive cone~$Q_+$ is a semisimple
subset of~$Q$.
\end{defn}

\begin{example}\label{e:affine-semigroup}
If $Q_+$ is an affine semigroup, then $Q$ is a Presburger group by
Theorem~\ref{t:presburger}; see also the more general
Proposition~\ref{p:atoms-definable}.
\end{example}

\begin{remark}\label{r:full}
The ``full'' hypothesis in Definition~\ref{d:pogroup} arises in the
process of reducing to the case of freely parametrized families
(Proposition~\ref{p:free-reduction}).
\end{remark}

For many of the statements and proofs, the salient properties of the
semisimple hypothesis in Definition~\ref{d:presburger-group} hold in
much greater generality.

\begin{defn}\label{d:class-X}
A full partially ordered abelian group~$Q$ is \emph{class~$\XX$} if it
is a subgroup of a real vector space of finite dimension and $Q_+$
belongs to a family~$\XX$ of subsets of~$Q$ closed under complements,
finite unions, negations, and Minkowski sums with~$Q_+$.
\end{defn}

\begin{prop}\label{p:valid-class-X}
The semisimple sets form a valid family $\XX$ to define a class~$\XX$
group in Definition~\ref{d:class-X}.
\end{prop}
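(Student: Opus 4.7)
The plan is to identify semisimple subsets with Presburger definable ones via Theorem~\ref{t:presburger} and then verify each required closure property by an elementary manipulation of first-order formulas. Since $Q$ is a finitely generated free abelian subgroup of a finite-dimensional real vector space (implicit whenever semisimple sets are to be well defined via Definition~\ref{d:semisimple}), we fix an isomorphism $Q \cong \ZZ^d$ to transport everything into the standard Presburger setting.

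Now let $S, T \subseteq Q$ be semisimple, defined by Presburger formulas $F_S(\uu)$ and $F_T(\uu)$ respectively, and let $F_{Q_+}(\uu)$ be a Presburger formula for~$Q_+$, which exists because the class~$\XX$ hypothesis on~$Q$ demands that $Q_+$ itself lie in the family. Then $Q \minus S$ is defined by $\neg F_S(\uu)$; the union $S \cup T$ by $F_S(\uu) \vee F_T(\uu)$, handling arbitrary finite unions by iteration; the negation $-S = \{-\sss \mid \sss \in S\}$ by $F_S(-\uu)$, which remains a Presburger formula because $-\uu$ is a legal integer affine-linear substitution; and the Minkowski sum $S + Q_+$ by
$$
(\exists \vv, \ww \in \ZZ^d)\bigl(F_S(\vv) \wedge F_{Q_+}(\ww) \wedge \uu = \vv + \ww\bigr).
$$
In each case the resulting formula is Presburger, so Theorem~\ref{t:presburger} applied in reverse returns the defined set to the semisimple~world.

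No step presents any real obstacle: the entire content of the proposition is that the dictionary \emph{semisimple} $\iff$ \emph{Presburger definable} converts the syntactic closure of first-order formulas under negation, disjunction, existential quantification, and linear substitution into the corresponding set-theoretic closure under the four operations listed in Definition~\ref{d:class-X}. The only point requiring a moment's thought is the formula for the Minkowski sum, which exploits that linear expressions in free variables may appear inside a permitted existential quantification; all other properties are immediate.
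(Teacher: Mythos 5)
Your proof is correct and takes essentially the same approach as the paper: both reduce via Theorem~\ref{t:presburger} to Presburger definability and then observe that the four closure operations correspond to first-order formula manipulations, with the Minkowski sum formula written out explicitly. The paper merely asserts closure under complements, finite unions, and negations without writing the formulas, whereas you spell them out; the substance is identical.
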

\begin{proof}
Complements, finite unions, and negatives of Presburger definable sets
are Presburger definable.  Now, fix an embedding $Q \into \ZZ^d$.  Let $S\subset
Q$ be Presburger definable by the formula~$F(\uu)$, and let $G(\vv)$ be a
formula that defines~$Q_+$.  The Minkowski sum $S+Q_+$ is defined by
$
  H(\rr)
  =
  (\exists \uu, \vv\in \ZZ^d)
  \big(F(\uu)\wedge G(\vv)\wedge \rr=\uu + \vv\big).
$
Since Presburger definable $\iff$ semisimple by
Theorem~\ref{t:presburger}, the proof is done.
\end{proof}

\begin{defn}\label{d:Q-mod}
Fix an arbitrary poset~$Q$.  A~\emph{$Q$-module} is a $Q$-graded
$\kk$-vector space $M = \bigoplus_{\qq \in Q} M_\qq$ with a $\kk$-linear map $M_\qq \to
M_{\qq'}$ for every pair $\qq \preceq \qq'$ in $Q$ such that
\begin{itemize}
\item%
$M_\qq \to M_\qq$ is the identity and
\item%
$M_\qq\to M_{\qq''}$ is equal to the composite $M_\qq\to M_{\qq'}\to
M_{\qq''}$ if $\qq\preceq \qq'\preceq \qq''$.
\end{itemize}
\end{defn}

Definition~\ref{d:Q-mod} is equivalent to requiring that $M$
constitute a functor from the partially ordered set~$Q$, viewed as a
small category, to the category of vector spaces over~$\kk$.

\begin{defn}\label{d:constant-subdivision}
Let $M$ be a $Q$-module over a poset~$Q$.  A \emph{constant
subdivision of $Q$ subordinate to~$M$} is a partition of $Q$ such that
for each region $I$ in the partition there exists a vector space $M_I$
and isomorphisms $M_I\to M_\ii$ for every $\ii\in I$, which has \emph{no
monodromy}, i.e., for any pair of regions $I,J$ and any $\ii\in I$, $\jj\in
J$, if $\ii\preceq \jj$ then the composition $M_I\to M_\ii\to M_\jj\to M_J$ only depend on
$I$ and $J$.  In this case we also say that $M$ \emph{dominates} the
given partition of $Q$.
\end{defn}

\begin{defn}[{\cite[Definition~2.20 and
Remark~2.21]{hom-alg-poset-mods}}]\label{d:tame}
Fix a poset~$Q$.
\begin{enumerate}
\item%
A~$Q$-module~$M$ is \emph{tame} if $\dim_\kk M_\qq$ is finite for every $\qq
\in Q$ and $Q$ has a constant subdivision with finitely many regions
that is subordinate to~$M$.
\item%
If $Q$ is a class~$\XX$ group, then $M$ is \emph{of class~$\XX$} if it
is tame via a constant subdivision whose regions lie in the
family~$\XX$.
\end{enumerate}
\end{defn}

\begin{remark}\label{r:tame-vs-class-X}
The notion of tame coincides with class~$\XX$ for the family $\XX$ of
all subsets of~$Q$, so there is no need to separate claims about
tameness from those about class~$\XX$.
\end{remark}

\begin{defn}\label{d:indicator-module}
Fix a poset~$Q$.  An \emph{upset}
$U \subseteq Q$ is a subset closed under going upward in~$Q$ (so $\qq \in U$ and
$\qq \preceq \qq' \implies \qq' \in U$).  Dually, a \emph{downset} $D \subseteq Q$ is a subset
closed under going downward in~$Q$.  If $S$ is the intersection of an
upset and a downset (one of which might be all of~$Q$), then $S$ is
\emph{poset-convex}, and $\kk\{S\}$ denotes the \emph{indicator module}
of~$S$, namely the natural $Q$-module with basis $\{\xx^\qq \mid \qq \in S\}$.
When $U$ is an upset, $\kk[U]$ is an \emph{upset module}; when $D$ is a
downset, $\kk[D]$ is a \emph{downset module}.
\end{defn}

\begin{remark}\label{r:syzygy-thm}
The syzygy theorem for poset modules
\cite[Theorem~6.12]{hom-alg-poset-mods} states many equivalent
characterizations of class~$\XX$ modules, one of which is that they
admit finite presentations and resolutions by class~$\XX$ upset or
downset modules.  The generality of the term ``class~$\XX$'' in
\cite{hom-alg-poset-mods} assumes that the poset~$Q$ is a subposet of
a ``real partially ordered group'' \cite[Definitions~2.19
and~2.20]{hom-alg-poset-mods}.  It is an unfortunate oversight that
\cite[Definition~2.19]{hom-alg-poset-mods} assumes the positive cone
$Q_+$ of a real partially ordered group generates~$Q$---unfortunate
because that hypothesis is never used in \cite{hom-alg-poset-mods};
all that matters is containment in a real vector space (for conditions
like ``semialgebraic'' or ``subanalytic'') and existence of an ambient
partial order defined by a class~$\XX$ positive cone.  This is
relatively straightforward to verify, because
\cite{hom-alg-poset-mods} is written for arbitrary posets, so one only
needs to search for the phrase ``class~$\XX$''
to see that all occurrences (there are less than two dozen) have no
use for the ``generated by~$Q_+$'' hypothesis.  This slight change in
generality is relevant to the ``full'' hypothesis in
Definitions~\ref{d:pogroup} and~\ref{d:presburger-group} because, when
$Q_+$ generates a proper subgroup $\ZZ Q_+ \subsetneq Q$, the partial order
on~$Q$ is not induced by any inclusion of~$Q$ into a real partially
ordered group, as per \cite[Definition~2.19]{hom-alg-poset-mods},
which disallows \mbox{incomparable}~distinct~cosets~of~$\ZZ Q_+$.
\end{remark}

\begin{remark}\label{r:constructible}
To say a module is of class~$\XX$ means that the module is
``constructible'', which for example specializes to semialgebraically
constructible or subanalytically constructible when the building
blocks are subsets of real vector spaces that are semialgebraic or
subanalytic.  In the semisimple setting, the building blocks are
instead translates of affine semigroups built from a single given
lattice~$Q \cong \ZZ^d$ (Definition~\ref{d:semisimple}).
\end{remark}

In analogy with Remark~\ref{r:constructible}, the class of semisimple
subsets gives rise to a notion of constructible modules.

\begin{defn}\label{d:constructible}
A module~$M$ over a Presburger group~$Q$
(Definition~\ref{d:presburger-group}) is \emph{semisimply
constructible} if it is of class~$\XX$ for $\XX = \text{semisimple}$
in Definition~\ref{d:tame}.
In this paper, the term \emph{constructible} means semisimply
constructible unless otherwise stated.
\end{defn}

\begin{remark}\label{r:semisimple-not-rep-theor}
The notion of semisimple here is different than the usual
module-theoretic definition of ``semisimple'' meaning ``decomposes as
a direct sum of indecomposables''.  Arbitrary $Q$-modules in
Definition~\ref{d:Q-mod} are semisimple in this decomposition sense by
the Krull--Remak--Schmidt--Azumaya theorem
\cite[Theorem~1.1]{botnan-crawley-boevey2020}.
\end{remark}

\section{Flat complexes of modules}\label{s:flat}

Prior sections set up objects and morphisms with appropriate
finiteness conditions (semisimple, Presburger, constructible).  This
means our \emph{input} modules have appropriately structured (i.e.,
quasipolynomial, see \S\ref{s:functors-on-families}) numerics.
What we need now is functors that preserve the finiteness so the
\emph{output} modules have similarly structured numerics.  This
section initiates the process with general results concerning how to
construct flat complexes of class~$\XX$
(Corollary~\ref{c:X-flat-complex}) and why such complexes remain of
class~$\XX$ after tensoring with a module of class~$\XX$
(Theorem~\ref{t:X-flat}).

\subsection{Homomorphisms and complexes}\label{b:complexes}\mbox{}

\medskip
\noindent
Since the goal is to prove that various functors preserve
constructibility, finiteness conditions are needed not merely for
objects, but for morphisms and complexes, as~well.

\begin{defn}\label{d:morphism}
Fix a poset $Q$.  A \emph{homomorphism} of $Q$-modules $\phi: M \to N$ is a
collection of $\kk$-linear maps $M_\qq \to N_\qq$ for $\qq \in Q$ making the
diagram
$$
\begin{array}{c@{\ }c@{\ }c}
     M_{\qq}    & \too &    N_{\qq'}
\\[.5ex]
  \big\downarrow&      & \big\downarrow
\\[.5ex]
    M_{\qq'}    & \too &    N_{\qq'}
\end{array}
$$
commute for every pair $\qq \preceq \qq'$.
\end{defn}

\begin{defn}[{\cite[Definition\,4.29]{hom-alg-poset-mods}}]\label{d:morphism-tame}
Fix a poset~$Q$.  A homomorphism $\phi: M \!\to N$ of $Q$-modules is
\emph{tame} if $Q$ admits a finite constant subdivision subordinate to
both $M$ and~$N$ such that for each region $I$ in the partition the
composite $M_I \to M_\ii \to N_\ii \to N_I$ does not depend on $\ii \in I$.  The
constant subdivision is also said to be \emph{subordinate} to~$\phi$.
If~$Q$ and the constant subdivision are class~$\XX$ then $\phi$ is
\emph{of class $\XX$}.  If $Q$ is a Presburger group and the constant
subdivision is semisimple, then $\phi$ is \emph{constructible}.
\end{defn}

The notion of upset presentation in the upcoming
Definition~\ref{d:upset-presentation} is needed later, for the proof
of Proposition~\ref{p:multiply-ideal-module}.  After that, for the
record we state in Lemma~\ref{l:upset-criterion-class-X} a criterion
to detect when a morphism of modules is of class~$\XX$; it patches an
oversight in \cite[Theorem~6.12]{hom-alg-poset-mods}, which only
stated one direction.

\begin{defn}[{\cite[Definition~6.4.1]{hom-alg-poset-mods}}]\label{d:upset-presentation}
Fix a class~$\XX$ group~$Q$.  An \emph{upset presentation} of a
$Q$-module~$M$ is an expression of~$M$ as the cokernel of a
homomorphism $F_1 \to F_0$ such that each $F_i$ is a direct sum of
finitely many upset modules.
\begin{enumerate}
\item%
An upset presentation is \emph{of class~$\XX$} if all of the upsets
appearing in it are class~$\XX$.
\item%
A \emph{morphism} of upset presentations is morphism of complexes each
of which is an upset presentation.
\end{enumerate}
\end{defn}

\begin{remark}\label{r:upset-presentation}
Over an arbitrary poset, it is necessary to require every component
of the homomorphism $F_1 \to F_0$ to be connected in the sense of
\cite[Definition~3.14]{hom-alg-poset-mods}.  But partially ordered
groups are upper-connected
\cite[Definition~3.5.2]{hom-alg-poset-mods}, so all homo\-morphisms of
upset modules are automatically connected
\cite[Corollary~3.11.2]{hom-alg-poset-mods}.
\end{remark}

\begin{lemma}\label{l:upset-criterion-class-X}
A homomorphism $\phi: M \to N$ of $Q$-modules over a class~$\XX$ group~$Q$
is of class~$\XX$ if and only if it is induced by a morphism
of upset presentations.
\end{lemma}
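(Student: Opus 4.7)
The plan is to prove the two implications separately, with the ``only if'' direction being the substantive one.

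For the ``if'' direction, suppose $\phi$ is induced by a morphism $(\psi_1,\psi_0)\colon(F_1^M\to F_0^M)\to(F_1^N\to F_0^N)$ of class~$\XX$ upset presentations. Each summand $\kk[U]$ appearing in any $F_i^\bullet$ is constant on the two-block partition $\{U,Q\smallsetminus U\}$, whose blocks lie in~$\XX$ by closure under complements; and every component of $d^M$, $d^N$, $\psi_0$, $\psi_1$ is determined by a scalar on the common support of its source and target indicator modules, hence is constant there. I would take the common refinement of the finitely many such two-block partitions coming from the upsets in both presentations; the refinement remains class~$\XX$ by closure of~$\XX$ under finite unions and complements (Definition~\ref{d:class-X}). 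On each region of the refinement the complexes $F_\bullet^M$ and $F_\bullet^N$ have constant modules and differentials, and the chain map is constant as well. Since cokernel is functorial, $M$, $N$, and $\phi$ are all constant on each region, which is the definition of $\phi$ being of class~$\XX$.

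For the ``only if'' direction, I would start with a class~$\XX$ constant subdivision of~$Q$ subordinate to $\phi$ (hence to $M$ and $N$) and then construct compatible class~$\XX$ upset presentations together with a lift of~$\phi$. The syzygy theorem \cite[Theorem~6.12]{hom-alg-poset-mods} produces class~$\XX$ upset presentations of~$M$ and~$N$ separately; the added content is to choose them compatibly with~$\phi$. My plan is to run the construction underlying Theorem~6.12 on the two-term complex $M \xrightarrow{\phi} N$---or, equivalently, on its mapping cylinder---viewed as a single class~$\XX$ object under the common subordinate subdivision. That construction places generators and relations at combinatorially distinguished loci of the subdivision, so executing it in the presence of $\phi$ yields upset presentations of $M$ and $N$ built from a shared stock of class~$\XX$ upsets, together with a chain map read off the constant region-wise maps $\phi_I\colon M_I \to N_I$ supplied by subordination.

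The main obstacle is precisely this compatibility: Theorem~6.12 alone does not guarantee that two \emph{independently} chosen class~$\XX$ upset presentations of $M$ and $N$ admit a class~$\XX$ chain map inducing~$\phi$. If the cylinder strategy above meets unexpected trouble, the backup is to lift $\phi$ to a morphism of given upset presentations by standard projective lifting and then verify the lift is class~$\XX$ component by component, using that $\Hom$ between upset modules is determined by scalar data on the intersection of their supports, which is again class~$\XX$ by closure under intersections (via complements and unions).
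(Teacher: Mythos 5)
Your ``if'' direction is essentially the paper's argument: the paper also takes the common refinement of the two-block partitions $\{U, Q\smallsetminus U\}$ coming from all upsets in the four presentation modules and concludes subordination to~$\phi$ from the fact that, over a partially ordered group, homomorphisms of upset modules are automatically connected (Remark~\ref{r:upset-presentation}), i.e.\ inclusion followed by global scalar multiplication---precisely your ``determined by a scalar on the common support'' observation.

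For the ``only if'' direction, though, you're working harder than necessary and in the process leaving the argument unfinished. You treat the cited syzygy theorem \cite[Theorem~6.12]{hom-alg-poset-mods} as though it only produced class~$\XX$ upset presentations of $M$ and $N$ separately, with the compatibility of $\phi$ an extra problem to be solved. In fact that theorem is stated for class~$\XX$ \emph{morphisms}, not just objects---the paper relies on exactly this in Lemma~\ref{l:summable} (``If $M\to N$ is a class~$\XX$ morphism, then it lifts to a class~$\XX$ upset presentation by the syzygy theorem'')---so the ``only if'' direction is literally part of what is being cited; the oversight the lemma patches is that the cited theorem omitted the \emph{converse} (your ``if'' direction), not that it failed to treat morphisms. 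Your mapping-cylinder strategy is a plausible route to re-derive the morphism-level statement from the object-level one, but as written it is a plan rather than a proof (``That construction places generators and relations at combinatorially distinguished loci\ldots''), and your fallback---lift $\phi$ by projective lifting and then verify the lift is class~$\XX$ component by component---is exactly the point where the claim has content, so asserting it without detail doesn't close the gap you yourself raised. In short: correct diagnosis of where the difficulty \emph{would} be, but a misreading of the scope of the cited theorem leads you into a detour you then don't complete.
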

\begin{proof}
Suppose $\phi$ is induced by a morphism of upset presentations.  Let
$\kk[U_1], \ldots, \kk[U_r]$ be the list of all summands of the four modules
involved.  Each upset $U$ subdivides~$Q$ into two regions, namely $U$
and $Q \minus U$.  The common refinement of these subdivisions is
subordinate to~$\phi$ by the connectedness in
Remark~\ref{r:upset-presentation}.  That proves the ``if'' direction.
The ``only if'' direction is part of the syzygy theorem
\cite[Theorem~6.12]{hom-alg-poset-mods}.
\end{proof}

\begin{defn}[{\cite[Definition~6.17]{hom-alg-poset-mods}}]\label{d:complex-class-X}
Fix a complex $C^\spot$ of $Q$-modules.
\begin{enumerate}
\item%
The complex $C^\spot$ is \emph{of class~$\XX$} if its modules and
morphisms are of class~$\XX$.

\item%
A constant subdivision is \emph{subordinate} to $C^\spot$ if it is
subordinate to all of the modules and morphisms therein, and then
$C^\spot$ is said to \emph{dominate} the subdivision.
\end{enumerate}
\end{defn}

\subsection{Localization}\label{b:localization}\mbox{}

\medskip
\noindent
An initial premise is that any tensor product in which one of the
modules is class~$\XX$ and the other is noetherian should result in a
class~$\XX$ module.  However, just a bit more work shows it is enough
to demand that one of the modules has a presentation by modules that
are finite direct sums of localizations of the monoid
algebra~$\kk[Q_+]$, as long as those localizations are all of
class~$\XX$.  (The noetherian case occurs when $\kk[Q_+]$ is noetherian
and the localizations are free by virtue of not inverting any nonunit
elements.)  The next definition introduces this type of module.  The
relaxation from finitely freely presented to flatly presented is
useful because, for example, local cohomology can be computed using
complexes of flat modules.

Recall Convention~\ref{conv:conventions} concerning the group~$Q$ and
field~$\kk$.  In particular, $Q$ need not be Presburger unless otherwise
stated.

\begin{defn}\label{d:localization}
Let $S \subseteq Q_+$ be a set that is closed under addition.  The
\emph{localization} of a $Q$-module~$M$ \emph{along~$S$} is the tensor
product
$$
M_S
=
M[\xx^{-\qq} \mid \qq \in S]
=
M \otimes_{\kk[Q_+]} \kk[\xx^{-\qq} \mid \qq \in S].
$$
\end{defn}

\begin{remark}\label{r:localization}
A $Q$-module~$M$ is equivalently a $Q$-graded module over the monoid
algebra $\kk[Q_+]$.  As such, the localization $M[m^{-1}]$ of~$M$ by
inverting an explicit monomial $m = \xx^\qq \in \kk[Q_+]$ is the usual notion
from commutative algebra.
\end{remark}

\begin{defn}\label{d:X-flat}
A $Q$-module $M$ is \emph{$\XX$-flat} if it is a finite direct sum of
class~$\XX$ modules that are translates of localizations of~$\kk[Q_+]$.
The module $M$ is \emph{semisimply flat} if $Q$ is a Presburger group
and $M$ is $\XX$-flat with $\XX = \text{semisimple}$.  (The meaning of
``semisimply'' here refers to Definition~\ref{d:semisimple}, but in
this special context it does not conflict with the direct sum sense in
Remark~\ref{r:semisimple-not-rep-theor}.)
\end{defn}

\begin{defn}\label{d:summable}
A class $\XX$ set $S \subseteq Q$ is \emph{Minkowski summable} if for
any other class $\XX$ set $S' \subseteq Q$ the Minkowski sum $S + S'$
is class $\XX$.
\end{defn}

\begin{remark}\label{e:summable}
For most of the commonly selected classes~$\XX$, including
semialgebraic, PL, or semisimple (that is, Presburger definable; see
Theorem~\ref{t:presburger}), every class~$\XX$ set is Minkowski
summable.  The subanalytic class is an exception, although the
Minkowski sum of a \emph{bounded} subanalytic set with another
subanalytic set is subanalytic.  As a concrete class that contains
sets that are not Minkowski summable, consider the class~$\XX(2)$
consisting of semialgebraic regions defined by polynomials that are
quadratic or linear.  As long as the positive cone~$Q_+$ is
polyhedral, this class~$\XX(2)$ is closed under complement, finite
intersection, negation, and Minkowski sum with the positive
cone~$Q_+$.  However, the Minkowski sum of two quadratic upsets might
require quartic (i.e., degree~$4$) polynomial inequalities.
\end{remark}

\begin{lemma}\label{l:summable}
Suppose that the positive cone~$Q_+$ is of class~$\XX$.  Localizing
any class~$\XX$ module or morphism along a Minkowski summable subset
of~$Q_+$ yields a module or morphism of class~$\XX$.
\end{lemma}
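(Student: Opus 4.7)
The plan is to reduce both halves of the statement---first for modules, then for morphisms---to a single atomic computation, namely localization of a class~$\XX$ upset module, leveraging the syzygy theorem \cite[Theorem~6.12]{hom-alg-poset-mods} together with its morphism-level counterpart Lemma~\ref{l:upset-criterion-class-X}.

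The atomic calculation runs as follows. For any upset $U \subseteq Q$, the localization of the indicator module satisfies $\kk[U]_S = \kk[U - S]$, where $U - S = \{\qq \in Q \mid \qq + s \in U \text{ for some } s \in S\}$. Indeed, $(\kk[U]_S)_\qq = \dirlim_{s \in S} \kk[U]_{\qq+s}$ equals~$\kk$ as soon as some $\qq + s$ lies in~$U$---the upset property then forces $\qq + s + s' \in U$ for every $s' \in S$, so the transition maps are identities---and equals~$0$ otherwise. The set $U - S$ is itself an upset, since its defining condition is preserved under translation by any element of~$Q_+$, and it is class~$\XX$ via the identity $U - S = -(S + (-U))$: the set $-U$ is class~$\XX$ by closure of~$\XX$ under negation, the Minkowski sum $S + (-U)$ is class~$\XX$ by Minkowski summability of~$S$, and one more negation returns class~$\XX$.

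With this in hand, I would promote to arbitrary class~$\XX$ modules using the syzygy theorem: such an~$M$ admits a class~$\XX$ upset presentation $F_1 \to F_0 \to M \to 0$. Localization along~$S$ is exact, being tensor product with the flat localization $\kk[\xx^{-\qq} \mid \qq \in S]$ of~$\kk[Q_+]$, so $(F_1)_S \to (F_0)_S \to M_S \to 0$ is exact as well. The atomic step identifies each summand of each $(F_i)_S$ as a class~$\XX$ upset module, so this exact sequence is a class~$\XX$ upset presentation of~$M_S$ in the sense of Definition~\ref{d:upset-presentation}; the syzygy theorem in the other direction then yields that $M_S$ is class~$\XX$.

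For morphisms $\phi: M \to N$ of class~$\XX$, Lemma~\ref{l:upset-criterion-class-X} realizes $\phi$ as induced by a morphism of class~$\XX$ upset presentations. Localizing that morphism of complexes termwise produces a morphism of class~$\XX$ upset presentations of $M_S$ and~$N_S$ (again by the atomic step), whose induced map on cokernels is $\phi_S$; a second application of Lemma~\ref{l:upset-criterion-class-X} then concludes that $\phi_S$ is class~$\XX$. The only place anything could go wrong is the atomic step: the hypothesis that~$S$ is Minkowski summable---together with closure of~$\XX$ under negation---is exactly what is required to keep the ``$-S$'' twist of the upset~$U$ inside~$\XX$, which also accounts for the lemma's requirement that $Q_+$ itself be of class~$\XX$, so that speaking of Minkowski summable subsets of~$Q_+$ is even sensible.
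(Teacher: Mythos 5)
Your proof is correct and takes essentially the same route as the paper's: compute the localization of an upset module as $\kk\{-(-U+S)\}$, then reduce to this atom via a class~$\XX$ upset presentation for modules and via Lemma~\ref{l:upset-criterion-class-X} (equivalently, the syzygy theorem) for morphisms, using exactness of localization. Your treatment of the morphism case is in fact slightly more careful than the paper's, since you explicitly invoke both directions of Lemma~\ref{l:upset-criterion-class-X}, whereas the paper glosses the final step with a terse reference to Definition~\ref{d:summable}.
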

\begin{proof}
Localizing $\kk\{U\}$ for an upset~$U$ along~$S$ yields $\kk\{U - S\}
= \kk\{-(-U + S)\}$, which only requires negation and adding~$S$
to~$-U$.  Any class~$\XX$ module~$M$ admits a class~$\XX$ upset
presentation by \cite[Theorem~6.12]{hom-alg-poset-mods} (see
Remark~\ref{r:syzygy-thm}).  Localizing any such presentation yields
an upset presentation of the localization of~$M$ by exactness of
localization.  If $M \to N$ is a class~$\XX$ morphism, then it lifts
to a class~$\XX$ upset presentation by the syzygy theorem
\cite[Theorem~6.12]{hom-alg-poset-mods}, whose localization along~$S$
is an upset presentation of the localized morphism~$M_S \to N_S$,
which is class~$\XX$ by Definition~\ref{d:summable} if $S$~is
Minkowski summable.
\end{proof}

\subsection{\texorpdfstring{$\XX$}{X}-flat complexes}\label{b:X-flat}%

\begin{prop}\label{p:Cech-tame}
Suppose that the positive cone~$Q_+$ is of class~$\XX$.  If $M$ is of
class~$\XX$ then any fixed scalar times the localization
morphism $M \to M_S$ along a class~$\XX$ Minkowski summable subset~$S$
of~$Q_+$ is of class~$\XX$.
\end{prop}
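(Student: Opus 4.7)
The plan is to prove this in two stages: first, establish that the natural localization morphism $\iota: M \to M_S$ is of class~$\XX$; second, observe that scalar multiples of class~$\XX$ morphisms remain of class~$\XX$. The first stage will be handled via a morphism of upset presentations and Lemma~\ref{l:upset-criterion-class-X}.

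First I would apply the syzygy theorem (Remark~\ref{r:syzygy-thm}) to obtain a class~$\XX$ upset presentation $F_1 \to F_0 \twoheadrightarrow M$, where each $F_i = \bigoplus_j \kk\{U_{ij}\}$ is a finite direct sum of upset modules for class~$\XX$ upsets $U_{ij} \subseteq Q$. Exactness of localization and its compatibility with direct sums yield an induced presentation $(F_1)_S \to (F_0)_S \twoheadrightarrow M_S$ whose summands are $\kk\{U_{ij} - S\}$. A direct check, using that $U_{ij}$ is an upset and $S \subseteq Q_+$, shows each $U_{ij} - S$ is again an upset in~$Q$. More delicately, each $U_{ij} - S$ is of class~$\XX$ because $U_{ij} - S = -\bigl(S + (-U_{ij})\bigr)$, and the family~$\XX$ is closed under negation and under Minkowski sums with the Minkowski summable set~$S$. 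Hence the localized presentation is a class~$\XX$ upset presentation of $M_S$.

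Next I would assemble the natural inclusions $\kk\{U_{ij}\} \hookrightarrow \kk\{U_{ij} - S\}$ into a commuting square $F_\spot \to (F_\spot)_S$; commutativity is automatic from the naturality of localization. This is a morphism of class~$\XX$ upset presentations whose induced map on cokernels is $\iota$, so Lemma~\ref{l:upset-criterion-class-X} delivers that $\iota$ is of class~$\XX$. For any $c \in \kk$, if a class~$\XX$ constant subdivision of~$Q$ is subordinate to $\iota$ in the sense of Definition~\ref{d:morphism-tame}, then the same subdivision is subordinate to $c \cdot \iota$: each composite $M_I \to M_\ii \to (M_S)_\ii \to (M_S)_I$ is merely multiplied by~$c$ and hence remains independent of $\ii \in I$. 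Consequently $c \cdot \iota$ is of class~$\XX$.

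The main obstacle is confirming that Minkowski summability of~$S$ genuinely propagates to class~$\XX$-ness of each localized upset $U_{ij} - S$, since that is the pivot making $(F_\spot)_S$ qualify as a class~$\XX$ upset presentation. Once that ingredient is secured, the syzygy theorem and Lemma~\ref{l:upset-criterion-class-X} package everything, and the trivial invariance of the subordinate subdivision under scalar multiplication finishes the argument.
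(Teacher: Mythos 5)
Your proposal is correct and takes essentially the same approach as the paper: both localize a class~$\XX$ upset presentation of~$M$, use the identity $U-S = -\bigl(S + (-U)\bigr)$ together with Minkowski summability and closure under negation to keep the localized upsets in class~$\XX$, and then invoke the upset-presentation criterion (Lemma~\ref{l:upset-criterion-class-X}) to conclude the morphism is class~$\XX$. You go one step further and explicitly record why the scalar multiple remains class~$\XX$ via the same subordinate subdivision, a point the paper leaves implicit.
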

\begin{proof}
Since $S$ is Minkowski summable, the localized upset $Q_+ - S = -(-Q_+
+ S)$ is of class~$\XX$.  Tensoring an upset presentation of~$M$ with
the localization morphism $\kk[Q_+] \to \kk[Q_+ - S]$ yields a
homomorphism of upset presentations which presents the morphism $M \to
M_S$.
\end{proof}

\begin{cor}\label{c:X-flat-complex}
Any complex in which the modules are $\XX$-flat is a complex of
class~$\XX$.  Any such complex is called an \emph{$\XX$-flat complex}.
\end{cor}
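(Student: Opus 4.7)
The plan is to reduce the statement to Lemma~\ref{l:upset-criterion-class-X}. An $\XX$-flat module~$M$ is a finite direct sum of class~$\XX$ translates of localizations of~$\kk[Q_+]$; since $Q_+$ is itself an upset in~$Q$ (if $\qq \in Q_+$ and $\qq' \succeq \qq$ then $\qq' = \qq + (\qq'-\qq) \in Q_+ + Q_+ \subseteq Q_+$), each such localization has the form $\kk\{Q_+ - S\}$ for some additively closed $S \subseteq Q_+$, and both $Q_+ - S$ and any translate of it are upsets in~$Q$: if $\qq \in Q_+ - S$ and $\qq' \succeq \qq$, then $\qq'-\qq \in Q_+$ forces $\qq' \in Q_+ - S$, and translating by $\rr \in Q$ preserves this since the partial order on~$Q$ is translation-invariant. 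Thus each summand of~$M$ is a class~$\XX$ upset module.

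From this observation, $M$ itself is of class~$\XX$: the family~$\XX$ is closed under complements and finite unions by Definition~\ref{d:class-X}, hence under finite intersections, so the common refinement of class~$\XX$ constant subdivisions subordinate to the individual summands is a class~$\XX$ constant subdivision subordinate to~$M$. Moreover, $M$ admits the trivial class~$\XX$ upset presentation $F_1 \to F_0 = M$ with $F_1 = 0$, in the sense of Definition~\ref{d:upset-presentation}.

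Any homomorphism $\phi: M \to N$ between two $\XX$-flat modules is then, vacuously, a morphism of the trivial upset presentations $(0 \to M)$ and $(0 \to N)$: there is nothing to commute with since the $F_1$ terms both vanish, so $\phi$ itself is the data of the morphism of presentations. Lemma~\ref{l:upset-criterion-class-X} concludes that $\phi$ is of class~$\XX$. Applied module by module and differential by differential to a complex of $\XX$-flat modules, this gives that both the modules and the morphisms of the complex are of class~$\XX$, which is precisely the requirement of Definition~\ref{d:complex-class-X}(1). No substantial obstacle arises; the entire content of the argument is the recognition that translated localizations of $\kk[Q_+]$ are class~$\XX$ upset modules, after which the rest is a direct application of prior results.
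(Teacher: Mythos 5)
Your proof is correct, and it routes through a different intermediate result than the paper does. The paper argues directly at the level of indecomposable summands: it invokes \cite[Corollary~3.11.2]{hom-alg-poset-mods} to identify every nonzero morphism $\kk\{U\} \to \kk\{U'\}$ between localizations as a scalar multiple of a natural localization map $\kk\{U\} \to \kk\{U\}_{S'}$, and then applies Proposition~\ref{p:Cech-tame} componentwise. You instead observe that each $\XX$-flat module is by definition a finite direct sum of class~$\XX$ upset modules, so the identity $0 \to M$ is already a class~$\XX$ upset presentation, and every homomorphism between two $\XX$-flat modules is (vacuously) a morphism of such trivial presentations; Lemma~\ref{l:upset-criterion-class-X} then finishes. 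This is cleaner in that it dispenses with explicit identification of the matrix entries of the differential as localization morphisms. But note that the underlying mathematical content is the same: the ``if'' direction of Lemma~\ref{l:upset-criterion-class-X} works precisely because of the connectedness in Remark~\ref{r:upset-presentation}, which is a consequence of the very same \cite[Corollary~3.11.2]{hom-alg-poset-mods}. So the paper's proof is essentially an unrolled version of yours, spelling out what the lemma packages away. What your route buys is economy and clarity of logical dependence; what the paper's route buys is an explicit description of the structure of the differential as a matrix of localization maps, which is useful intuition for later arguments (e.g.\ in Theorem~\ref{t:X-flat}).
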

\begin{proof}
The content of the statement is that the morphisms are of class~$\XX$
automatically if the objects are $\XX$-flat.  By
\cite[Corollary~3.11.2]{hom-alg-poset-mods}, all morphisms $\kk\{U\}
\to \kk\{U'\}$ between indecomposable upset modules are inclusion
followed by global scalar multiplication.  If $\kk\{U\} \to \kk\{U'\}$
is a nonzero morphism of localizations, with $U = Q_+ - S$ and $U' =
Q_+ - S'$, then $U \subseteq U'$, by
\cite[Corollary~3.11.2]{hom-alg-poset-mods}.  Thus $\kk\{U\} \to
\kk\{U'\}$ is a natural localization morphism $\kk\{U\} \to
\kk\{U\}_{S'}$ to which Proposition~\ref{p:Cech-tame} applies.
\end{proof}

\begin{thm}\label{t:X-flat}
Fix a class~$\XX$ group~$Q$, and let $C_\spot$ be an $\XX$-flat complex.
If $M$ is of class~$\XX$, then $C_\spot \otimes_Q M$ is of class~$\XX$, as is
the homology of~$C_\spot \otimes M$.
\end{thm}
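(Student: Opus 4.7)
The plan is to decompose the claim into three sub-tasks: showing each term $C_i \otimes_Q M$ is class $\XX$, showing each induced differential is a class $\XX$ morphism, and showing that passing to homology preserves class $\XX$.

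For the first sub-task, each term $C_i$ of the $\XX$-flat complex decomposes by Definition~\ref{d:X-flat} as a finite direct sum $\bigoplus_j \kk[Q_+]_{S_j}[\aa_j]$ of class~$\XX$ translated localizations. Because tensor product over $\kk[Q_+]$ commutes with direct sums and with degree shifts, and because $\kk[Q_+]_{S_j} \otimes_{\kk[Q_+]} M \simeq M_{S_j}$, the term of the tensored complex satisfies
\[
  C_i \otimes_Q M \,\simeq\, \bigoplus_j M_{S_j}[\aa_j].
\]
Each $M_{S_j}$ is of class~$\XX$ by Lemma~\ref{l:summable}, and finite direct sums and translates preserve the property, so each term of $C_\spot \otimes M$ is class~$\XX$. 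Here I implicitly use that the relevant localization subsets $S_j$ are Minkowski summable, which in the semisimple setting is automatic by Remark~\ref{e:summable}.

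For the second sub-task, any differential $d_i$ of $C_\spot$ decomposes into matrix components between summands of the form $\kk[Q_+]_{S_j}[\aa_j] \to \kk[Q_+]_{T_k}[\bb_k]$. By the classification of morphisms of indecomposable upset modules \cite[Corollary~3.11.2]{hom-alg-poset-mods} (as already used in the proof of Corollary~\ref{c:X-flat-complex}), each component is a scalar multiple of the natural localization inclusion, which requires the translated localized cones to be nested. Tensoring with $M$ sends each such component to the corresponding scalar times the natural localization morphism $M_{S_j}[\aa_j] \to M_{T_k}[\bb_k]$ of $M$, which is of class~$\XX$ by Proposition~\ref{p:Cech-tame}. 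Thus the tensored differential $d_i \otimes M$, being a matrix of class~$\XX$ morphisms between class~$\XX$ modules, is itself of class~$\XX$.

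For the homology, I fix a common constant subdivision $\{R_\alpha\}$ of~$Q$ whose regions lie in~$\XX$ and which is simultaneously subordinate to every module and morphism in the finite complex $C_\spot \otimes M$. Such a subdivision exists because only finitely many data are involved, each with its own class~$\XX$ subordinate subdivision, and the common refinement remains class~$\XX$ since complements and finite unions (hence intersections) preserve~$\XX$ by Definition~\ref{d:class-X}. On each region~$R_\alpha$, the restricted complex is a fixed complex of vector spaces with fixed differentials, so its homology in each degree is a fixed vector space, and the absence of monodromy for the complex descends to its kernels, images, and hence homology. Thus the same subdivision is subordinate to each homology module, exhibiting it as class~$\XX$. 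Of the three steps, the second is the main hurdle: it relies on the explicit form of morphisms between indecomposable translated localizations together with Proposition~\ref{p:Cech-tame} to ensure each matrix entry remains class~$\XX$ after tensoring; the other two steps are routine applications of exactness of localization and the common-refinement construction.
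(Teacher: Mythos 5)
Your proof is correct and follows essentially the same approach as the paper's: decompose terms and differentials of $C_\spot\otimes_Q M$ into matrix components of translated localizations of~$M$, reduce each nonzero component to a scalar multiple of a natural localization morphism via \cite[Corollary~3.11.2]{hom-alg-poset-mods}, apply Proposition~\ref{p:Cech-tame}, and pass to homology. The paper is more terse --- it outsources the class~$\XX$-ness of $C_\spot$ to Corollary~\ref{c:X-flat-complex} and simply cites \cite[Proposition~4.30.1]{hom-alg-poset-mods} for kernels and cokernels rather than re-deriving the common-refinement argument --- but the underlying ideas and citations are identical, and your flag about implicit Minkowski summability is a point the paper's own argument shares.
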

\begin{proof}
By Corollary~\ref{c:X-flat-complex}, the complex~$C_\spot$ is of
class~$\XX$.  If $\kk\{U\} \to \kk\{U'\}$ is a nonzero morphism of
indecomposable upset modules, then tensoring this morphism with~$M$
yields a scalar multiple of a natural localization map from $\kk\{U\}
\otimes M \to \kk\{U'\} \otimes M$ by the argument in the proof of
Corollary~\ref{c:X-flat-complex}.  Therefore
Proposition~\ref{p:Cech-tame} applies.  The homogy is class $\XX$ by \cite[Proposition~4.30.1]{hom-alg-poset-mods}.
\end{proof}

\subsection{Faces and localizations along them}\label{b:faces}\mbox{}

\medskip
\noindent
Some further notes about localization help clarify the geometry of
$\XX$-flat modules.

\begin{defn}\label{d:face}
An \emph{ideal} in a monoid $Q_+$ is a subset $I$ closed under adding
any element of~$Q_+$, meaning that $\qq + I \subseteq I$ for all $\qq
\in Q$.  A \emph{face} of the positive cone~$Q_+$ of a partially
ordered abelian group~$Q$ is a submonoid $F \subseteq Q_+$ whose
complement $Q_+ \minus F$ is an ideal of the monoid~$Q_+$.  Sometimes
it is simpler to say that~$F$ is a \emph{face~of~$Q$}.
\end{defn}

\begin{remark}\label{r:prime-ideal}
The usual definition of prime ideal~$\pp$ in a commuative ring~$R$ via
$ab \in \pp \implies a \in \pp$ or $b \in \pp$ is equivalent to
stipulating that the complement of~$\pp$ multiplicative monoid of~$R$
is a face: $a \in R \minus \pp$ and $b \in R \minus \pp \implies ab
\in R \minus \pp$.
\end{remark}

The notions of localizing by inverting elements and localizing along
faces coincide when the positive cone~$Q_+$ is an affine semigroup.

\begin{prop}\label{p:localization}
Assume $Q_+$ is an affine semigroup.  For any $Q$-module~$M$ and
monomial $m = \xx^\qq \in \kk[Q_+]$, the localization $M[m^{-1}]$ is
naturally isomorphic to $M_F$, where $F$ is the \emph{support face}
of~$\qq$\emph{:} the smallest face of $Q_+$ containing~$\qq$.
\end{prop}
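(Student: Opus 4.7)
The plan is to establish a monoid-level identity: the submonoid of $Q_+$ generated by ``every element that becomes invertible when $m$ is inverted'' coincides with the support face~$F$ of~$\qq$. Once that is proved, both localizations $M[m^{-1}]$ and $M_F$ invert exactly the same set of monomials, and so are naturally isomorphic via the universal property of localization.

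First I would prove the following key combinatorial characterization of the support face of~$\qq$ in an affine semigroup:
\[
  F = \bigl\{\qq' \in Q_+ \;\big|\; \exists\, k \in \NN_{>0},\ \exists\, \qq'' \in Q_+ \text{ with } \qq' + \qq'' = k\qq\bigr\}.
\]
Call the right-hand side~$F'$. To see $F'$ is a face of~$Q_+$, note that $0 \in F'$ (use $\qq'' = \qq$, $k=1$), that $F'$ is closed under addition (add the witnessing equations), and that if $\qq_1 + \qq_2 + \qq'' = k\qq$ with all summands in $Q_+$, then each of $\qq_1, \qq_2$ individually lies in~$F'$, so $Q_+ \minus F'$ is an ideal. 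Since $\qq \in F'$ (take $\qq'' = 0$, $k=1$) and $F$ is the smallest face containing~$\qq$, we get $F \subseteq F'$. Conversely, given $\qq' \in F'$ with witnessing equation $\qq' + \qq'' = k\qq$, both $\qq', \qq'' \in F$ because $k\qq \in F$ and the complement of~$F$ is an ideal; hence $F' \subseteq F$.

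Next I would use this to show $\kk[Q_+][m^{-1}] \cong \kk[Q_+]_F$ as $Q$-graded $\kk[Q_+]$-algebras. The inclusion $\qq \in F$ gives a canonical map $\kk[Q_+][m^{-1}] \to \kk[Q_+]_F$ by the universal property of localization. In the other direction, for each $\qq' \in F$ the witnessing relation $\qq' + \qq'' = k\qq$ shows that $\xx^{\qq'} \cdot \xx^{\qq''} = m^k$ is a unit in $\kk[Q_+][m^{-1}]$, so $\xx^{\qq'}$ itself is a unit there, with explicit inverse $\xx^{\qq''}/m^k$. Thus $\kk[Q_+][m^{-1}]$ contains inverses for every $\xx^{\qq'}$ with $\qq' \in F$, yielding the reverse map $\kk[Q_+]_F \to \kk[Q_+][m^{-1}]$. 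The two maps are mutually inverse because they agree on the generating monomials.

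Finally, tensoring the ring isomorphism $\kk[Q_+][m^{-1}] \cong \kk[Q_+]_F$ over $\kk[Q_+]$ with~$M$ yields the desired natural isomorphism $M[m^{-1}] \cong M_F$. The main conceptual hurdle is the face characterization in the first step, which relies on~$Q_+$ being an affine semigroup so that the cone $\RR_{\gs 0} Q_+$ is rational polyhedral and any element of the minimal face of that cone containing~$\qq$ admits an integer witness of the desired form after clearing denominators; everything after that is a routine application of the universal property.
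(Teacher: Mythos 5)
Your proof is correct, and it takes a genuinely different route from the paper's. The paper's proof is a two-sentence affair: it observes that a natural map $M[m^{-1}] \to M_F$ exists because $\qq \in F$, and then cites \cite[Exercise~7.15]{cca} for the key fact that every $\ff \in F$ precedes some positive integer multiple of~$\qq$ in the partial order (which is precisely the containment $F \subseteq F'$ in your notation); the cited exercise is where the affine semigroup hypothesis lives in the paper's account. You instead prove that containment from scratch via the clean observation that your set $F'$ --- the set of $\qq' \in Q_+$ dividing some power of $\xx^\qq$ --- is itself a face of~$Q_+$ containing~$\qq$, so it must contain the minimal such face~$F$; the reverse containment $F' \subseteq F$ is the easy direction in both treatments, using only that faces are closed downward in the divisibility order. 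Your approach buys a self-contained, citation-free argument. One comment: your concluding paragraph misdescribes your own proof. You claim the characterization $F = F'$ "relies on $Q_+$ being an affine semigroup so that the cone $\RR_{\gs 0}Q_+$ is rational polyhedral" with a clearing-of-denominators step, but nothing in your first paragraph uses polyhedrality, rationality, or the real cone: the arguments that $F'$ is a submonoid, that $Q_+ \setminus F'$ is an ideal, and that faces are closed under taking divisors are all purely monoid-theoretic and hold for the positive cone of any partially ordered abelian group with trivial unit group. So your proof in fact establishes the proposition without the affine semigroup hypothesis; that hypothesis appears in the paper's statement largely for continuity with the cited exercise and the surrounding development, not because it is logically forced by your line of argument.
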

\begin{proof}
There is a natural map $M[m^{-1}] \to M_F$ whenever $m = \xx^\qq$ with $\qq \in
F$.  If $Q_+$ is an affine semigroup and $\qq$ lies interior to~$F$,
then this natural map is an isomorphism because every $\ff\in F$ precedes
some positive integer multiple of~$\qq$ in the partial order on~$Q$
\cite[Exercise~7.15]{cca}.
\end{proof}

\begin{cor}\label{c:face-X-flat}
Fix an affine semigroup $Q_+$ whose faces are class $\XX$ and
Minkowski summable.  Assume further that all translates of~$Q_+$ are
class~$\XX$.  Then every $\XX$-flat module is a finite direct sum of
$Q$-translates of localizations of\/~$\kk[Q_+]$ along faces of\/~$Q$.
\end{cor}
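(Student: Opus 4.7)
The plan is to work summand by summand and just rewrite each localization. By Definition~\ref{d:X-flat}, an $\XX$-flat module is already a finite direct sum of $Q$-translates of localizations $\kk[Q_+]_S$ with $S \subseteq Q_+$ closed under addition; translation commutes with localization, so the entire task reduces to showing that each such $\kk[Q_+]_S$ coincides with $\kk[Q_+]_F$ for a suitable face $F$ of $Q_+$.

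First I would choose $F$ to be the smallest face of $Q_+$ containing $S$, which is well defined because an affine semigroup has only finitely many faces and faces are closed under intersection. The inclusion $\kk[Q_+]_S \subseteq \kk[Q_+]_F$ is immediate from $S \subseteq F$. For the reverse inclusion I would exhibit an element $s^\ast \in S$ whose support face is exactly $F$, since then Proposition~\ref{p:localization} applied to $m = \xx^{s^\ast}$ yields $\kk[Q_+]_F = \kk[Q_+][m^{-1}] \subseteq \kk[Q_+]_S$. To produce $s^\ast$, for each proper face $F' \subsetneq F$ (of which there are finitely many) the minimality of $F$ supplies an element $s_{F'} \in S \minus F'$; then $s^\ast := \sum_{F' \subsetneq F} s_{F'}$ lies in $S$ by closure under addition. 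If the support face $F^\ast$ of $s^\ast$ were a proper subface of $F$, then $s_{F^\ast}$ would occur as a summand of $s^\ast \in F^\ast$, and the face property ``$a + b \in F^\ast \implies a, b \in F^\ast$'' would force $s_{F^\ast} \in F^\ast$, contradicting $s_{F^\ast} \in S \minus F^\ast$. Hence $F^\ast = F$, completing the identification.

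The only nonroutine step is the construction of $s^\ast$ and the contrapositive face-property argument verifying its support face; everything else is an unwinding of Definitions~\ref{d:localization} and~\ref{d:X-flat} together with Proposition~\ref{p:localization}. The hypotheses that all faces of $Q_+$ are class~$\XX$ and Minkowski summable, and that all translates of $Q_+$ are class~$\XX$, are not used in the structural identification itself but ensure that the rewritten summands $\kk[Q_+]_F$ remain of class~$\XX$ (the support $Q_+ - F$ is $Q_+$ Minkowski-summed with the class~$\XX$ set $-F$), so the refined direct sum is a genuine $\XX$-flat decomposition in the form asserted.
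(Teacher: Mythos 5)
Your proof is correct and takes essentially the same approach as the paper, whose proof simply cites Definition~\ref{d:X-flat}, Proposition~\ref{p:localization}, and the finiteness of the set of faces of~$Q_+$. You spell out the step the paper treats as immediate, namely that for any additively closed $S \subseteq Q_+$ there is a single element $s^\ast \in S$ whose support face is the smallest face~$F$ containing~$S$, so that $\kk[Q_+]_S = \kk[Q_+]_F$; your closing remark that the class~$\XX$ and Minkowski-summability hypotheses safeguard the class~$\XX$ status of the rewritten summands is actually vacuous, since Definition~\ref{d:X-flat} already stipulates that each summand is class~$\XX$ and your identification is an isomorphism of the very same module.
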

\begin{proof}
Immediate from Definition~\ref{d:X-flat},
Proposition~\ref{p:localization}, and the fact that $Q_+$ has only
finitely many faces.
\end{proof}

\begin{example}\label{e:missing-faces}
The monoid $A$ depicted in Example~\ref{e:non-fg} is ``missing'' two
faces, namely the $x$- and $y$-axes.  Taking $Q_+ = A$, the
$Q$-modules of any class~$\XX$ are closed under localization.  Indeed,
$Q_+$ has only two faces: the origin and $Q_+$ itself.  Localizing
along the origin is the identity operation.  And $Q_+$ itself is
always Minkowski summable by Definition~\ref{d:class-X}.  There is no
way to localize along either of the missing axes.
\end{example}

\section{Functors on class \texorpdfstring{$\XX$}{X} modules}\label{s:functors}

The stage is set to specify hypotheses guaranteeing that various
familiar homological functors preserve constructibility
(Definitions~\ref{d:constructible} and~\ref{d:morphism-tame}), or
generally any class~$\XX$.

As usual, fix and field~$\kk$ and class~$\XX$ group~$Q$ as in
Convention~\ref{conv:conventions}.

\subsection{Local cohomology}\label{b:local-cohomology}

\begin{defn}\label{d:local-cohomology}
The \emph{local cohomology} of a $Q$-module~$M$ \emph{supported on} a
monomial ideal $I = \<m_1, \dots, m_r\> \subseteq R = \kk[Q_+]$ is the cohomology
of $M \otimes \vC^\spot_I$, where
$$
  \vC^\spot_I:
  0
  \to
  R
  \to
  \bigoplus_{i=1}^n R[m_i^{-1}]
  \to
  \cdots
  \to\!\!\!\!
  \bigoplus_{i_1 < \dots < i_\ell} \!\!\!\! R[m_{i_1}^{-1} \cdots m_{i_\ell}^{-1}]
  \to
  \cdots
  \to
  R[m_1^{-1} \cdots m_r^{-1}]
  \to
  0
$$
is the \emph{\v Cech complex} of $m_1,\dots,m_r$.  Write
$\vC^\spot_I(M) = M \otimes \vC^\spot_I$.
\end{defn}

\begin{thm}\label{t:cech}
Fix a monomial ideal $I = \<m_1, \dots, m_r\>$ in the ring $R = \kk[Q_+]$,
with $m_i = \xx^{\qq_i}$ for all~$i$.  If a $Q$-module $M$ is of
class~$\XX$ and, for each~$i$, the semigroup generated by $\qq_i$ is
Minkowski summable, then the \v Cech complex~$\vC^\spot_I(M)$ is of
class~$\XX$, as is the cohomology~$H^\spot_I(M)$ of~$\vC^\spot_I(M)$.
\end{thm}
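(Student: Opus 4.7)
The plan is to recognize $\vC^\spot_I$ as an $\XX$-flat complex in the sense of Corollary~\ref{c:X-flat-complex}, and then invoke Theorem~\ref{t:X-flat} with input~$M$ to obtain both conclusions at once.

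First, I would identify each term of the \v Cech complex as a direct sum of localizations of $R = \kk[Q_+]$. The summand $R[m_{i_1}^{-1} \cdots m_{i_\ell}^{-1}]$ is exactly the localization $R_S$ of $R$ along the submonoid
\[
  S = S_{i_1, \ldots, i_\ell} := \NN\qq_{i_1} + \cdots + \NN\qq_{i_\ell} \subseteq Q_+.
\]
To apply Lemma~\ref{l:summable} and conclude $R_S$ is of class~$\XX$, two things are required: that $R$ itself is of class~$\XX$, and that $S$ is Minkowski summable. The first holds because $Q_+$ is of class~$\XX$ (by definition of class~$\XX$ group), so $\kk\{Q_+\} = \kk[Q_+]$ is of class~$\XX$ via the two-region subdivision $\{Q_+, Q \setminus Q_+\}$, with $Q \setminus Q_+$ of class~$\XX$ by closure under complements in Definition~\ref{d:class-X}.

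Second, I would verify that each $S_{i_1, \ldots, i_\ell}$ is Minkowski summable by a short induction on~$\ell$. The base case $\ell = 1$ is the hypothesis, and $\ell = 0$ gives $S = \{0\}$, which is trivially Minkowski summable. For the inductive step, if $S_1$ and $S_2$ are Minkowski summable (hence of class~$\XX$) and $T$ is any class~$\XX$ set, then applying Minkowski summability of $S_2$ gives $S_2 + T$ of class~$\XX$, and then applying Minkowski summability of $S_1$ gives
\[
  (S_1 + S_2) + T \;=\; S_1 + (S_2 + T)
\]
of class~$\XX$. Taking $T = \{0\}$ also shows $S_1 + S_2$ itself is of class~$\XX$. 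Hence $S_{i_1, \ldots, i_\ell}$ is Minkowski summable and $R_{S_{i_1, \ldots, i_\ell}}$ is of class~$\XX$ by Lemma~\ref{l:summable}, so each term of $\vC^\spot_I$ is a finite direct sum of class~$\XX$ translated localizations of $\kk[Q_+]$, i.e., is $\XX$-flat in the sense of Definition~\ref{d:X-flat}.

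Finally, Corollary~\ref{c:X-flat-complex} upgrades this to the statement that $\vC^\spot_I$ is a complex of class~$\XX$, and Theorem~\ref{t:X-flat} then yields that $\vC^\spot_I(M) = \vC^\spot_I \otimes_Q M$ is of class~$\XX$ together with its cohomology $H^\spot_I(M)$. The only step with any substance is the iterated Minkowski summability, which unwinds directly from Definition~\ref{d:summable}; everything else is assembly of results already in~place, so I do not anticipate any essential obstacle.
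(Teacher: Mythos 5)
Your proposal is correct and follows the paper's proof exactly: recognize the \v Cech complex as an $\XX$-flat complex and invoke Theorem~\ref{t:X-flat}, which handles both $\vC^\spot_I(M)$ and its cohomology in one stroke. The paper's one-line justification of $\XX$-flatness cites Lemma~\ref{l:summable} directly without spelling out why the semigroup $\NN\qq_{i_1}+\cdots+\NN\qq_{i_\ell}$ is Minkowski summable when only the individual $\NN\qq_i$ are hypothesized to be; your short induction (Minkowski summability is closed under Minkowski sum, since $(S_1+S_2)+T = S_1+(S_2+T)$) supplies exactly the detail the paper's terser argument leaves implicit.
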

\begin{proof}
The localizations in~$\vC^\spot_I$ are of class~$\XX$ by
Lemma~\ref{l:summable}.  Hence $\vC^\spot_I$ is of class~$\XX$ by
Proposition~\ref{p:Cech-tame}.  The theorem is therefore a special
case of Theorem~\ref{t:X-flat}.
\end{proof}

\begin{remark}\label{r:always-satisfied}
The Minkowski summable hypothesis of Theorem~\ref{t:cech} is always
satisfied when $\XX = $ semialgebraic, PL, or semisimple, or when
$\XX$ is the family of all subsets~of~$Q$.
\end{remark}

\subsection{Tensor products and Tor}\label{b:tensor-products}

\begin{example}\label{e:tor-not-tame}
Localizing along a Minkowski summable set is a particular case of
tensor product.  General tensor products do not preserve class~$\XX$
or even tameness of any sort.  An example is the indicator
$\ZZ^2$-module $\kk\{L\}$ supported on the antidiagonal line $L =
\{(x,y) \mid y = -x\}$.  The tensor product of this module with itself
over $\kk[\NN^2]$ is
$$
  \kk\{L\} \otimes \kk\{L\}
  \cong
  \bigoplus_{\aa \in L} \kk\{L\},
$$
so for each~$\aa \in L$, the vector space $\bigl(\kk\{L\} \otimes
\kk\{L\}\bigr)_\aa$ in degree $\aa = (a, -a)$ is the direct sum
$\bigoplus_{i \in \ZZ} \kk\{\aa+\ii\} \otimes \kk\{-\ii\}$ of tensor
products of the $1$-dimensional vector spaces at $\aa + \ii = (a+i,
-a-i)$ and~$-\ii = (-i, i)$.

This phenomenon recurs along the antidiagonal~$L$ in the tensor
product $\kk\{U\} \otimes \kk\{U\}$, where $U = L + \NN^2$ is the
upset generated by~$L$.  Thus tensor products of individual upset
modules with individual upset modules need not be tame, let alone
class~$\XX$ for any particular~$\XX$.  Note that these examples of
$\kk\{L\}$ and $\kk\{U\}$ are constructible modules.
\end{example}

Example~\ref{e:tor-not-tame} notwithstanding, tensor products or
$\Tor$ in which one of the input modules is class~$\XX$ and the other
is noetherian still result in class~$\XX$ output.  In fact, more is
true: it is enough to demand that one of the modules has class~$\XX$
flat resolution.

\begin{defn}\label{d:tensorQ}
Henceforth $\otimes_Q$ and $\Tor_i^Q$ denote $\otimes_{\kk[Q_+]}$ and
$\Tor_i^{\kk[Q_+]}$, respectively.
\end{defn}

\begin{thm}\label{t:tor}
Fix a class~$\XX$ group~$Q$.  If $M$ is any $Q$-module of class~$\XX$
and $N$ is a $Q$-module admitting a flat resolution $C_\spot$ that is
$\XX$-flat in homological degrees $i - 1$, $i$, and $i + 1$, then
$\Tor_i^Q(M,N)$ is of class~$\XX$.
\end{thm}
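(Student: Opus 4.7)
The plan is to compute $\Tor_i^Q(M,N)$ as the middle homology of the three-term truncation of $C_\spot$ around homological degree~$i$ --- the range in which $\XX$-flatness is available --- and then invoke Theorem~\ref{t:X-flat} on that truncation. The key observation is that $\Tor_i^Q(M,N) = H_i(C_\spot \otimes_Q M)$, and this homology at a single position depends only on the two adjacent differentials, so only the three terms around position~$i$ matter.

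First I would form the truncated subcomplex $C'_\spot: C_{i+1} \to C_i \to C_{i-1}$, regarded as a chain complex concentrated in homological degrees $i+1$, $i$, and $i-1$. By hypothesis, each of $C_{i+1}$, $C_i$, and $C_{i-1}$ is $\XX$-flat, so $C'_\spot$ is an $\XX$-flat complex in the sense of Corollary~\ref{c:X-flat-complex}. That corollary automatically upgrades this to the statement that $C'_\spot$ is a complex of class~$\XX$, meaning that its two differentials are class~$\XX$ morphisms, not merely that its objects are class~$\XX$ modules. Note that this upgrade for morphisms is the content of Corollary~\ref{c:X-flat-complex}: any map between indecomposable upset modules is determined by a scalar and an inclusion of upsets, so maps between $\XX$-flat summands are localization maps to which Proposition~\ref{p:Cech-tame} applies.

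Next I would apply Theorem~\ref{t:X-flat} to $C'_\spot$ and the class~$\XX$ module~$M$. The theorem delivers that $C'_\spot \otimes_Q M$ is a complex of class~$\XX$ and that its homology is of class~$\XX$ at every position; in particular, $H_i(C'_\spot \otimes_Q M)$ is a class~$\XX$ $Q$-module. Finally I would identify this middle homology with $\Tor_i^Q(M,N)$: since $C_\spot$ is a flat resolution of~$N$, the standard derivation of $\Tor$ from flat resolutions gives $\Tor_i^Q(M,N) = H_i(C_\spot \otimes_Q M)$, and because homology at position~$i$ is computed solely from the two maps adjacent to position~$i$, this coincides with $H_i(C'_\spot \otimes_Q M)$.

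The main obstacle, which is quite mild given the earlier machinery, is ensuring that the three-term truncation really does qualify as an $\XX$-flat complex in the full sense required by Theorem~\ref{t:X-flat} --- that is, that its differentials and not only its objects are of class~$\XX$. Corollary~\ref{c:X-flat-complex} was designed precisely to handle this automatically, so no extra work is needed beyond citing it. The only other conceptual point is the basic homological algebra fact that $\Tor$ may be computed from any flat resolution, which is standard.
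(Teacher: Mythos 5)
Your proposal is correct and takes essentially the same approach as the paper: the paper's one-sentence proof also computes $\Tor_i^Q(M,N)$ as the middle homology of the three-term complex $M \otimes_Q C_{i-1} \from M \otimes_Q C_i \from M \otimes_Q C_{i+1}$ and cites Theorem~\ref{t:X-flat}. Your write-up simply makes explicit the role of Corollary~\ref{c:X-flat-complex} in upgrading the differentials of the truncation to class~$\XX$ morphisms, a step the paper leaves implicit inside the hypothesis of Theorem~\ref{t:X-flat}.
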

\begin{proof}
$\Tor_i^Q(M,N)$ can be computed as the middle homology of the
three-term complex $M \otimes_Q C_{i-1} \from M \otimes_Q C_i \from M
\otimes_Q C_{i+1}$, which is of class~$\XX$ by Theorem~\ref{t:X-flat}.
\end{proof}

\begin{remark}\label{r:noetherian-assump}
Assume $Q_+$ affine semigroup and that all translates of~$Q_+$ are
class~$\XX$.  In Theorem~\ref{t:tor}, if $N$ is noetherian, then any
free resolution with finite rank in each homological degree is
$\XX$-flat in all homological degrees.  Assume further that every face
of $Q_+$ is class $\XX$ and Minkowski summable.  If $N$ is artinian,
then its Matlis dual $N^\vee$ (Definition~\ref{d:matlis}) is
noetherian, and so by \cite[Theorem~11.30]{cca} $N^\vee$ has an
injective resolution whose modules are Matlis duals of
\mbox{$\XX$-flat} modules (see also Remark~\ref{r:injectives}).
Taking Matlis dual of such resolution and using
Lemma~\ref{l:vee-preserves-X} proves that any artinian $N$ admits a
flat resolution $C_\spot$ that is $\XX$-flat in all homological
degrees.  Note that all these assumptions on $Q_+$ are satisfied when
$\XX = \text{semisimple}$.
\end{remark}

\subsection{Hom and Ext}\label{b:hom-and-ext}\mbox{}

\medskip
\noindent 
The results concerning flat complexes and $\Tor$ perhaps surprisingly
can be massaged to work for $\Ext$, as well, despite $\Ext$ being
asymmetric in its two arguments.  The main tool is Matlis duality,
which behaves extremely well when the relevant vector space dimensions
are all finite, as is the case when constructibility (or, more
generally, class~$\XX$) is involved.  The following review of Matlis
duality is based on \cite[\S11.3]{cca}.

\begin{defn}\label{d:matlis}
The \emph{Matlis dual} of a $Q$-module~$M$ is the $Q$-module~$M^\vee$
defined by
$$
  (M^\vee)_\qq = \Hom_\kk(M_{-\qq},\kk),
$$
so the homomorphism $(M^\vee)_\qq \to (M^\vee)_{\qq'}$ is transpose to
$M_{-\qq'} \to M_{-\qq}$.
\end{defn}

The degree-by-degree formula for~$M^\vee$ becomes more transparent
when all degrees are considered simultaneously via the following
standard notion of graded~$\Hom$.

\begin{defn}\label{d:hhom}
Fix a $Q$-graded ring~$R$, such as~$\kk$ (concentrated in degree~$\0$)
or~$\kk[Q_+]$.  For any $Q$-graded $R$-modules~$M$ and~$N$, set
$$
\hhom_R(M,N)_\qq
=
\hbox{ degree $\qq$ homogeneous $R$-homomorphisms }M \to N
$$
in which
$M_\aa \to N_{\aa + \qq}$ for all~$\aa$.  The \emph{graded~Hom} is
$$
\hhom_R(M,N) = \bigoplus_{\qq \in Q} \hhom_R(M,N)_\qq
$$
defined as a $Q$-graded $\kk[Q_+]$-module by $(\xx^\qq \phi)(m) = \phi(\xx^\qq m)$ for
$\phi \in \hhom_R(M,N)$.
\end{defn}

\begin{defn}\label{d:homQ}
Write $\Hom_Q$ and $\Ext^i_Q$ to mean $\Hom_{\kk[Q_+]}$ and
$\Ext^i_{\kk[Q_+]}$, respectively.
\end{defn}

\begin{prop}\label{p:vee}
If $F_\spot$ is any complex of $Q$-modules with Matlis dual $I^\spot =
F_\spot^\vee$, and $C_\spot$ is any complex of $Q$-modules, then
canonically
$$
\hhom_Q(C_\spot, I^\spot)
\cong
(C_\spot \otimes_Q F_\spot)^\vee.
$$
\end{prop}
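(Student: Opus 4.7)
The plan is to reduce the proposition to the standard Hom-tensor adjunction, using the observation that the Matlis dual is itself a graded Hom. Specifically, regarding $\kk$ as a $Q$-graded algebra concentrated in degree~$\0$, one sees from Definition~\ref{d:hhom} that $N^\vee = \hhom_\kk(N, \kk)$ for any $Q$-module~$N$: a homogeneous $\kk$-linear map $N \to \kk$ of degree~$\qq$ must send $N_\aa$ into $\kk_{\aa+\qq}$, which vanishes unless $\aa = -\qq$, recovering Definition~\ref{d:matlis}. The proposition then follows from the graded Hom-tensor adjunction
$$
\hhom_{\kk[Q_+]}\bigl(C_\spot,\, \hhom_\kk(F_\spot, \kk)\bigr)
\;\cong\;
\hhom_\kk\bigl(C_\spot \otimes_{\kk[Q_+]} F_\spot,\, \kk\bigr).
$$

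First I would verify the isomorphism at the level of individual modules. Given $Q$-modules $C$ and $F$, a homogeneous element $\phi \in \hhom_Q(C, F^\vee)_\qq$ is a collection of $\kk$-linear maps $\phi_\aa \colon C_\aa \to \Hom_\kk(F_{-\aa-\qq}, \kk)$ intertwining the $\kk[Q_+]$-action. Unpacking, these package as $\kk$-bilinear pairings $C_\aa \otimes_\kk F_{-\aa-\qq} \to \kk$ satisfying the compatibility $\phi_{\aa+\rr}(\xx^\rr c)(f) = \phi_\aa(c)(\xx^\rr f)$ for every $\rr \in Q_+$. But this is precisely the relation defining the tensor product $C \otimes_{\kk[Q_+]} F$ in degree~$-\qq$, so such data are in natural bijection with $\kk$-linear maps $(C \otimes_Q F)_{-\qq} \to \kk$, that is, with elements of $\bigl((C \otimes_Q F)^\vee\bigr)_\qq$. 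The assignment is manifestly functorial in both~$C$ and~$F$.

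Next I would extend to complexes by taking total complexes of the evident double complexes, with $(i,j)$-bifix $\hhom_Q(C_i, (F_j)^\vee) \cong (C_i \otimes_Q F_j)^\vee$ given by the previous step. Naturality of the module-level isomorphism in each variable separately guarantees commutation with each of the two bifixed differentials. The main bookkeeping obstacle is the sign and indexing conventions: Matlis duality turns a chain complex into a cochain complex, reversing the direction of one differential and introducing the usual Koszul signs, so one must verify that the total differential on $\hhom_Q(C_\spot, I^\spot)$, namely $d\phi = d_{I^\spot} \circ \phi - (-1)^{|\phi|} \phi \circ d_{C_\spot}$, corresponds under $\vee$ to the total differential on $C_\spot \otimes_Q F_\spot$ given by $d(c \otimes f) = d(c) \otimes f + (-1)^{|c|} c \otimes d(f)$. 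Because Matlis duality is a contravariant exact functor and the sign conventions on the two sides are the standard ones, this compatibility reduces to routine sign-chasing.
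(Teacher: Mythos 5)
Your proof is correct and takes essentially the same approach as the paper, which simply invokes the module-level Hom--tensor adjunction via Matlis duality (citing \cite[Lemma~11.16]{cca}) and extends it to complexes by functoriality of $\hhom$ and $\otimes$. You spell out the degree-by-degree adjunction explicitly and are more careful about the total-complex sign conventions, but the underlying argument is the same.
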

\begin{proof}
This is \cite[Lemma~11.16]{cca}, which is stated for modules but works
as well for complexes by functoriality of~$\hhom$ and~$\otimes$.  Also note
that the proof there---and~hence the statement here---is a simple abstract
adjunction with no noetherian requirement.\!\!
\end{proof}

\begin{cor}\label{c:vee}
For any $Q$-modules~$M$ and~$N$, there is a natural isomorphism
$$
  \eext^i_Q(M, N^\vee) = \Tor_i^Q(M, N)^\vee.
$$
\end{cor}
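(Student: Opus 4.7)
The plan is to reduce the statement directly to Proposition~\ref{p:vee} by applying it to a projective resolution of~$M$, then using exactness of Matlis duality to commute it past the cohomology.

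First I would fix a projective (for instance, free) resolution $P_\spot \to M$ of the $Q$-module~$M$. By definition,
$$
\eext^i_Q(M, N^\vee) = H^i\bigl(\hhom_Q(P_\spot, N^\vee)\bigr)
\quad\text{and}\quad
\Tor_i^Q(M,N) = H_i(P_\spot \otimes_Q N).
$$
Next I would apply Proposition~\ref{p:vee} to the complex~$C_\spot = P_\spot$ and to $F_\spot = N$ concentrated in homological degree~$0$ (so that $I^\spot = N^\vee$ is the Matlis dual, also concentrated in degree~$0$). This yields a canonical isomorphism of complexes
$$
\hhom_Q(P_\spot, N^\vee) \;\cong\; \bigl(P_\spot \otimes_Q N\bigr)^\vee.
$$

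The final step is to commute $H^i$ past the Matlis dual on the right-hand side. Since $\kk$ is a field, the functor $(-)^\vee = \Hom_\kk(-,\kk)$ is exact when applied degree-by-degree, so
$$
H^i\bigl((P_\spot \otimes_Q N)^\vee\bigr)
\;\cong\; \bigl(H_i(P_\spot \otimes_Q N)\bigr)^\vee
\;=\; \Tor_i^Q(M,N)^\vee.
$$
Combining with the previous display and the definition of~$\eext^i_Q$ gives the desired natural isomorphism.

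There is no real obstacle: the only thing worth being careful about is that the isomorphism of Proposition~\ref{p:vee} is natural in~$C_\spot$, so it produces a chain isomorphism, not just a sequence of isomorphisms in each degree; this is immediate from the statement there, which treats complexes via functoriality of $\hhom$ and~$\otimes$. Naturality in~$M$ and~$N$ then follows because the construction is independent (up to canonical chain homotopy) of the choice of~$P_\spot$.
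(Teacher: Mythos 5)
Your proof is correct, and it is the natural ``dual'' of the route the paper takes. The paper resolves the \emph{second} argument: it takes a flat resolution $F_\spot$ of~$N$, forms the injective resolution $I^\spot = F_\spot^\vee$ of~$N^\vee$ (using Remark~\ref{r:matlisexact} that Matlis duality swaps flat and injective), computes $\eext^i_Q(M,N^\vee)$ as $H^i\hhom_Q(M,I^\spot)$, rewrites this via Proposition~\ref{p:vee} with $C_\spot = M$ in degree~$0$, and then pulls the exact functor $(-)^\vee$ past the homology. You instead resolve the \emph{first} argument projectively, apply Proposition~\ref{p:vee} with $C_\spot = P_\spot$ and $F_\spot = N$ in degree~$0$, and then pull $(-)^\vee$ past $H^i$. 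Both reduce to the same adjunction and the same exactness observation. Your version has the small advantage of not needing the paper's remark that $(F_\spot)^\vee$ is an injective resolution, while the paper's version avoids the (true but unremarked) fact that the $Q$-graded module category has enough projectives. Either way the key steps---Proposition~\ref{p:vee} followed by exactness of degree-by-degree $\kk$-duality---are identical, so the choice of which side to resolve is essentially cosmetic.
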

\begin{proof}
Apply Proposition~\ref{p:vee} to flat resolutions $C_\spot$ and~$F_\spot$
of~$M$ and~$N$, respectively, so the Matlis dual $I^\spot = (F_\spot)^\vee$ is an
injecive rsolution of~$N^\vee$.  Actually, it suffices to let $F_\spot$ be a
flat resolution of~$N$:
\begin{align*}
\eext^i_Q(M,N^\vee)&
  = H^i\hhom_Q(M,I^\spot)
\\*&
  = H^i\hhom_Q\bigl(M,\hhom_\kk(F_\spot, \kk)\bigr)
\\*&
  = H^i\hhom_\kk\bigl(M \otimes_Q F_\spot, \kk)\bigr)
\\*&
  = \hhom_\kk\bigl(H_i(M \otimes_Q F_\spot), \kk)\bigr)
\\*&
  = H_i(M \otimes_Q F_\spot)^\vee
\\*&
  = \Tor_i^Q(M, N)^\vee.
\qedhere
\end{align*}
\end{proof}

\begin{lemma}\label{l:vee-preserves-X}
Matlis duality preserves class~$\XX$: if $M$ is of class~$\XX$ then so
is~$M^\vee$.  Moreover, in that case, $(M^\vee)^\vee \cong M$, as in any case where
$\dim_\kk M_\qq < \infty$ for all~$q \in Q$.
\end{lemma}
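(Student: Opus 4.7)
The plan is to extract a constant subdivision subordinate to~$M^\vee$ from a constant subdivision subordinate to~$M$ by negating each region, then handle the double-dual claim separately at the level of individual degrees using the standard finite-dimensional fact. The degree-by-degree computation $\dim_\kk(M^\vee)_\qq = \dim_\kk\Hom_\kk(M_{-\qq},\kk) = \dim_\kk M_{-\qq}$ already disposes of finiteness of all graded pieces.

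First I would invoke the class~$\XX$ hypothesis on~$M$ to obtain a finite constant subdivision $Q = I_1 \sqcup \cdots \sqcup I_r$ with each $I_k$ in the family~$\XX$, together with representative vector spaces $M_{I_k}$ and the isomorphisms $M_{I_k} \to M_\qq$ for $\qq \in I_k$ witnessing subordination. Because class~$\XX$ is closed under negation (Definition~\ref{d:class-X}), the partition $Q = (-I_1) \sqcup \cdots \sqcup (-I_r)$ consists of class~$\XX$ regions. For the region~$-I_k$ set $(M^\vee)_{-I_k} := \Hom_\kk(M_{I_k},\kk)$, and for each $-\qq \in -I_k$ (so $\qq \in I_k$) take the isomorphism $(M^\vee)_{-I_k} \to (M^\vee)_{-\qq}$ dual to the isomorphism $M_{I_k} \to M_\qq$; this uses only that $\dim_\kk M_\qq < \infty$.

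Next I would verify no monodromy. If $-\qq \preceq -\qq'$ with $-\qq \in -I_k$ and $-\qq' \in -I_\ell$, then $\qq' \preceq \qq$ with $\qq' \in I_\ell$ and $\qq \in I_k$, and by Definition~\ref{d:matlis} the structure map $(M^\vee)_{-\qq} \to (M^\vee)_{-\qq'}$ is the transpose of $M_{\qq'} \to M_\qq$. Hence the composite
\[
  (M^\vee)_{-I_k} \to (M^\vee)_{-\qq} \to (M^\vee)_{-\qq'} \to (M^\vee)_{-I_\ell}
\]
is the transpose of
\[
  M_{I_\ell} \to M_{\qq'} \to M_\qq \to M_{I_k},
\]
which depends only on $\ell$ and~$k$ by the no-monodromy hypothesis subordinate to~$M$. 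Taking transposes preserves this independence, so the negated subdivision is subordinate to~$M^\vee$, establishing that $M^\vee$ is of class~$\XX$.

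For the reflexivity claim, in each degree I would use the natural evaluation map $M_\qq \to \Hom_\kk(\Hom_\kk(M_\qq,\kk),\kk) = ((M^\vee)^\vee)_\qq$, which is an isomorphism whenever $\dim_\kk M_\qq < \infty$. Naturality of evaluation in finite-dimensional vector spaces ensures compatibility with the structure maps of~$M$ and $(M^\vee)^\vee$, giving the required isomorphism of $Q$-modules. The only potential obstacle is bookkeeping the sign reversal induced by $\qq \mapsto -\qq$ in Definition~\ref{d:matlis}, together with the direction reversal of structure maps under transpose; once these are aligned, no part of the argument is deep.
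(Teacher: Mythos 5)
Your proposal is correct and takes essentially the same approach as the paper: negate each region of a constant subdivision subordinate to~$M$, invoke closure of the family~$\XX$ under negation (Definition~\ref{d:class-X}) to conclude the negated regions remain class~$\XX$, and apply naturality of the finite-dimensional double-dual isomorphism for the reflexivity claim. You spell out the no-monodromy verification and the direction-reversal bookkeeping more explicitly than the paper's two-sentence proof does, but the underlying argument is identical.
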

\begin{proof}
If a subdivision is subordinate to~$M$, then taking the negative
(in~$Q$) of each region in the subdivision yields a subdivision
subordinate to~$M^\vee$ that is class~$\XX$ by the negation clause in
Definition~\ref{d:tame}.  The claim about $(M^\vee)^\vee$ follows from
the fact that any vector space of finite dimension is naturally
isomorphic to its double dual; note that class~$\XX$ implies tame
(Definition~\ref{d:tame}), which includes this finite dimensionality.
\end{proof}

\begin{remark}\label{r:matlisexact}
Matlis duality is exact (it is vector space duality, degree by degree)
and it swaps flat and injective modules \cite[Lemma~11.23]{cca}.  Note
that $\kk[Q_+]$ does not need to be noetherian for the brief
adjunction argument there.
\end{remark}

The following lemma gives a structural result about tame injective
modules, or more generally (Remark~\ref{r:tame-vs-class-X}) injective
modules of class~$\XX$.

\begin{lemma}\label{l:injective-decompose}
Every class~$\XX$ injective $Q$-module $I$ is a finite direct sum of
class~$\XX$ indecomposable injective modules.
\end{lemma}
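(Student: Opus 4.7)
The plan is to decompose $I$ via Krull--Remak--Schmidt--Azumaya and verify that the resulting summands are (i) injective, (ii) of class~$\XX$, and (iii) finite in number, the last being the main obstacle.

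First, by Remark~\ref{r:semisimple-not-rep-theor} I would write $I = \bigoplus_{j \in J} I_j$ as a direct sum of indecomposable $Q$-modules. Each $I_j$ is then injective because direct summands of injective modules are injective. To see each $I_j$ is of class~$\XX$, I would take a finite class~$\XX$ constant subdivision $\{R_\alpha\}$ subordinate to $I$ with constant values $V_\alpha$, and observe that the $Q$-module splitting $I_\qq = \bigoplus_j (I_j)_\qq$ induces a splitting $V_\alpha = \bigoplus_j V_\alpha^{(j)}$; the no-monodromy condition restricts to each summand because direct-sum projections commute with every structure map. Hence the very same subdivision $\{R_\alpha\}$ is subordinate to each $I_j$, so each $I_j$ is of class~$\XX$.

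The main obstacle is showing $|J|<\infty$. My plan is to pass to the flat side via Matlis duality: by Lemma~\ref{l:vee-preserves-X} and Remark~\ref{r:matlisexact}, $F := I^\vee$ is a class~$\XX$ flat $Q$-module, and each $F_j := I_j^\vee$ is an indecomposable class~$\XX$ flat summand of $F$ (Matlis duality preserves direct-sum decompositions since the double dual recovers $I$ by the finite-dimensionality inherent in class~$\XX$). Under the running hypotheses on $Q_+$ (cf.\ Corollary~\ref{c:face-X-flat} and the surrounding theory), indecomposable class~$\XX$ flat $Q$-modules are $Q$-translates of localizations of $\kk[Q_+]$ along faces of $Q_+$. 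Since $Q_+$ has only finitely many faces, only finitely many face-types occur; within each face-type the multiplicity of any given translate is bounded by the values $\dim_\kk F_\qq<\infty$ supplied by tameness; and the finite constant subdivision of $F$ forces only finitely many distinct translates, because each new translate would contribute a fresh boundary in the support pattern that must be compatible with the finitely many regions~$R_\alpha$.

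Finally, Matlis dualizing back, $I \cong F^\vee = \bigoplus_{j \in J} F_j^\vee$ with $|J|<\infty$, and each $F_j^\vee$ is an indecomposable class~$\XX$ injective $Q$-module, as required. The single step that demands real work is the structural input ``indecomposable class~$\XX$ flat $=$ translate of a localization along a face'' feeding the finiteness of~$|J|$; everything else is routine bookkeeping once Matlis duality (Lemma~\ref{l:vee-preserves-X}, Remark~\ref{r:matlisexact}) and subdivision restriction are in place.
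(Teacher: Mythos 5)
Your decomposition via Krull--Remak--Schmidt--Azumaya, the observation that summands of injectives are injective, and the restriction of a subordinate subdivision to each summand all match the paper's proof (the last being exactly Lemma~\ref{l:indecomp-class-X}). The problem is the finiteness step.

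Your finiteness argument hinges on the structural claim that an indecomposable class~$\XX$ flat $Q$-module is a $Q$-translate of a localization of $\kk[Q_+]$ along a face. That claim is simply false at the level of generality at which the lemma is stated, and the paper itself warns about it: Example~\ref{e:X-injective} exhibits, for $Q = \RR^n$ with $Q_+ = (\RR_{\ge 0})^n$, the upset module $\kk[U]$ for $U = (\RR_{>0})^n$, which is flat and indecomposable but not a localization of $\kk[Q_+]$. You invoke Corollary~\ref{c:face-X-flat} under ``the running hypotheses on $Q_+$,'' but that corollary requires $Q_+$ to be an affine semigroup with further conditions, none of which appear in the lemma's hypotheses---the lemma is for an arbitrary class~$\XX$ group. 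Even granting the structure theorem, the final step (``each new translate would contribute a fresh boundary \ldots that must be compatible with the finitely many regions'') is a heuristic, not a proof.

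The paper's route to finiteness is more elementary and dodges the structure question entirely: for each indecomposable summand $I_\lambda$, pick a nonzero $a \in (I_\lambda)_{\qq_0}$ and extend the resulting map $\kk\{\qq_0 + Q_+\} \to I_\lambda$ through $\kk\{Q\}$ using injectivity; the image is a nonzero downset module sitting inside $I_\lambda$. Any finite collection of downsets in a partially ordered abelian group has nonempty common intersection (lower-connectedness), so if there were infinitely many summands, some degree $\qq$ would see arbitrarily many nonzero contributions, contradicting the uniform bound on $\dim_\kk I_\qq$ coming from the finite constant subdivision. That argument uses only injectivity, tameness, and the poset structure, and in particular does not require $Q_+$ to be an affine semigroup or indecomposable flats to be face localizations.
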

\begin{proof}
Since $I$ is tame by Definition~\ref{d:tame}, $\dim_\kk M_\qq$ is
finite for every $\qq \in Q$ also by Definition~\ref{d:tame}.
Therefore, the Krull--Remak--Schmidt--Azumaya theorem
\cite[Theorem~1.1]{botnan-crawley-boevey2020} yields a decomposition
$I = \bigoplus_{\lambda\in\Lambda} I_\lambda$ with each $I_\lambda$ an
indecomposable $Q$-module.  Being a direct summand of an injective
module, each $I_\lambda$ is also injective.  For the first claim, it
remains is to show that $\Lambda$ is finite.

Fix $I_\lambda$ for some $\lambda \in \Lambda$ and a non-zero element
$a \in (I_\lambda)_{\qq_0}$ for some $\qq_0 \in Q$.  Multiplication
by~$\xx^{\qq_0}$ induces a degree~$\0$ homomorphism $\kk\{\qq_0+Q_+\}
\to I_\lambda$ that extends to a degree~$\0$ homomorphism $\phi:
\kk\{Q\} \to I_\lambda$ because $I_\lambda$ is injective.  Since the
image of~$\phi$ is a quotient of~$\kk\{Q\}$, it is a downset module
$\kk\{D_\lambda\} \cong \image(\phi) \subseteq I_\lambda$.  But every
finite collection of downsets in~$Q$ has nonempty intersection (this
is elementary to prove directly; partially ordered abelian groups are
``lower-connected'' in the language of
\cite[Definition~3.5.3]{hom-alg-poset-mods}) and $\dim_\kk M_\qq$ is
bounded thanks to any subordinate constant subdivision which has only
finitely many constant regions.  It follows that $\Lambda$~is~finite.

The second claim is by Lemma~\ref{l:indecomp-class-X}, which is
separated off for the record.
\end{proof}

\begin{lemma}\label{l:indecomp-class-X}
Assume that a $Q$-module $M \cong \bigoplus_{\lambda\in\Lambda}
M_\lambda$ is decomposable as a direct sum.  If $M$ is tame or of
class~$\XX$, then so is $M_\lambda$ for each index~$\lambda$.
\end{lemma}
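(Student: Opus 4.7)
The plan is to show that a single finite constant subdivision $\{I_\alpha\}_{\alpha \in A}$ of~$Q$ subordinate to~$M$, with abstract vector spaces $M_{I_\alpha}$ and isomorphisms $\phi_\ii \colon M_{I_\alpha} \to M_\ii$, can be promoted---after a class~$\XX$ refinement if necessary---to a subdivision subordinate to~$M_\lambda$.  Since the regions of the subdivision do not change, class~$\XX$-ness of the regions is preserved, and $(M_\lambda)_\qq \subseteq M_\qq$ is automatically finite-dimensional.

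For each region $I = I_\alpha$, the natural candidate for the abstract vector space subordinate to~$M_\lambda$ is $(M_I)_\lambda := \phi_\ii^{-1}\bigl((M_\lambda)_\ii\bigr) \subseteq M_I$, and the first step is to verify that this subspace is independent of the choice of $\ii \in I$.  For comparable pairs $\ii \preceq \jj$ in~$I$ this is direct: the no-monodromy condition with $I = J$ forces $\sigma_I := \phi_\jj^{-1} \circ f_{\ii,\jj} \circ \phi_\ii$ to be the identity of~$M_I$ (take $\ii = \jj$), so $f_{\ii,\jj} = \phi_\jj \circ \phi_\ii^{-1}$; and since the structure map $f_{\ii,\jj}$ of~$M$ respects the internal direct sum decomposition $M = \bigoplus_\mu M_\mu$, its block component $(M_\lambda)_\ii \to (M_\lambda)_\jj$ is itself an isomorphism (block-diagonal components of an isomorphism on a direct sum are themselves isomorphisms), giving $\phi_\ii^{-1}((M_\lambda)_\ii) = \phi_\jj^{-1}((M_\lambda)_\jj)$.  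Taking this $(M_I)_\lambda$ as the abstract vector space and the restriction of~$\phi_\ii$ as the required isomorphism to~$(M_\lambda)_\ii$, the no-monodromy property for~$M_\lambda$ descends from that for~$M$ because the projector $e_\lambda \colon M \to M$ commutes with every structure map.

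The main obstacle is extending the independence of $(M_I)_\lambda$ from~$\ii$ to incomparable pairs inside a region.  The natural tactic is to exploit that partially ordered abelian groups are both upper- and lower-connected in the sense of \cite[Definitions~3.5.2 and~3.5.3]{hom-alg-poset-mods}, so any $\ii, \jj \in I$ fit into a diamond $\mm \preceq \ii, \jj \preceq \kk$ for some~$\mm$ in some region~$M'$ and some~$\kk$ in some region~$K$.  Transporting the idempotent components $e_\lambda|_\ii$ and $e_\lambda|_\jj$ through~$\phi_\ii$ and~$\phi_\jj$ produces endomorphisms $\wt{\epsilon}_I^{\,\ii}, \wt{\epsilon}_I^{\,\jj}\in\Hom_\kk(M_I, M_I)$, and the no-monodromy condition yields relations $g_{IK} \circ \wt{\epsilon}_I^{\,\ii} = g_{IK} \circ \wt{\epsilon}_I^{\,\jj}$ and $\wt{\epsilon}_I^{\,\ii} \circ g_{M'I} = \wt{\epsilon}_I^{\,\jj} \circ g_{M'I}$ that force these two endomorphisms to agree on the image of $g_{M'I}$ and modulo the kernel of $g_{IK}$; combining the upper and lower diamond estimates squeezes their difference to zero and hence also equates their images inside~$M_I$.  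Should this squeeze fail in some region because both transport maps are degenerate, the fallback is to refine the given subdivision by partitioning each~$I$ according to the values taken by $\ii\mapsto\wt{\epsilon}_I^{\,\ii}$ in the finite-dimensional space $\Hom_\kk(M_I, M_I)$: these values are semisimply definable from the original subdivision and the projection data, and by tameness only finitely many appear, so the refined regions remain of class~$\XX$.
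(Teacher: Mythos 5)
Your instinct that the paper's brief proof glosses over a genuine difficulty is sound, and in fact the worry goes further than you realize: the paper's proof asserts that \emph{any} constant subdivision subordinate to~$M$ is automatically subordinate to~$M_\lambda$, and that claim is simply false.  Take $Q = \ZZ^2$ and $M = \kk\{L\}$ with $L = \{(a,-a) : a \in \ZZ\}$ the antidiagonal.  The two-region partition $\{L, Q \minus L\}$ is subordinate to~$M$ (all no-monodromy constraints within~$L$ are vacuous, since distinct points of~$L$ are incomparable), yet $M$ decomposes as $M = \kk\{\aa_0\} \oplus \kk\{L \minus \{\aa_0\}\}$, and the summand $M_\lambda = \kk\{\aa_0\}$ has graded dimension $1$ at $\aa_0$ and $0$ elsewhere on~$L$---so this subdivision cannot possibly be subordinate to~$M_\lambda$, because no vector space~$V$ admits isomorphisms to both $\kk$ and~$0$.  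Your first observation---that the no-monodromy condition pins down $\phi_\ii^{-1}\bigl((M_\lambda)_\ii\bigr)$ only along comparable pairs $\ii \preceq \jj$ inside~$I$ but leaves incomparable pairs unconstrained---is precisely the mechanism behind this failure; there is no ``direct check'' of Definition~\ref{d:constant-subdivision} to be had, since the abstract isomorphisms~$\phi_\ii$ bear no relation to the decomposition.

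That said, your proposed repair is also incomplete, and the incompleteness is not a matter of writing more carefully.  The ``diamond'' squeeze on incomparable $\ii, \jj$ via $\mm \preceq \ii, \jj \preceq \kk$ fails already in the $\kk\{L\}$ example: the only candidates for~$\mm$ and~$\kk$ lie in regions where $M$ vanishes, so both $g_{M'I}$ and $g_{IK}$ are the zero map and the squeeze gives no information.  You recognize this and fall back to refining~$I$ by the level sets of $\ii \mapsto \wt\epsilon_I^{\,\ii}$, but this is where the real gap lies: nothing bounds the number of distinct values this map takes (an antichain in~$I$ carries no constraint from no-monodromy, and $\End_\kk(M_I)$ has infinitely many idempotents over an infinite field), and even when the number of values happens to be finite, there is no argument that the level sets are semisimple or of class~$\XX$.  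The idempotents $e_\lambda$ arising from a direct sum decomposition carry no Presburger structure a priori; in the $\kk\{L\}$ example the level sets on~$L$ are $\{\aa_0\}$ and $L \minus \{\aa_0\}$, which happen to be semisimple, but that is an accident of the example rather than a consequence of the hypotheses.  So neither the paper's proof nor your patch actually establishes the lemma: the correct proof must produce a \emph{new} finite class~$\XX$ constant subdivision subordinate to~$M_\lambda$, not merely reuse or refine the one given for~$M$, and it is not evident from the definitions alone how to do that.
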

\begin{proof}
The direct sum induces projection maps $M_\aa \to (M_\lambda)_\aa$ for all $\aa \in Q$.
The structure maps of~$M$ (multiplications by monomials) induce
structure maps on each of its summands.  Any constant subdivision
of~$Q$ subordinate to~$M$ is therefore automatically subordinate
to~$M_\lambda$, by direct check of Definition~\ref{d:constant-subdivision}.
\end{proof}

\begin{defn}\label{d:X-injective}
A $Q$-module $M$ is \emph{$\XX$-injective} if its Matlis dual~$M^\vee$
(Definition~\ref{d:matlis}) is $\XX$-flat (Definition~\ref{d:X-flat}).
An $\XX$-injective $Q$-module is \emph{semisimply injective} if $Q$ is
a Presburger group and $\XX = \text{semisimple}$.  (Note again that
the meaning of ``semisimply'' here refers to
Definition~\ref{d:semisimple}; cf.~Definition~\ref{d:X-flat}.)
\end{defn}      

\begin{remark}\label{r:injectives}
If $Q_+$ is an affine semigroup, then by \cite[Theorem~11.30]{cca} the
injective $\kk[Q_+]$-modules are direct sums of (possibly infinitely
many) indecomposable injectives of the form $\kk\{\qq + F - Q_+\}$ for
some $\qq \in Q$ and face~$F$ of~$Q_+$ (Definition~\ref{d:face}).  By
Lemma~\ref{l:injective-decompose} the number of indecomposables is
finite if the injective module is tame.
\end{remark}

\begin{example}\label{e:X-injective}
Over a general class~$\XX$ partially ordered abelian group~$Q$, there
could be a distinction between $\XX$-injective $Q$-modules and
injective $Q$-modules of class~$\XX$,
because a class~$\XX$ indecomposable injective need not be Matlis dual
to a localization along a face.  Take $Q = \RR^n$, for instance, with
$Q_+ = (\RR_{\geq0})^n$.  If $U = (\RR_{>0})^n$ is the interior of the
positive orthant, then the upset module $\kk[U]$ is flat
\cite[Definition~2.2 and Proposition~2.4]{gldim} and indecomposable
but not a localization of~$\kk[Q_+]$.
\end{example}

\begin{remark}\label{r:not-restrictive}
Despite the caveat in Example~\ref{e:X-injective}, using only
$\XX$-injectives instead of arbitrary injectives of class~$\XX$ is not
restrictive in the cases of interest here, because discrete settings
like Presburger definable behave more like Remark~\ref{r:injectives}
than Example~\ref{e:X-injective}.  Indeed, using Lazard's criterion
\cite{lazard1964} to express flat modules as filtered colimits of free
modules, discrete multigradings force degrees to head off to infinity.
The failure of discreteness in Example~\ref{e:X-injective} is
decisive, and it can cause far worse behavior than oddly shaped flat
or injective modules: tame modules need not admit flat presentations
with countably many summands \cite[Example~1.2]{essential-real}, let
alone finitely many localization summands.  Tame modules do, however,
always admit finite resolutions by finite direct sums of upset or
downset modules \cite[Theorem~6.12]{hom-alg-poset-mods}, so numerics
in these more general situations are not entirely hopeless.
\end{remark}

\begin{thm}\label{t:ext}
Fix a class~$\XX$ group~$Q$.  If one of the following two scenarios is
in effect for given $Q$-modules~$M$ and~$N$, then $\eext^i_Q(M,N)$ is
of class~$\XX$.
\begin{enumerate}
\item\label{i:inj}%
$M$ is of class~$\XX$ and $N$ has an injective resolution~$I^\spot$ that
is $\XX$-injective in cohomological degrees $i - 1$, $i$, and $i + 1$.
\item\label{i:flat}%
$N$ is of class~$\XX$ and $M$ has a flat resolution~$C_\spot$ that is
$\XX$-flat in homological degrees $i - 1$, $i$, and $i + 1$.
\end{enumerate}
\end{thm}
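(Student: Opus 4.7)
My plan for both scenarios is to reduce the computation of $\eext^i_Q(M,N)$ to a tensor-product computation involving an $\XX$-flat three-term complex, and then transport the conclusion back through Matlis duality. The pivotal prior results are Proposition~\ref{p:vee} (the adjunction between $\hhom$ and~$\otimes$), Theorem~\ref{t:X-flat} (tensoring an $\XX$-flat complex with a class~$\XX$ module yields a class~$\XX$ complex with class~$\XX$ homology), Theorem~\ref{t:tor} (its $\Tor$-flavored specialization), and Lemma~\ref{l:vee-preserves-X} (Matlis duality preserves class~$\XX$ and is reflexive in the tame regime, which in particular applies to every $\XX$-flat or $\XX$-injective module).

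For scenario~\ref{i:inj}, I would set $F^j = (I^j)^\vee$ for $j \in \{i-1,i,i+1\}$, which are $\XX$-flat by Definition~\ref{d:X-injective}; reflexivity from Lemma~\ref{l:vee-preserves-X} then gives $(F^j)^\vee \cong I^j$, noting that $\XX$-flatness of~$(I^j)^\vee$ forces finite-dimensional graded pieces on~$I^j$. Dualizing the two differentials of the three-term chunk $I^{i-1}\to I^i\to I^{i+1}$ produces a three-term $\XX$-flat complex $F_\spot$ with $(F_\spot)^\vee$ isomorphic to that chunk. Proposition~\ref{p:vee} applied with $C_\spot = M$ then identifies $\hhom_Q(M, I^{i-1}\to I^i\to I^{i+1})$ with the Matlis dual of $M \otimes_Q F_\spot$. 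Since the cohomology at position~$i$ of $\hhom_Q(M, I^\spot)$ depends only on these three terms, exactness of Matlis duality (Remark~\ref{r:matlisexact}) gives
$$
\eext^i_Q(M, N) \cong H_i(M \otimes_Q F_\spot)^\vee.
$$
Theorem~\ref{t:X-flat} makes $H_i(M \otimes_Q F_\spot)$ of class~$\XX$, and Lemma~\ref{l:vee-preserves-X} transfers class~$\XX$-ness to its Matlis dual.

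For scenario~\ref{i:flat}, since $N$ is of class~$\XX$, Lemma~\ref{l:vee-preserves-X} yields that $N^\vee$ is of class~$\XX$ and $(N^\vee)^\vee \cong N$. Corollary~\ref{c:vee}, applied with $N^\vee$ in place of~$N$ there, then supplies the canonical isomorphism
$$
\eext^i_Q(M, N) \cong \eext^i_Q\bigl(M, (N^\vee)^\vee\bigr) \cong \Tor_i^Q(M, N^\vee)^\vee.
$$
By symmetry of $\Tor$, $\Tor_i^Q(M, N^\vee) \cong \Tor_i^Q(N^\vee, M)$, and Theorem~\ref{t:tor} applies with $N^\vee$ playing the role of the class~$\XX$ module and $M$ playing the role of the module with an $\XX$-flat resolution in degrees $i-1$, $i$, $i+1$; thus $\Tor_i^Q(N^\vee, M)$ is of class~$\XX$. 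One final Matlis dualization via Lemma~\ref{l:vee-preserves-X} finishes.

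The main obstacle I anticipate is not in any single step but in the Matlis-duality bookkeeping, particularly in verifying that exactness of Matlis duality correctly converts cohomology on one side to homology on the other, and in checking that the reflexivity hypothesis required to apply Proposition~\ref{p:vee} and Corollary~\ref{c:vee} is genuinely supplied (by the $\XX$-injectivity of the relevant~$I^j$ in scenario~\ref{i:inj}, and by the class~$\XX$-ness of~$N$ in scenario~\ref{i:flat}).
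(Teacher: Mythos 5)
Your proof is correct and follows essentially the same route as the paper's: in Scenario~\ref{i:inj} you dualize the three-term chunk of the injective resolution, apply Proposition~\ref{p:vee}, and invoke Theorem~\ref{t:X-flat} and Lemma~\ref{l:vee-preserves-X}; in Scenario~\ref{i:flat} you pass to $\Tor_i^Q(M,N^\vee)^\vee$ and conclude. The only cosmetic difference is that in Scenario~\ref{i:flat} you cite Corollary~\ref{c:vee} together with $\Tor$-symmetry and Theorem~\ref{t:tor}, whereas the paper re-derives the same identification directly from Proposition~\ref{p:vee} and applies Theorem~\ref{t:X-flat} to $C_\spot\otimes_Q N^\vee$; these are the same argument packaged slightly differently.
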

\begin{proof}
By Remark~\ref{r:matlisexact}, in Scenario~\ref{i:inj}, $F_\spot =
(I^\spot)^\vee$ is a flat resolution of~$N^\vee$.  The $\XX$-injective
hypothesis in cohomological degrees surrounding~$i$ imply that the
three-term complex $I^{i-1} \to I^i \to I^{i+1}$ is Matlis dual to
$F_{i-1} \from F_i \from F_{i+1}$ by the double-dual part of
Lemma~\ref{l:vee-preserves-X}.  Thanks to Proposition~\ref{p:vee},
$\eext_Q^i(M, N)$ is Matlis dual to the middle homology of the
three-term complex \mbox{$M \otimes_Q F_{i-1} \from M \otimes_Q F_i
\from M \otimes_Q F_{i+1}$}.  The resulting cohomology is of
class~$\XX$ by Theorem~\ref{t:X-flat} and
Lemma~\ref{l:vee-preserves-X}.

In Scenario~\ref{i:flat}, $\eext_Q^i(M, N)$ can be computed as the
Matlis dual of the middle homology of the three-term complex $C_{i-1}
\otimes_Q N^\vee \to C_i \otimes_Q N^\vee \to C_{i+1} \otimes_Q N^\vee$ by
Proposition~\ref{p:vee} and the double-dual part of
Lemma~\ref{l:vee-preserves-X} (the latter applied to~$N$).  The
resulting cohomology is of class~$\XX$ by Theorem~\ref{t:X-flat} and
Lemma~\ref{l:vee-preserves-X}.
\end{proof}

\begin{remark}\label{r:noetherian-assump'}
Assume that $Q_+$ is an affine semigroup whose faces are class $\XX$
and Minkowski summable.  Assume further that all translates of~$Q_+$
are class~$\XX$.  (These assumptions on $Q_+$ are all satisfied when
$\XX = \text{semisimple}$.)
\begin{enumerate}
\item\label{i:inj-assump}%
The assumption on $I^\spot$ in Theorem~\ref{t:ext}.\ref{i:inj} is
satisfied in all cohomological degrees by any noetherian or artinian
$N$ by Remarks~\ref{r:matlisexact},~\ref{r:noetherian-assump},
and~\ref{r:injectives}.

\item\label{i:flat-assump}%
The assumption on $C_\spot$ in Theorem~\ref{t:ext}.\ref{i:flat} is
satisfied in all homological degrees by any noetherian or artinian $M$
as shown in Remark~\ref{r:noetherian-assump}.
\end{enumerate}
\end{remark}

\section{Constructible families}\label{s:families}

This section formulates constructions to extend the notions of
(semisimple) constructibility from modules to families of modules.  It
essentially takes the theoretical join of \S\ref{s:numerics}
and~\ref{s:constructible-mods}, which respectively treat families at a
numerical level and constructibility for individual modules as opposed
to in families.

In this section, $Q$ is a Presburger group
(Definition~\ref{d:presburger-group}) unless otherwise specified.

\subsection{Rees monoids}\label{b:rees-monoids}\mbox{}

\medskip
\noindent
Classically, the Rees algebra of a family of ideals lends control over
numerics, if the Rees algebra is noetherian.  Here, what lends that
control is the Rees monoid.

\begin{defn}\label{d:rees-monoid}
A \emph{Rees monoid} over~$Q$ is the positive cone~$G_+$ of a
Presburger group $G = Z \times Q$ such that $G_+ \cap \bigl(\{\0\} \times Q \bigr)
= \{\0\} \times Q_+$ (so the zero-slice of the positive cone~$G_+$ is a
copy~of~$Q_+$).  A~Rees monoid $G_+$ over~$Q$ is \emph{free} or
\emph{flat} if, respectively, the monoid algebra $\kk[G_+]$ is free or
flat as a module over~$\kk[Q_+]$.
\end{defn}

The rubric for Definition~\ref{d:rees-monoid} is the quintessential
construction of the Rees algebra from powers of an ideal.  It is worth
worth isolating this special case.  It is also useful to widen the
context to allow independent powers of finitely many ideals.

\begin{defn}\label{d:rees-algebra}
The \emph{Rees algebra} of a set of monomial ideals $J_1, \ldots, J_k$ in
$\kk[Q_+]$ is the algebra $\kk[Q_+][J_1 t_1, \ldots, J_k t_k]\subseteq \kk[Q_+][t_1, \ldots,
t_k]$, where $t_1, \ldots, t_k$ are indeterminates.
\end{defn}

\begin{example}\label{e:rees-alg}
Fix nonzero constructible monomial ideals $J_1, \ldots, J_k$ of~$\kk[Q_+]$.
If $G_+ \subseteq \NN^k \times Q_+$ is the semigroup whose monoid algebra~$\kk[G_+]$
equals $\kk[Q_+][J_1 t_1, \ldots, J_k t_k]$, then by
Proposition~\ref{p:atoms-definable} $G_+$ is a Rees monoid over~$Q$,
with $G = \ZZ G_+ = \ZZ^k \times Q$: the atoms of $G_+$ correspond to atoms
of~$Q_+$ and minimal generators of $J_1, \ldots, J_k$.
\end{example}

\begin{defn}\label{d:ideal-rees-monoid}
In Example~\ref{e:rees-alg}, $G_+$ is the \emph{Rees monoid} of the
ideals $J_1, \ldots, J_k$.
\end{defn}

\subsection{Constructible families}\label{b:constructible-families}\mbox{}

\medskip
\noindent 
Classically, a family yields a module over the Rees algebra.  The same
idea works here.  Added freedom in this multigraded context arises
because this Rees algebra module need not be noetherian; it only needs
to be definable by Presburger arithmetic.

\begin{defn}\label{d:family}
Fix a Rees monoid $G_+$ over $Q$ with $G = Z \times Q$.
A~family~$\{M_\nn\}_{\nn \in Z}$ of~$Q$-modules is a \emph{$G$-family}
if its direct sum yields a $G$-module
$$
  \cM
  = \bigoplus_{\nn \in Z} M_\nn(-\nn)
  = \bigoplus_{\ggg \in G} M_\ggg
  = \bigoplus_{\nn\qq} M_{\nn\qq}
$$
where $M(-\nn)$ is the $Q$-module $M$ shifted to the slice $\{\nn\} \times Q$
of~$G$.  A~\emph{constructible family} over~$G_+$ is a $G$-family
$\{M_\nn\}_{\nn \in Z}$ whose \emph{associated} $G$-module~$\cM$ is
constructible.
\end{defn}

\begin{remark}\label{r:fam-singleton}
In Definition~\ref{d:family}, if $Z$ has rank~$0$ then a $G$-family
$\{M_\nn\}_{\nn \in Z}$ consists of a single $Q$-module~$M$, which is
constructible when the family is constructible.
\end{remark}

The simplest examples of constructible families are those over Rees
algebras of finitely many constructible monomial ideals.  It is useful
to develop methods of detecting this constructibility.

\begin{defn}\label{d:membership}
Fix a family\/ $\MM = \{M_\nn\}_{\nn \in Z}$ of indicator $Q$-modules indexed
by a free abelian group~$Z$, so $M_\nn = \kk\{S_\nn\}$ for some poset-convex
subset $S_\nn \subseteq Q$ (Definition~\ref{d:indicator-module}).  To say
\emph{membership in~$\MM$ is Presburger} means that the set $\bigcup_{\nn \in Z}
\{\nn\} \times S_\nn = \{\nn\qq \in Z \times Q \mid \dim_\kk M_{\nn\qq} = 1\}$ is semisimple.
\end{defn}

\begin{lemma}\label{l:membership}
Membership in a family $\bbI = \{I_\nn\}_{\nn \in Z}$ of monomial ideals is
Presburger if and only if the \emph{top set} $T \subset Z \times Q_+$ of all $\nn\qq$
such that $\xx^\qq$ is a minimal monomial generator of~$I_\nn$ is
semisimple.
\end{lemma}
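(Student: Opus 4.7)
The plan is to prove both implications by writing explicit Presburger formulas and invoking Theorem~\ref{t:presburger} in the form ``semisimple $\iff$ Presburger definable'', using that $Q_+$ is itself Presburger definable since $Q$ is a Presburger group. Write $M \subseteq Z \times Q_+$ for the membership set $\{(\nn,\qq) \mid \xx^\qq \in I_\nn\}$, so that membership in $\bbI$ being Presburger is precisely the semisimpleness of~$M$. Fix throughout a Presburger formula $G(\rr)$ defining $Q_+$.

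For the forward direction, I would assume $M$ is semisimple and fix a Presburger formula $F(\nn,\qq)$ defining it. A pair $(\nn,\qq)$ lies in the top set $T$ iff $\xx^\qq \in I_\nn$ and no proper predecessor of~$\qq$ in $Q_+$ lies in $I_\nn$, so $T$ is cut out by
$$
  F(\nn,\qq) \,\wedge\, \neg(\exists \rr)\bigl(G(\rr) \wedge \rr \neq \0 \wedge F(\nn, \qq - \rr)\bigr),
$$
and Theorem~\ref{t:presburger} then concludes that $T$ is semisimple.

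For the reverse direction, I would assume $T$ is semisimple and fix a defining formula $H(\nn,\qq)$. Because $\xx^\qq \in I_\nn$ iff $\xx^\qq$ is divisible by some minimal monomial generator, the set $M$ is defined by $(\exists \rr)\bigl(H(\nn,\qq-\rr) \wedge G(\rr)\bigr)$ and is therefore Presburger, hence semisimple. Equivalently, one may write $M = T + (\{\0\} \times Q_+)$ as subsets of $Z \times Q$, a Minkowski sum of two semisimple sets, which remains semisimple by the closure property used in the proof of Proposition~\ref{p:valid-class-X}.

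I anticipate no substantial obstacle: the argument is essentially one formula manipulation in each direction, glued to Theorem~\ref{t:presburger}. The only point meriting care is that ``minimal monomial generator of $I_\nn$'' be faithfully encoded in Presburger arithmetic; this encoding requires quantifying over nonzero elements of~$Q_+$, which is legitimate exactly because $Q_+$ is Presburger definable---a standing consequence of $Q$ being a Presburger group.
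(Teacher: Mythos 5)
Your proposal is correct and follows essentially the same route as the paper: fix coordinate isomorphisms, fix Presburger formulas $F$ (membership), $G$ ($Q_+$), $H$ (top set), then manufacture $T$ from $F$ by forbidding a nonzero $Q_+$-predecessor and manufacture $M$ from $H$ by Minkowski-summing with $Q_+$, invoking Theorem~\ref{t:presburger} in both directions. If anything, your forward formula $F(\nn,\qq) \wedge \neg(\exists \rr)(G(\rr) \wedge \rr \neq \0 \wedge F(\nn,\qq-\rr))$ is stated a bit more carefully than the paper's.
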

\begin{proof}
Choose isomorphisms $Q \cong \ZZ^d$ and $Z \cong \ZZ^k$, and use
Theorem~\ref{t:presburger} to pass freely between ``semisimple'' and
``Presburger definable''.  Let $Q_+$ be defined by the Presburger
formula~$G(\vv)$.  Assume membership in $\bbI$ is definable by~$F(\nn,\qq)$.
The top set $T$ can be defined via the formula
$$
  H(\nn,\qq)
  =
  \neg(\exists \aa, \vv \in \ZZ^d) \bigl(F(\nn,\aa) \wedge G(\vv) \wedge \neg(\qq = \0) \wedge (\qq = \aa + \vv)\bigr).
$$
Conversely, assume $T$ is defined by $H(\nn,\aa)$.  Membership in $\bbI$ is
Presburger definable by the formula
\begin{equation*}
  F(\nn,\qq)
  =
  (\exists \aa,\vv \in \ZZ^d) \bigl(H(\nn,\aa) \wedge G(\vv) \wedge (\qq = \aa + \vv)\bigr).\qedhere
\end{equation*}
\end{proof}

\begin{prop}\label{p:rees-is-presburger}
Fix a family $\MM = \{M_\nn\}_{\nn \in \ZZ^k}$ of indicator $Q$-modules and
nonzero constructible monomial ideals $J_1, \ldots, J_k$ of\/~$\kk[Q_+]$.
The following are equivalent.
\begin{enumerate}
\item\label{i:fam-constructible}%
$\MM$ is constructible over the Rees monoid of $J_1,\ldots,J_k$.
\item\label{i:other-conditions}%
Membership in~$\MM$ is Presburger and $J_i \subseteq
\bigcap_{\nn\in\ZZ^k} M_{\nn+\ee_i} :_R M_\nn$ for every~$i$, where
$\ee_i$ is the $i$th standard basis vector of\/~$\ZZ^k$.
\end{enumerate}
\end{prop}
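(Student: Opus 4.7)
The plan is to split (2) into its two clauses and match each with a structural feature of the claimed constructibility in (1). The colon containment $J_i \subseteq M_{\nn+\ee_i} :_R M_\nn$ encodes well-definedness of $\cM = \bigoplus_{\nn} M_\nn(-\nn)$ as a $\kk[G_+]$-module extending its $\kk[Q_+]$-module structures, while ``membership in~$\MM$ is Presburger'' encodes the existence of a semisimple constant subdivision of $G$ subordinate to $\cM$.

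First I would verify that the colon condition is equivalent to $\cM$ being a $G$-module. Since $\kk[G_+] = \kk[Q_+][J_1 t_1, \ldots, J_k t_k]$ is generated over $\kk[Q_+]$ by the monomials $\xx^\aa t_i$ with $\xx^\aa \in J_i$, prescribing the action $\xx^\aa t_i \cdot \xx^\qq := \xx^{\qq+\aa}$ gives a well-defined extension if and only if this image lies in $M_{\nn+\ee_i}$ whenever $\xx^\qq \in M_\nn$; that is precisely the colon condition. Compatibility under higher products of the generators $t_i$ and $t_j$ then follows by commutativity and iteration. This settles the ``$G$-family'' content shared by (1) and (2).

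Assuming now that $\cM$ is a $G$-module, I would show that constructibility of $\cM$ is equivalent to semisimplicity of the top set $T = \{\nn\qq \in G : \dim_\kk M_{\nn\qq} = 1\}$. For (1)~$\Rightarrow$~(2), any semisimple constant subdivision subordinate to $\cM$ partitions $G$ into finitely many semisimple regions of constant dimension, and $T$ is the union of those regions on which the dimension equals~$1$; finite unions of semisimple sets are semisimple. For (2)~$\Rightarrow$~(1), Theorem~\ref{t:presburger} makes $G \setminus T$ semisimple (Presburger formulas are closed under negation), so $\{T, G \setminus T\}$ is a two-region semisimple partition of $G$ with constant dimensions $1$ and $0$. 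Subordination and absence of monodromy are then automatic: every structure map of $\cM$ between two points of $T$ is the identity on~$\kk$ (identifying each line $M_{\nn\qq}$ with $\kk \cdot \xx^\qq$ and using the colon condition to ensure images land in the relevant $M_{\nn'}$), while every composition that touches a point of $G \setminus T$ is zero. In each case the composition $M_I \to M_\ii \to M_\jj \to M_J$ depends only on the ordered pair $(I,J)$.

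The one minor obstacle I anticipate is the temptation to require $T$ to be an upset of $(G,G_+)$ in order to rule out nonzero maps off $T$; but the zero structure maps at points of the complement automatically eliminate any potential monodromy, so it is enough for $T$ to be merely semisimple. This observation keeps the argument short and shifts the burden entirely onto Presburger bookkeeping rather than onto the internal structure of the poset-convex sets $S_\nn$.
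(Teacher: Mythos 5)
Your proposal follows the same plan as the paper's proof: split the two clauses of (2) against the two defining features of constructibility in (1), identify the colon containment with $\cM$ being a $G$-module, and use the two-block partition $\{T, G\setminus T\}$ as the constant subdivision, whose semisimplicity is exactly Presburger membership. That much is right, and the (1)$\Rightarrow$(2) direction is fine as you wrote it.

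The gap is in (2)$\Rightarrow$(1), in the step where you assert that "every structure map of $\cM$ between two points of $T$ is the identity on~$\kk$." The module axiom forces the structure map $\cM_\ii \to \cM_\jj$ to equal the composite $\cM_\ii \to \cM_\ggg \to \cM_\jj$ for \emph{every} intermediate $\ggg$ with $\ii \preceq \ggg \preceq \jj$. If such a $\ggg$ lies in $G\setminus T$, then $\cM_\ggg = 0$ forces the map $\cM_\ii \to \cM_\jj$ to be zero even though $\ii, \jj \in T$. So the compositions $M_T \to M_\ii \to M_\jj \to M_T$ would take value $0$ for some comparable pairs and $1$ for others --- that is exactly monodromy within the region $T$, and the partition would fail to be a constant subdivision. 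Your closing paragraph reasons that "the zero structure maps at points of the complement automatically eliminate any potential monodromy," but this has the logic backwards: those zero compositions are precisely what would \emph{create} monodromy on~$T$ unless one first establishes that they never occur, i.e.\ that $T$ is poset-convex in~$G$. The paper addresses this with a (terse) appeal to poset-convexity of the slices~$S_\nn$, and under the set-theoretic reading of the colon condition --- $\xx^\aa \in J_i$ and $\qq \in S_\nn$ imply $\qq + \aa \in S_{\nn + \ee_i}$ --- the set $T$ is in fact an upset of $(G, G_+)$, which disposes of the issue entirely. Either route works, but some such argument must appear; as written, your proof asserts the conclusion (all $T$-to-$T$ maps are identity) without the observation that makes it true.
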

\begin{proof}
The Rees monoid $G_+$ of $J_1, \ldots, J_k$ makes $G = \ZZ G_+$ a Presburger
group because the ideals are constructible.  This allows the notion of
constructible $G$-family to make sense in the first place.

Now define $\cM = \bigoplus_{\nn \in \ZZ^k} M_\nn(-\nn)$.  The condition for $\cM$ to be a
$G$-module is that $J_i M_\nn \subseteq M_{\nn+\ee_i}$ for every~$i$ and all~$\nn \in
\ZZ^k$.  This condition is equivalent to requiring that $J_i \subseteq \bigcap_{\nn\in\ZZ^k}
M_{\nn+\ee_i}:_R M_\nn$ for every~$i$.  Moreover, $\cM$ dominates a constant
subdivision of~$G$ with two regions
$S_0 = \{\ggg\in G \mid \cM_\ggg = 0\}$ and $S_1 = \{\ggg\in G \mid \dim_\kk \cM_\ggg = 1\}$
because the supports of the $Q$-slices~$M_\nn$ are poset-convex by
Definition~\ref{d:indicator-module}.  (The poset-convexity ensures
that structure homomorphisms $\kk = \cM_\ggg \to \cM_{\ggg'} = \kk$ are all
isomorphisms and not forced to factor through some $\cM_{\ggg''} = 0$.)
This subdivision is semisimple precisely when membership in~$\MM$ is
Presburger definable.
\end{proof}

\begin{cor}\label{r:restrict-rees-monoid}
If the family $\MM$ is constructible over the Rees monoid of
constructible monomial ideals $J_1, \ldots, J_k$, and $J'_i \subseteq J_i$ are
nonzero constructible monomial ideals, then $\MM$ is also constructible
over the Rees monoid of $J_1', \ldots, J_k'$.
\end{cor}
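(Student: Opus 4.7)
The plan is to deduce this corollary directly from the characterization in Proposition~\ref{p:rees-is-presburger}, which reduces constructibility of an indicator family over a Rees monoid to two concrete data: Presburger membership in the family, and the colon containment condition on the ideals.

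First, I would apply Proposition~\ref{p:rees-is-presburger} in the ``\ref{i:fam-constructible}$\Rightarrow$\ref{i:other-conditions}'' direction to the constructible family $\MM$ over the Rees monoid of $J_1,\ldots,J_k$. This extracts two pieces of information: (a)~membership in~$\MM$ is Presburger definable, and (b)~$J_i \subseteq \bigcap_{\nn \in \ZZ^k} M_{\nn+\ee_i}:_R M_\nn$ for each~$i$. Next, I would verify that the Rees monoid $G_+'$ of $J_1',\ldots,J_k'$ is itself a valid base over which to talk about constructible families: since each $J_i'$ is a nonzero constructible monomial ideal, the set of atoms of~$G_+'$ is semisimple by the analogue of Example~\ref{e:rees-alg}, and Proposition~\ref{p:atoms-definable} then promotes $G_+'$ itself to a semisimple submonoid of~$\ZZ^k \times Q$, so that $G = \ZZ^k \times Q$ with positive cone~$G_+'$ is a Presburger group.

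Now I would verify both conditions of Proposition~\ref{p:rees-is-presburger}.\ref{i:other-conditions} with the ideals $J_i'$ in place of~$J_i$. Condition~(a) is completely unchanged because it involves only the abstract family~$\MM$ and not the ideals. Condition~(b) is immediate from the hypothesis $J_i' \subseteq J_i$ together with the containment~(b) established above, so that
$$
J_i' \;\subseteq\; J_i \;\subseteq\; \bigcap_{\nn \in \ZZ^k} M_{\nn+\ee_i}:_R M_\nn
$$
for every~$i$. Applying the ``\ref{i:other-conditions}$\Rightarrow$\ref{i:fam-constructible}'' direction of Proposition~\ref{p:rees-is-presburger} then yields that $\MM$ is constructible over the Rees monoid of $J_1',\ldots,J_k'$, completing the proof.

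There is no real obstacle here: the only subtle point is to confirm that the new Rees monoid $G_+'$ is genuinely Presburger so that Proposition~\ref{p:rees-is-presburger} is applicable, but this follows formally from the constructibility hypothesis on the $J_i'$ via Proposition~\ref{p:atoms-definable}. Everything else is a one-line invocation of the equivalence that was set up precisely to make this sort of monotonicity statement cost-free.
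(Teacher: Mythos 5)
Your proof is correct and takes essentially the same approach as the paper: the paper's one-line argument is that Proposition~\ref{p:rees-is-presburger}.\ref{i:other-conditions} immediately passes from the $J_i$ to the sub-ideals $J_i'$, which is exactly what you spell out via the chain $J_i' \subseteq J_i \subseteq \bigcap_{\nn} M_{\nn+\ee_i}:_R M_\nn$. Your added check that the Rees monoid of the $J_i'$ is a Presburger group is already implicit in the first paragraph of the proof of Proposition~\ref{p:rees-is-presburger}, so there is no substantive difference.
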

\begin{proof}
Proposition~\ref{p:rees-is-presburger}.\ref{i:other-conditions}
immediately passes to the sub-ideals~$J_i'$.
\end{proof}

The following proposition allows us to restrict to free (and hence
flat) Rees monoids when dealing with tensor products in the proof of
Theorem~\ref{t:tor-family}.

\begin{prop}\label{p:free-reduction}
Each constructible family $\{M_\nn\}_{\nn \in Z}$ over a Rees
monoid~$G_+$ over~$Q$, with $G = Z \times Q$, has a \emph{free
reduction}: a free Rees monoid $G_+' \subseteq G_+$ over~$Q$, with $G'
= Z' \times Q$ of the same rank as~$G$, such that $\{M_\nn\}_{\nn \in
Z}$ is a constructible family~over~$G'$.
\end{prop}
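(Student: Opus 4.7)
\emph{Plan of proof.} The strategy is to build $G_+'$ inside $G_+$ as a submonoid isomorphic to $\NN^k \times Q_+$, where $k = \rank Z$, and then verify that the restriction of $\{M_\nn\}_{\nn \in Z}$ to the resulting full-rank sublattice $Z' \subseteq Z$ inherits semisimple constructibility. By Definition~\ref{d:pogroup}, fullness of $G$ forces $G_+$ to generate a finite-index subgroup of $G = Z \times Q$; projecting under $\pi\colon G \to Z$ therefore yields a set $\pi(G_+) \subseteq Z$ that generates a finite-index sublattice and so spans $Z \otimes_\ZZ \RR$. Accordingly, one can select $\ZZ$-linearly independent elements $a_1, \ldots, a_k \in \pi(G_+)$, and for each~$i$ fix $\tilde q_i \in Q$ with $\tilde a_i := (a_i, \tilde q_i) \in G_+$.

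Next, set $Z' = \sum_{i=1}^k \ZZ a_i$, $G' = Z' \times Q$, and
$$
G_+' \;=\; \sum_{i=1}^{k} \NN \tilde a_i \;+\; \{0\} \times Q_+ \;\subseteq\; G_+.
$$
The map $\phi\colon \ZZ^k \times Q \to G'$ sending $(m_1,\ldots,m_k,q) \mapsto \sum_i m_i \tilde a_i + (0,q)$ is a group isomorphism by $\ZZ$-linear independence of the $a_i$, and it carries $\NN^k \times Q_+$ onto $G_+'$. All the required properties of a free Rees monoid drop out of this identification: $G_+'$ has trivial units, its zero-slice is $\{0\} \times Q_+$, it is semisimple in~$G'$ (as a product of a simple set with the semisimple set $Q_+$ arising from Presburger-ness of~$Q$), and $\kk[G_+'] \cong \kk[\NN^k] \otimes_\kk \kk[Q_+]$ is free over $\kk[Q_+]$; finally $\rank Z' = k = \rank Z$.

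Finally, let $\cM = \bigoplus_{\nn \in Z} M_\nn(-\nn)$ be the associated $G$-module and fix a finite semisimple constant subdivision $G = \bigsqcup_\alpha S_\alpha$ subordinate to $\cM$. Each intersection $S_\alpha \cap G'$ is Presburger definable in~$G$ (intersections of Presburger sets are Presburger), and since $G'$ is itself Presburger definable in~$G$ as a sublattice, transporting the defining formula along~$\phi$ renders $S_\alpha \cap G'$ Presburger definable in~$G'$, hence semisimple by Theorem~\ref{t:presburger}. The partition $\{S_\alpha \cap G'\}_\alpha$ is then a semisimple constant subdivision subordinate to $\cM' = \bigoplus_{\nn \in Z'} M_\nn(-\nn)$, with the no-monodromy condition inherited from~$\cM$, showing that $\{M_\nn\}_{\nn \in Z'}$ is a constructible family over~$G'$. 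The main care point lies here: one must verify that the semisimple structure persists after intersection with the sublattice~$G'$. This is routine via Presburger arithmetic, but it is the only step that is not a direct computation with the isomorphism~$\phi$; Step~1, by contrast, depends crucially on the fullness clause in Definitions~\ref{d:pogroup} and~\ref{d:presburger-group} to guarantee enough linearly independent lifts in~$G_+$.
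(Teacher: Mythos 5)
Your construction of the free Rees monoid $G_+'$ is the same as the paper's: project $G_+$ to $Z$, use fullness to extract $\ZZ$-linearly independent $a_1,\ldots,a_k$ in the image, lift to $\tilde a_i \in G_+$, and set $G_+' = \sum_i \NN \tilde a_i + \{\0\}\times Q_+$. You also correctly flag that fullness is the load-bearing hypothesis. So far so good.

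The gap is in the final step. You intersect the subordinate subdivision only with $G' = Z'\times Q$ and conclude that $\{M_\nn\}_{\nn\in Z'}$ is constructible over $G'$. But the proposition asserts constructibility of the original family indexed by all of $Z$, and $Z' = \sum_i \ZZ a_i$ is in general a \emph{proper} sublattice of $Z$ (of finite index, since the $a_i$ span $Z\otimes\RR$). Your argument discards every $M_\nn$ with $\nn\notin Z'$, so it establishes a strictly weaker statement than claimed. The paper fixes this by observing that the finitely many cosets $\zz+Z'$ of $Z'$ in $Z$ partition $Z$, that the restriction $M_{\zz+Z'} = \bigoplus_{\nn\in\zz+Z'}M_\nn(-\nn)$ is a constructible $G'$-family for each coset representative $\zz$ (intersect each original region $S_\alpha$ with $(\zz+Z')\times Q$ and append the vanishing region $\bigl(Z\smallsetminus(\zz+Z')\bigr)\times Q$), and that $\cM = \bigoplus_\zz M_{\zz+Z'}$ is a \emph{finite} direct sum of constructible modules, hence constructible. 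You should add this coset decomposition and the finite-direct-sum step to recover the full claim; the intersection-with-a-sublattice observation you already made is exactly what is needed, applied once per coset rather than only to the zero coset.
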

\begin{proof}
Let $k$ be the rank of~$Z$ and $\pi: G = Z \times Q \to Z$ be
projection.  Since $G_+$ generates a finite index subgroup of~$G$ by
the full hypothesis in Definition~\ref{d:rees-monoid} (see
Definition~\ref{d:pogroup}), there exist $k$ linearly independent
elements $\nn_1, \ldots, \nn_k$ in $\pi(G_+)$.  Denoting each ordered
pair $(\nn,\qq) \in Z \times Q$ by~$\nn\qq$, for notational simplicity
(and to match the subscripts in Definition~\ref{d:family}), for
each~$i$ choose $\nn_i \qq_i \in G_+ \cap \pi^{-1}(\nn_i)$.  Let
$G_+'\subseteq G_+$ be the monoid generated by $\{\0\} \times Q$ and
the $\nn_i \qq_i$.  By construction, $G_+'$ is a free Rees monoid
over~$Q$.  In particular, $G_+'$ is semisimple, and $G' = \ZZ G_+' =
Z'\times Q$ has the same rank as~$G$, where $Z' = \ZZ\{\nn_1, \ldots,
\nn_k\}$.

The restriction~$M_{\zz + Z'}$ of~$\cM$ to the coset of~$Z'$ based at
$\zz \in Z$ is a $G'$-family because $\cM$ is a $G$-family and $G_+'
\subseteq G_+$.  Moreover, if $G = \bigcup_{\alpha \in A} S_\alpha$ is
a semisimple constant subdivision of~$G$ subordinate to~$\cM$, then
intersecting each $S_\alpha$ with the coset $\zz + Z'$ yields a
semisimple constant subdivision of~$G$ subordinate to~$M_{\zz + Z'}$
after appending one additional constant region $Z \minus (\zz + Z')$
where $M_{\zz + Z'}$ vanishes.  Hence $M_{\zz + Z'}$ is constructible,
and so is~$\cM$, being the finite direct sum of these over the cosets
of~$Z'$.
\end{proof}

\begin{example}\label{e:cone-over-square}
View $Q_+ = \NN^2$ as lying in a horizontal plane, and let $G_+$ be the
cone over a ``square'', namely generated by $Q_+$ and the two vectors
$(1,0,1)$ and $(0,1,1)$ directly above the two generators of~$Q_+$.
The submonoid $G_+'$ in Proposition~\ref{p:free-reduction} could
be generated by, say, $Q_+$ and the vector $(1,0,1)$ that sits above
the $x$-axis.  Although any constructible family over $G_+$ is
automatically constructible over~$G_+'$ by the Proposition, it is not
true that a noetherian $G$-module must be noetherian as a $G'$-module.
Indeed, even $\kk[G_+]$ itself fails to be finitely generated
over~$\kk[G_+']$.  Constructibility is a weaker---meaning more
inclusive---absolute combinatorial condition, whereas noetherian is a
stronger relative algebraic property that depends strongly on the base
ring.
\end{example}

One strength of constructible module theory is the ease of extending
it to complexes.

\begin{defn}\label{d:complexes-constructible}
Fix a Rees monoid $G_+$ over $Q$ with $G = Z \times Q$.
A~\emph{constructible family of complexes} is a family
$\{C^\spot_\nn\}_{\nn \in Z}$ of complexes of $Q$-modules whose direct sum
$C^\spot$ constitutes a constructible complex of $G$-modules; that is,
for each $i \in \ZZ$,
$$
  C^i = \bigoplus_{\nn \in Z} C^i_\nn(-\nn)
      = \bigoplus_{\ggg \in G} C^i_\ggg = \bigoplus_{\nn\qq} C^i_{\nn\qq}
$$
\end{defn}

\begin{prop}\label{p:constructible-cohomology}
Fix a constructible family of complexes $\{C^\spot_\nn\}_{\nn \in Z}$ of
$Q$-modules.  For each $i \in \ZZ$ the cohomologies $\{H^i(C^\spot_\nn)\}_{\nn
\in Z}$ form a constructible family of $Q$-modules.
\end{prop}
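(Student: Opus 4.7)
The plan is to reduce the claim to the corresponding fact about individual constructible complexes, exploiting the direct-sum interpretation of families in Definition~\ref{d:family} and the fact that cohomology commutes with direct sums of complexes.

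First, I would unpack the hypothesis: by Definition~\ref{d:complexes-constructible}, the $G$-graded complex $C^\spot = \bigoplus_{\nn \in Z} C^\spot_\nn(-\nn)$ is a constructible complex of $G$-modules, meaning each $C^i$ is of class $\XX = \text{semisimple}$ and each differential $d^i : C^i \to C^{i+1}$ is a class~$\XX$ morphism in the sense of Definition~\ref{d:morphism-tame}. This places us squarely in the framework of \S\ref{b:X-flat}, so I can invoke the fact already used in the proof of Theorem~\ref{t:X-flat}: the cohomology of a class~$\XX$ complex is of class~$\XX$, via \cite[Proposition~4.30.1]{hom-alg-poset-mods}. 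Hence $H^i(C^\spot)$ is a constructible $G$-module.

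Next, I would verify the compatibility between $G$-graded cohomology and the $Z$-slice decomposition. Because the differentials $d^i$ are $G$-module homomorphisms, they preserve the $Z$-grading, so they restrict on each slice $\{\nn\} \times Q$ to the differentials of the subcomplex $C^\spot_\nn(-\nn)$. Kernels and images therefore decompose as direct sums over~$\nn \in Z$, yielding the canonical identification
\[
  H^i(C^\spot) \;=\; \bigoplus_{\nn \in Z} H^i(C^\spot_\nn)(-\nn).
\]
Combined with the conclusion of the previous step, this exactly matches Definition~\ref{d:family}: the $Q$-modules $H^i(C^\spot_\nn)$ assemble into a $G$-family whose associated $G$-module is constructible. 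Hence $\{H^i(C^\spot_\nn)\}_{\nn \in Z}$ is a constructible family of $Q$-modules, as required.

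There is no real obstacle here; the proposition is essentially a repackaging of the class~$\XX$ stability of cohomology through the lens of Rees-monoid-graded families. The only point one needs to be careful about is that ``constructible family'' is defined via the $G$-module structure of the total direct sum, not degree-by-degree over~$Z$, so one must observe that both being class~$\XX$ as a $G$-module and being a $Z$-indexed family are preserved by the cohomology functor simultaneously. Both preservations are automatic: the former from \cite[Proposition~4.30.1]{hom-alg-poset-mods}, the latter from the $Z$-homogeneity of the differentials.
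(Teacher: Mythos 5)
Your proof is correct and follows essentially the same route as the paper's, resting on the same key fact that kernels and cokernels of class~$\XX$ morphisms are class~$\XX$ \cite[Proposition~4.30.1]{hom-alg-poset-mods} together with Proposition~\ref{p:valid-class-X} to justify $\XX=\text{semisimple}$. Your version merely spells out the $Z$-homogeneity of the differentials and the resulting slice decomposition of $H^i(C^\spot)$, which the paper's one-line proof leaves implicit.
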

\begin{proof}
This follows directly from Proposition~\ref{p:valid-class-X} and the
fact that kernels and cokernels of class~$\XX$ morphisms are of
class~$\XX$ \cite[Proposition~4.30.1]{hom-alg-poset-mods}.
\end{proof}

\subsection{Quasipolynomiality from constructibility}\label{b:quasipolynomiality}\mbox{}

\medskip
\noindent 
Quasipolynomial consequences of constructibility of course pass
through the numerical version,\hspace{-.8pt} via
an unassuming lemma,\hspace{-.8pt} the tie that binds tameness to
Presburger~\mbox{arithmetic}.

\begin{lemma}\label{l:unassuming}
Any constructible family is numerically constructible.
\end{lemma}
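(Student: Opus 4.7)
The plan is to unwind the definitions and observe that the witnessing data for constructibility of a family is already the witnessing data for numerical constructibility. Fix a constructible family $\{M_\nn\}_{\nn \in Z}$ over a Rees monoid $G_+$ with $G = Z \times Q$, and let $\cM = \bigoplus_{\nn \in Z} M_\nn(-\nn) = \bigoplus_{\ggg \in G} \cM_\ggg$ denote the associated $G$-module.

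First I would recall that by Definition~\ref{d:family}, constructibility of the family means that $\cM$ is a constructible $G$-module, and by Definition~\ref{d:constructible}, $\cM$ is then of class~$\XX$ with $\XX = \text{semisimple}$. By Definition~\ref{d:tame}, this yields (i)~finite dimension $\dim_\kk \cM_\ggg < \infty$ for every $\ggg \in G$, and (ii)~a finite constant subdivision $G = \bigcupdot_{\alpha \in A} S_\alpha$ subordinate to~$\cM$ in the sense of Definition~\ref{d:constant-subdivision}, whose regions~$S_\alpha$ are semisimple subsets of~$G$.

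Next I would verify numerical subordinacy. By Definition~\ref{d:constant-subdivision}, subordinacy of the subdivision to~$\cM$ means that for each region~$S_\alpha$ there is a single vector space~$\cM_\alpha$ equipped with isomorphisms $\cM_\alpha \to \cM_\ggg$ for every $\ggg \in S_\alpha$. Consequently $\dim_\kk \cM_\ggg = \dim_\kk \cM_\alpha$ is constant on each region~$S_\alpha$, which is exactly the definition of numerical subordinacy (Definition~\ref{d:numerically-subordinate}). Combined with the finite-dimensionality established above, this shows that $G$ admits a semisimple subdivision numerically subordinate to $V = \bigoplus_{\nn\qq \in G} \cM_{\nn\qq}$ with $\dim_\kk \cM_{\nn\qq} < \infty$ for every~$\nn\qq \in G$. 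By Definition~\ref{d:numerical-family} the family is numerically constructible.

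There is essentially no obstacle here: the whole content is that a constant subdivision forces dimensions to be constant on regions, so every bit of module-theoretic data beyond dimension counts (the isomorphisms, their naturality, the absence of monodromy) is simply discarded. Numerical constructibility really is just the dimensional shadow of constructibility, and semisimplicity of the subdivision is inherited verbatim.
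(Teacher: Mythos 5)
Your proof is correct and takes essentially the same approach as the paper: the paper's proof is a one-sentence observation that the isomorphisms in a constant subdivision force the dimensions $\dim_\kk \cM_\ggg$ to be constant on each region, so semisimple constructibility of~$\cM$ immediately gives a semisimple subdivision numerically subordinate to~$\cM$. Your proposal simply spells out the same definitional unwinding in more detail.
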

\begin{proof}
Any semisimple subdivision subordinate to the module $\cM$ associated to
a constructible $G$-family has $\dim_\kk M_\ggg$ constant for $\ggg$ in any
constant region of~$G$.
\end{proof}

The following two examples show that the converse of
Lemma~\ref{l:unassuming} need not hold.

\begin{example}\label{e:not-presburger-family}
Assume the situation of Example~\ref{e:numerical} with $G_+ = \NN
\times Q_+$, and further assume that the monomial ideal~$I$ is proper
and nonzero.  The family $\{M_n\}_{n \in \ZZ}$ is numerically
constructible by Example~\ref{e:numerical}, but there is no way to
endow $\cM = \bigoplus_{n\in\ZZ} M_n(-n)$ with a $G$-module structure
in such a way that it is a constructible $G$-module.  Indeed,
homomorphisms between non-adjacent slices can't be isomorphisms,
because any map $\kk[x,y] \to \kk[x,y]/I \oplus I$ of $Q$-modules must
contain~$I$ in the kernel, and any $Q$-module map $\kk[x,y]/I \oplus I
\to \kk[x,y]$ must vanish on the summand $\kk[x,y]/I$, which is
torsion.  Consequently, any constant subdivision subordinate to any
$G$-module structure on~$\cM$ must place each odd slice $\kk[x,y]$ in
a different region than every other odd slice.
This prevents a finite constant subdivision.
\end{example}

\begin{example}\label{e:semisimple}
Fix $Q = \ZZ$ and $Q_+ = \NN$.  Let $G = \ZZ \times Q = \ZZ^2$
with positive cone~$\color{darkgreen}G_+$ (depicted in
{\color{darkgreen}green}) generated by $[\twoline 01]$ and
$\color{darkpurple}[\twoline 11]$
(the action of the latter depicted in {\color{darkpurple}purple}).
Let $\beta_0 \beta_1 \beta_2 \ldots$ be a transcendental binary string.  Then set
\begin{itemize}
\item%
$M_n = \kk[\NN]$ if $\beta_n = 0$;
\item%
$M_n = \kk \oplus \kk\{1 + \NN\}$ if $\beta_n = 1$, where $\kk \cong
\kk[\NN]/\kk\{1 + \NN\}$; and
\item%
$M_n = 0$ if $n < 0$.
\end{itemize}
\vspace{-1ex}
$$
\text{For example, with $\beta = 01000101100\ldots$\ :\qquad}\hfill
\begin{array}{@{}c@{}}
  \psfrag{x}{\raisebox{-.5ex}{\footnotesize$Q_+$}}
  \psfrag{y}{\footnotesize${}$}
  \psfrag{M0}{\tiny$\!\!M_0$}
  \psfrag{M1}{\tiny$\!\!M_1$}
  \psfrag{M2}{\tiny$\!\!M_2$}
  \psfrag{M3}{\tiny$\!\!M_3$}
  \psfrag{M4}{\tiny$\!\!M_4$}
  \psfrag{M5}{\tiny$\ \vdots$}
  \includegraphics[height=19ex]{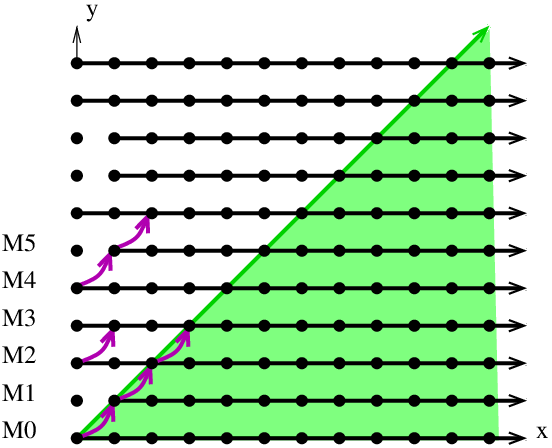}
\end{array}
\qquad\qquad\qquad\quad\ \ \,
$$
Note that $\cM = \bigoplus_{n \in \ZZ} M_n(-n)$ is a $G$-module, where every nonzero
element of~$G_+$ acts as a (shifted) inclusion on the $\kk[\NN]$-free
summands and as~$0$ on the torsion summands~$\kk$.  Thus $G_+$ acts by
translation along slices (as $Q_+$ does) or takes each $Q$-slice to a
shifted copy of that slice in higher levels.  These shifts ensure that
the aperiodic wiggling of the slice placements does not prevent the
free summand of a lower slice from landing inside of the free summand
of any higher slice.

This family~$\{M_n\}_{n \in \ZZ}$ is numerically constructible, with
two constant regions:\ the nonnegative quadrant of $G \cong \ZZ^2$
(where all of the vector-space dimensions are~$1$) and its complement
(where all of the vector-space dimensions are~$0$).

However, the module~$\cM$ is not constructible because, for example, the
locations of the vector space summands~$\kk$ occur at a subset of $\ZZ
\times \{0\}$ whose transcendence prevents it from being semisimple.
To be more precise, $\cM$ dominates a finite constant subdivision with
just three regions:
\begin{itemize}
\item%
the torsion degrees $[\twoline n0]$, each of which indexes a torsion
summand $\kk$ when $\beta_n = 1$;
\item%
the torsion-free degrees, comprising all other nonnegative vectors
in~$\ZZ^2$; and
\item%
the complement of the nonnegative quadrant, where $\cM$ vanishes.
\end{itemize}
Thus $\cM$ is tame but not semisimply constructible.
%
%
%
\end{example}

\subsection{Constructible families of ideals}\label{b:from-ideals}\mbox{}

\medskip
\noindent
Many main results in this paper assert that some operation on a
constructible family yields another constructible family.  That is,
turning some crank preserves constructibility and hence can yield
intricate output with piecewise quasipolynomial numerics.  But where
is the recursion to start?  The purpose of this section is to provide
rich sources of constructible families comprising---or in some cases,
arising from---monomial ideals.

\begin{defn}\label{d:classical-defs}
Let $I$ and $K$ be monomial ideals of $R = \kk[Q_+]$.
\begin{enumerate}
\item\label{i:newton-polyhedron}%
The \emph{Newton polyhedron} of~$I$ is the convex hull $\NP(I)$ of
$\{\qq \in Q_+ \mid \xx^\qq \in I\}$.

\item\label{i:integral-closure}%
The \emph{integral closure} of $I$ is ideal $\oI$ whose monomials
have some $k$th power in~$I^k$:
$$
  \oI = \kk\{\qq \in Q \mid \xx^{k\qq} \in I^k \text{ for some } k \in \NN\}.
$$

\item\label{i:saturation}%
The \emph{saturation} of $I$ \emph{with respect to}~$K$ is the ideal
$$
  I:_R K^\infty
  =
  \bigcup_{r \gs 0} I :_R K^r.
$$

\item\label{i:symbolic}%
The $n$th~\emph{symbolic power} of~$I$ is
$$
  I^{(n)} = \bigcap_{\pp \in \Min(I)} (I^n R_\pp \cap R),
$$
where $\Min(-)$ denote the set of minimal primes.

\item\label{i:multiplier-ideal}%
$R$ is \emph{$\QQ$-Gorenstein} if some symbolic power of the ideal
$\kk\{Q_+^\circ\}$ is principal, where $Q_+^\circ \subset Q_+$ is the upset of lattice
points interior to~$Q_+$.  If $\kk\{Q_+^\circ\}^{(r)} = \kk[\ww + Q_+]$, then
the \emph{multiplier ideal} of~$I$ is the monomial ideal $\cJ(I)$ such
that
$$
  \xx^\qq \in \cJ(I)
  \iff
  \qq + \mathbf{w}/r \in \NP(I)^\circ,
$$
the interior of the Newton polyhedron~$\NP(I)$.
\end{enumerate}
\end{defn}

\begin{remark}\label{r:classical-defs}
The item numbers here refer to those in
Definition~\ref{d:classical-defs}.
\begin{enumerate}
\item[\ref{i:integral-closure}.]%
It is nontrivial that $\oI$ is an ideal, but in fact $\oI = \kk\{\NP(I)
\cap Q\}$ \cite[\S1.4]{swanson-huneke2006}.

\item[\ref{i:saturation}.]%
The saturation $I^\sat = I :_R \mm^\infty$ of~$I$ with respect to the
homogeneous maximal ideal $\mm = \kk\{Q_+\minus\{\0\}\}$ of~$R$ is often called
simply \emph{the saturation of~$I$}.

\item[\ref{i:symbolic}.]%
Write $\Ass(-)$ for the associated primes and $\Ass^\infty(I) = \bigcup_{n \gs 1}
\Ass(I^n)$.  Then
$$
  I^{(n)} = I^n :_R K^\infty \text{ for the ideal }
  K = \bigcap_{\pp \in \Ass^\infty(I) \minus \Min(I)} \pp.
$$
The set $\Ass^\infty(I)$ is finite by \cite{brodman79} in much more
generality than the monomial case here, but this finiteness is obvious
over an affine semigroup ring, because the positive cone~$Q_+$ has
only finitely many faces.

\item[\ref{i:multiplier-ideal}.]%
$\cJ(I)$ is the characteristic-free combinatorial characterization
from \cite[Theorem 4.8]{hara-yoshida2003} (see also
\cite{howald2001}); multiplier ideals have its origins and more
general definitions in algebraic geometry \cite{lazarsfeld2004}.  The
$\QQ$-Gorenstein condition is satisfied when, for instance, $Q_+$ is
smooth (Remark~\ref{r:smooth}).
\end{enumerate}
\end{remark}

The following technical lemma is needed for the proof of
Theorem~\ref{t:ideals-families-strong}.

\begin{lemma}\label{l:uniformizing}
Fix a smooth affine semigroup $N \subseteq \ZZ^n$ and a linear map $\lambda: \ZZ^n \to \ZZ$.
Assume the
hyperplane~$H = \lambda^{-1}(1)$
intersects every extremal ray of the real cone $\RR_{\gs 0} N$.  There
exists a smooth affine subsemigroup $N'\subseteq N$ and a positive integer $r$
such that $N \cap rH \subseteq N'$.  For all such $N'$ there exists a finite
partition $N = \bigcupdot_i(\nn_i + N')$.
\end{lemma}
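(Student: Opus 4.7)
The plan is to extract a concrete form for $N'$ from the hypothesis that $H$ meets every extremal ray of $\RR_{\gs 0} N$, and then to read off the partition directly. Write the generators of the smooth affine semigroup as $\nn_1,\dots,\nn_k$, so that $N = \NN\nn_1 \oplus \cdots \oplus \NN\nn_k$ with the $\nn_i$ linearly independent in $\ZZ^n$. Because $H = \lambda^{-1}(1)$ meets every extremal ray $\RR_{\gs 0}\nn_i$ of $\RR_{\gs 0} N$, each integer $d_i := \lambda(\nn_i)$ is strictly positive. I set $r := \mathrm{lcm}(d_1, \ldots, d_k)$ and $m_i := r/d_i \in \NN_{>0}$, so that $m_i\nn_i \in N \cap rH$ for every $i$. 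The existence claim is then trivial, as $N' := N$ itself is a smooth affine subsemigroup with $N \cap rH \subseteq N'$.

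For the partition claim, I fix any smooth affine subsemigroup $N' \subseteq N$ with $N \cap rH \subseteq N'$ for the above $r$, and I show that $N'$ must share its real cone with $N$. Indeed, the inclusion $N' \supseteq \{m_1\nn_1, \ldots, m_k\nn_k\}$ provides $k$ linearly independent elements, forcing $\rank N' \geq k$; combined with $N' \subseteq N$ of rank $k$, this gives $\rank N' = k$. Moreover $\RR_{\gs 0}N'$ contains $\RR_{\gs 0}(m_i\nn_i) = \RR_{\gs 0}\nn_i$ for every $i$, so $\RR_{\gs 0}N' \supseteq \RR_{\gs 0}N$; the reverse inclusion is automatic, and the cones coincide.

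Since this simplicial cone has extremal rays $\RR_{\gs 0}\nn_1, \ldots, \RR_{\gs 0}\nn_k$ and $N'$ is smooth of rank~$k$, its $k$ generators must, after reindexing, each lie on one of these rays; as they are elements of the lattice $N = \bigoplus \NN\nn_i$, each takes the form $e_i \nn_i$ for some positive integer~$e_i$. Thus $N' = \sum_{i=1}^k \NN(e_i\nn_i)$, and the desired disjoint partition is
$$
  N = \Bigcupdot_{(b_1,\ldots,b_k) \in \prod_i \{0,\ldots,e_i-1\}}
      \Bigl(\sum_{i=1}^k b_i\nn_i + N'\Bigr),
$$
an explicit splitting into $e_1 \cdots e_k$ translates by elements of~$N$. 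The main obstacle is the step identifying the generators of $N'$ as integer multiples $e_i\nn_i$ of the generators of $N$, which requires combining simpliciality of the cone with smoothness of $N'$ (so each generator of $N'$ is an atom, forced onto an extremal ray) and the saturated coordinate structure of $N = \bigoplus\NN\nn_i$ (so a positive rational multiple of $\nn_i$ lying in $N$ is a positive integer multiple). Everything else is bookkeeping around the choice $r = \mathrm{lcm}(d_i)$.
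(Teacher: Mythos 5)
Your proof is correct and follows the same basic approach as the paper's: both rescale the generators $\nn_i$ by $m_i = r/d_i$ so that they all land on the hyperplane $rH$, and both read off the partition from the observation that the rescaled generators are positive integer multiples of the originals. Where you diverge is in how seriously you take the universal quantifier ``for all such $N'$.'' The paper constructs the specific $N'$ generated by the rescaled points $r\ppp_i$ and then simply asserts ``the last statement holds since the generators of $N'$ are multiples of those of $N$,'' with no explanation of why \emph{every} admissible $N'$ has that form. You actually prove it: any smooth $N'\subseteq N$ containing $N\cap rH$ contains the $m_i\nn_i$, hence has the same rank and the same real cone as $N$; a smooth rank-$k$ semigroup's $k$ generators sit on the $k$ extremal rays of its simplicial cone, and since those generators lie in $N=\bigoplus\NN\nn_i$ they must be $e_i\nn_i$ for positive integers $e_i$; the box partition by residues $\prod_i\{0,\ldots,e_i-1\}$ then falls out. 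That argument is a genuine improvement in rigor.

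Two small remarks. First, your ``existence'' step with $N':=N$ is literally correct but deflationary: the point of the lemma in its application (Theorem~\ref{t:ideals-families-strong}) is to produce an $N'$ whose generators all lie on the \emph{same} hyperplane slice $rH$, and $N'=N$ won't provide that. Fortunately your partition argument only uses $N'\ni m_i\nn_i$, not the full containment $N\cap rH\subseteq N'$, so it also applies to the useful $N'=\sum_i\NN(m_i\nn_i)$ whose generators are all at level $r$. Second, you call $N=\bigoplus\NN\nn_i$ a ``lattice''; it is a free commutative monoid, and the relevant lattice is $\ZZ N$. The argument is unaffected, but the wording is loose.
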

\begin{proof}
Let $\qq_1, \ldots, \qq_r$ be the generators of~$N$ and $\ppp_i = \RR
\qq_i \cap H \in \QQ N$.  Fix $a_i,b_i \in \ZZ_{>0}$ such that $a_i
\ppp_i = b_i \qq_i$.  The positive integer $r$ can be taken to be any
positive integer multiple of the product of the~$a_i$.  The submonoid
$N'$ is then generated by $r\ppp_1, \ldots, r\ppp_t$.  The last
statement holds since the generators of~$N'$ are multiples of those
of~$N$.
\end{proof}

The following theorem and proposition provide a wide range of examples
of constructible families originating from ideals.  Note in particular
item~\ref{i:constant}, which allows the starting point to be any
monomial ideal.  As a matter of notation, assume the customary
convention that $I^n = R$ whenever $n \ls 0$.

\begin{thm}\label{t:ideals-families-strong}
Fix an affine semigroup $Q_+$ with monoid algebra $R = \kk[Q_+]$.  Let
$\{I_n\}_{n \in \ZZ}$ be a family of monomial ideals in~$R$ that is
constructible over the Rees monoid of a nonzero monomial ideal $J \subseteq
R$.  Fix a monomial ideal $K \subseteq R$.  Each of the following is a
constructible family over the indicated Rees monoid~$G_+$.
\begin{enumerate}
\item\label{i:constant}%
The constant family $ \{K\}_{n \in \ZZ}$, over the Rees monoid $G_+ = \NN \times
Q_+$.
\item\label{i:powers}%
The powers $\{I_n^{an + b}\}_{n \in \ZZ}$ for any fixed $a,b \in \ZZ$, over
the Rees monoid~$G_+$ of $J^{a}$.
\item\label{i:integral-closurs}%
The integral closures $\{\ol{I_n}\}_{n \in \ZZ}$, over the Rees
monoid~$G_+$ of~$J$.
\item\label{i:multipliers}%
For $\QQ$-Gorenstein~$R$, the multiplier ideals
$\{\cJ(I_n)\hspace{-.37pt}\}_{n \in \ZZ}$, over the Rees
monoid~of~$J$.
\end{enumerate}
\end{thm}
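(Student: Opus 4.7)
The plan is to apply Proposition~\ref{p:rees-is-presburger}: a family $\{M_n\}$ of monomial ideals is constructible over the Rees monoid of $J_1, \ldots, J_k$ if and only if (a)~its top set is semisimple (equivalently, membership is Presburger, by Lemma~\ref{l:membership}) and (b)~each $J_i$ is contained in $\bigcap_n M_{n+\ee_i} :_R M_n$. For each of the four items I~would verify (a)~and~(b) separately, taking as input that $\{I_n\}$ already satisfies both conditions over the Rees monoid of~$J$, so that its top set is semisimple and $J \cdot I_n \subseteq I_{n+1}$ holds for every~$n$.

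Item~\ref{i:constant} is immediate: the top set of the constant family $\{K\}$ is $\ZZ \times T_K$, which is semisimple because $Q_+$ is an affine semigroup (so $K$ is finitely generated and $T_K$ is a finite set), while the colon condition reduces to the tautology $R \subseteq K :_R K$. For items~\ref{i:integral-closurs} and~\ref{i:multipliers}, I~would exploit the Newton-polyhedral characterizations from Remark~\ref{r:classical-defs}, namely $\overline{I_n} = \kk\{\NP(I_n) \cap Q\}$ and $\cJ(I_n) = \kk\{(\NP(I_n)^\circ - \ww/r) \cap Q\}$, the latter via the $\QQ$-Gorenstein hypothesis combined with Lemma~\ref{l:uniformizing} to supply the canonical vector $\ww/r$ uniformly in~$n$. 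Semisimplicity of the top sets is preserved under convex hull, interior, rational translation, and lattice intersection in view of Theorem~\ref{t:presburger}, so~(a) carries over from $\{I_n\}$; the colon conditions $J \cdot \overline{I_n} \subseteq \overline{J \cdot I_n} \subseteq \overline{I_{n+1}}$ and $J \cdot \cJ(I_n) \subseteq \cJ(I_{n+1})$ then follow from the Minkowski-sum compatibility of these operations with ideal products, combined with the baseline $J \cdot I_n \subseteq I_{n+1}$.

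Item~\ref{i:powers} is the principal technical step. The colon condition $J^a \cdot I_n^{an+b} \subseteq I_{n+1}^{a(n+1)+b}$ is to be established by pairing each of the $a$ factors of~$J$ with one factor of~$I_n$ via $J \cdot I_n \subseteq I_{n+1}$, producing $a$ factors of~$I_{n+1}$, and then absorbing the remaining $I_n^{an+b-a}$ into the appropriate power of~$I_{n+1}$ via the graded Rees structure. For the Presburger membership, I~would avoid directly quantifying over $an+b$ separate summands (not a fixed-arity formula) by first applying Proposition~\ref{p:atoms-definable} to express the top set of $\{I_n\}$ as a finite disjoint union of semisimple atoms indexed by type, then parameterizing a monomial in $I_n^{an+b}$ by multiplicities~$\lambda_i$ of those atom types subject to the single linear constraint $\sum \lambda_i = an + b$, which is a legitimate Presburger equation.

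The main obstacle I~expect is precisely this Presburger membership for item~\ref{i:powers}: the decomposition of a monomial in $I_n^{an+b}$ along the atom types of $\{I_n\}$ must remain uniform as~$n$ varies across the semisimple subdivision of that top set, so that the existential quantifiers over atom representatives and multiplicities can be collected into one fixed-arity Presburger formula. Reconciling the local decompositions globally will likely require a uniformization argument in the spirit of Lemma~\ref{l:uniformizing}, refining the semisimple subdivision if needed. By contrast, the colon conditions in items~\ref{i:integral-closurs} and~\ref{i:multipliers} should amount to formal manipulations with Newton polyhedra and their Minkowski sums, and the semisimplicity of the corresponding top sets follows from the Presburger-definability of convex hulls and interiors of Presburger-definable sets in~$\ZZ \times Q$.
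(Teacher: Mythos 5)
Your overall strategy matches the paper's: both reduce via Proposition~\ref{p:rees-is-presburger} to verifying (Step~1) the colon condition so that $\bigoplus_n M_n(-n)$ is a $G$-module and (Step~2) that membership is Presburger. Item~\ref{i:constant} is handled the same way, and your anticipation that Lemma~\ref{l:uniformizing} is the crucial technical input for item~\ref{i:powers}---so that the multiplicity parametrization collapses to a fixed-arity Presburger formula with a single constraint $\sum m_i = an+b$---is exactly right and mirrors the paper's proof.

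There are two genuine gaps. The more serious one concerns items~\ref{i:integral-closurs} and~\ref{i:multipliers}, where you claim Presburger membership ``carries over'' because semisimplicity is preserved under convex hull, interior, rational translation, and lattice intersection. That principle is not justified in the form you need it: the object here is a \emph{fiberwise} convex hull parametrized by~$n$, and the facets of~$\NP(I_n)$ are cut out by linear forms whose coefficients can themselves depend on~$n$, producing $n\cdot\qq$ cross-terms that a priori destroy Presburger definability. The paper does not invoke any such closure principle; it writes explicit Presburger formulas---for item~\ref{i:integral-closurs} encoding $\qq\in\ol{I_n}\iff\exists\,k,\ \ppp_1,\ldots,\ppp_k\in S,\ k\qq\succeq\sum_i\ppp_i$, and for item~\ref{i:multipliers} a variant with $k\qq\pm\ee_i$---built from \emph{the same} semisimple partition of the top set and \emph{the same} uniformization (Lemma~\ref{l:uniformizing}) that you reserved for item~\ref{i:powers}. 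So the work you hoped to localize to item~\ref{i:powers} is needed across items~\ref{i:powers}--\ref{i:multipliers}. (Relatedly, invoking Lemma~\ref{l:uniformizing} ``to supply the canonical vector $\ww/r$ uniformly in~$n$'' is a confusion: $\ww$ and $r$ are fixed by the $\QQ$-Gorenstein hypothesis alone and do not vary with~$n$.) The second gap is your Step~1 argument for item~\ref{i:powers}: after pairing $a$ copies of~$J$ with $a$ copies of~$I_n$ to land in $I_{n+1}^a$, you still need $I_n^{an+b-a}\subseteq I_{n+1}^{an+b}$, which does not follow from $J\,I_n\subseteq I_{n+1}$ alone since there is no hypothesis $I_n\subseteq I_{n+1}$. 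The paper labels Step~1 for items~\ref{i:constant}--\ref{i:powers} ``immediate'' without detail, but your proffered pairing argument, as written, does not actually close it.
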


\begin{proof}
By Proposition~\ref{p:rees-is-presburger}, showing that each of these
families $\{M_n\}_{n \in \ZZ}$ is constructible over the given Rees monoid
$G_+$ is proved in two steps.
\settowidth\leftmargini{\quad Step 2.}
\begin{itemize}
\item[Step~1.]%
$\cM = \bigoplus_{n \in \ZZ} M_n(-n)$ is a $G$-module, where $G=\ZZ G_+$.
\item[Step~2.]%
Membership in $\{M_n\}_{n \in \ZZ}$ is Presburger
(Definition~\ref{d:membership}).
\end{itemize}
Step~1 is immediate for items~\ref{i:constant}--\ref{i:powers}.  For
item~3 it follows from \cite[Corollary 6.8.6]{swanson-huneke2006}.
For item~4, note that $J\cJ(I_n)\subseteq \cJ(JI_n)\subseteq\cJ(I_{n+1})$, since adding a
point in $\NP(J)$ with an interior point in $\NP(I_n)$ leads to a
point in the interior of~$\NP(JI_n)$.

The interesting part is Step~2.  It follows for item~\ref{i:constant}
because $K$ has a finite set of monomial generators, so focus on
items~\ref{i:powers}--\ref{i:multipliers}.  As usual when dealing with
explicit Presburger formulas, identify $Q \cong \ZZ^d$.

For item~\ref{i:powers}, let $S \subset \ZZ \times \ZZ^d$ be the set of all $n\qq$ with
$\qq$ a minimal monomial generator of~$I_n$.  By
Lemma~\ref{l:membership} $S$ is Presburger definable.
Theorem~\ref{t:presburger} and Definition~\ref{d:semisimple} afford a
finite partition $S = \bigcupdot_{i=1}^c (n_i\qq_i + N_i)$ with $n_i \qq_i \in \ZZ \times
\ZZ^d$ and $N_i \subseteq \ZZ \times \ZZ^d$ smooth affine semigroups.  It must be that
$\bigl(\{0\} \times \ZZ^d\bigr) \cap N_i = \{0\} \times \{\0\}$, since each $I_n$ has
a finite number of generators.  Therefore, by
Lemma~\ref{l:uniformizing}, assume that for each~$i$ all generators
of~$N_i$ have the same first component.  Let $F_i(n,\qq)$ be a
Presburger formula that defines membership in~$N_i$.
Thus membership in $\{I_n^{an+b}\}_{n\in\ZZ}$ is defined by
\[
\begin{split}
F(n,\qq)
  =
  (\exists m_1, & \dots,  m_c \in \NN, \exists \aa_1, \dots, \aa_c \in \ZZ^d)
\\&
    \Big(\bigwedge_{i=1}^c  F_i(m_i(n-n_i),\aa_i)
    \wedge
    \sum_{i=1}^cm_i=an+b\wedge\qq \succeq \sum_{i=1}^c (m_i\qq_i + \aa_i)
    \Big),
\end{split}
\]
where $\succeq$ denotes the partial order on $Q$. 

For item~\ref{i:integral-closurs} the set
$$
  \Bigl\{n\qq \mid \exists k \in \NN, \ppp_1, \ldots, \ppp_k \in S, k\qq \succeq \sum_{i=1}^k \ppp_i\Bigr\},
$$
with $S$ as above, needs to be Presburger definable.  And indeed it
is, via the formula
\[
\begin{split}
  G(n,\qq)
  =
  (\exists k, m_1, & \dots, m_c \in \NN, \exists \aa_1, \dots, \aa_c \in \ZZ^d)
\\&
  \Bigl(
    \bigwedge_{i=1}^c F_i(m_i(n-n_i), \aa_i)
    \wedge
    \sum_{i=1}^c m_i = k
    \wedge
    k\qq \succeq \sum_{i=1}^c (m_i\qq_i + \aa_i)
  \Bigr).
\end{split}
\]

For item~\ref{i:multipliers} it suffices to show that the set $\bigcup_{n\in\ZZ}
\bigl\{\{n\} \times \bigl(\NP(I_n)^\circ \cap \ZZ^d\bigr)\bigr\}$ is Presburger
definable.  This set can expressed as the intersection
\[
\begin{split}
  &\Bigl\{n\qq \mathbin{\big|} \exists k \in \NN, \ppp_1, \ldots, \ppp_k \in S,
          k\qq + \ee_i \succeq \sum_{i=1}^k \ppp_i\Bigr\}
\\
  \cap\,
  &\Bigl\{n\qq \mathbin{\big|} \exists k \in \NN, \ppp_1, \ldots, \ppp_k \in S,
          k\qq - \ee_i \succeq \sum_{i=1}^k \ppp_i\Bigr\},
\end{split}
\]
for $1 \ls i \ls d$.  The proof now follows since each of these finitely
many sets can be defined using a slight variation of the formula
$G(n,\qq)$.
\end{proof}

\begin{prop}\label{p:ideals-families}
Assume $Q_+$ is an affine semigroup and set $R=\kk[Q_+]$.  
Let  $\{I_\nn\}_{\nn \in \ZZ^k}$ be a family of monomial ideals that is constructible over the Rees monoid of  nonzero monomial ideals $J_1,\ldots,J_k$. Fix a monomial ideal $K$. The following are constructible families over the specified Rees monoid. 
\begin{enumerate}
\item\label{i:colons}%
The saturations $ \{I_\nn:_R K^{\infty}\}_{\nn \in \ZZ^k}$, over the  Rees monoid of $J_1,\ldots,J_k$.
\item%
The colon ideals $\{I_\nn :_RK^{\aa\cdot\nn+b}\}_{\nn \in \ZZ^{k}}$ for fixed $\aa =
(a_1, \ldots, a_k) \in \ZZ^k$ and $b \in \ZZ$, over the Rees monoid of $J_1 :_R
K^{a_1}, \ldots, J_k :_R K^{a_k}$.
\end{enumerate}\setcounter{separated}{\value{enumi}}
Let $\{K_\nn\}_{\nn \in \ZZ^k}$ be another family of monomial ideals that is
constructible over the Rees monoid of nonzero monomial ideals $H_1, \ldots,
H_k$.  The following are constructible families over the Rees monoid
of $J_1 H_1, \ldots, J_k H_k$.
\begin{enumerate}\setcounter{enumi}{\value{separated}}
\item%
The products $\{I_\nn K_\nn\}_{\nn \in \ZZ^{k}}$.
\item%
The sums $\{I_\nn +K_\nn\}_{\nn \in \ZZ^{k}}$.
\item%
The intersections $\{I_\nn \cap K_\nn\}_{\nn \in \ZZ^k}$.
\item\label{i:quotients}%
The quotients $\{I_\nn/K_\nn\}_{\nn \in \ZZ^{k}}$, when $K_\nn \subseteq I_\nn$ for every~$\nn$.
\end{enumerate}\setcounter{separated}{\value{enumi}}
Let $\{Y_\mmm\}_{\mmm \in \ZZ^\ell}$ be another family of monomial ideals that is
constructible over the Rees monoid of nonzero monomial ideals $X_1, \ldots,
X_\ell$.  The following are constructible families over the Rees monoid
of $J_1, \ldots, J_k, X_1, \ldots, X_\ell$.
\begin{enumerate}\setcounter{enumi}{\value{separated}}
\item\label{i:sums}%
The sums $\{I_\nn + Y_\mmm\}_{\nn\mmm \in \ZZ^{k+\ell}}$.
\item\label{i:products}%
The products $\{I_\nn Y_\mmm\}_{\nn\mmm \in \ZZ^{k+\ell}}$.
\item\label{i:intersections}%
The intersections $\{I_\nn \cap Y_\mmm\}_{\nn\mmm \in \ZZ^{k+\ell}}$.
\end{enumerate}
\end{prop}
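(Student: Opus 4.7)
The plan is to apply Proposition~\ref{p:rees-is-presburger} in each of the nine cases. This requires verifying two things for each family: (i) the ideal containments ensuring that the direct sum carries the claimed Rees monoid structure, and (ii) Presburger definability of membership in the family. Condition (i) reduces to routine algebra. For products we use $(J_i I_\nn)(H_i K_\nn) \subseteq I_{\nn+\ee_i} K_{\nn+\ee_i}$; for intersections, the inclusion $J_i H_i (I_\nn \cap K_\nn)$ lands inside both $J_i I_\nn$ and $H_i K_\nn$ and hence in $I_{\nn+\ee_i} \cap K_{\nn+\ee_i}$; for the quotient, well-definedness of the action on equivalence classes requires exactly $H_i K_\nn \subseteq K_{\nn+\ee_i}$, which holds by hypothesis; for saturations and the linear colon, the identity $(J :_R K^a)(I :_R K^c) \subseteq JI :_R K^{a+c}$ (with the convention $K^n = R$ for $n \ls 0$) supplies the containment; the doubly indexed operations in~\ref{i:sums}--\ref{i:intersections} follow the same pattern, with $J_i$ acting on the $Y_\mmm$ factor trivially (since $Y_\mmm$ is an ideal), and $X_j$ acting on $I_\nn$ trivially.

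Condition (ii) is obtained by writing an explicit Presburger formula in each case. Fix Presburger formulas $F(\nn,\qq)$ and $G(\nn,\qq)$ defining membership in $\{I_\nn\}$ and $\{K_\nn\}$, respectively (afforded by Proposition~\ref{p:rees-is-presburger}), with an analogous formula $G'(\mmm, \qq)$ for $\{Y_\mmm\}$, and let $\uu_1, \ldots, \uu_s$ be the exponents of the minimal monomial generators of the fixed ideal~$K$. For sums $I_\nn + K_\nn$, use $F(\nn,\qq) \vee G(\nn,\qq)$; for intersections, use $F(\nn,\qq) \wedge G(\nn,\qq)$; for products, use $(\exists \aa, \bb \in Q)(F(\nn,\aa) \wedge G(\nn,\bb) \wedge \qq \succeq \aa + \bb)$, where $\succeq$ is the partial order on~$Q$; for quotients, use $F(\nn,\qq) \wedge \neg G(\nn,\qq)$ (the support of $I_\nn/K_\nn$ is poset-convex because $K_\nn$ is an upset inside the upset~$I_\nn$). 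For the saturation $I_\nn :_R K^\infty$, use $(\exists c \in \NN)(\forall a_1, \ldots, a_s \in \NN)\bigl(\sum_i a_i = c \implies F(\nn, \qq + \sum_i a_i \uu_i)\bigr)$; for the parametrized colon $I_\nn :_R K^{\aa \cdot \nn + b}$, replace $c$ with the linear expression $\aa \cdot \nn + b$ in the same formula, adding a disjunct handling the case $\aa \cdot \nn + b \ls 0$ (where the colon equals $I_\nn$). The doubly indexed operations use the evident analogues of the single-index formulas in the enlarged parameter group $\ZZ^{k+\ell}$.

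The main obstacle, such as it is, is the bookkeeping for the colon and saturation cases: one must verify that universal quantification over $s$-tuples $(a_1, \ldots, a_s)$ summing to~$c$ actually captures membership in~$K^c$ (rather than just the generating set), and that the linear dependence of the exponent on~$\nn$ in the colon case remains within the Presburger fragment (which it does, since $\aa \cdot \nn + b$ is an affine-linear integer expression in the free variables). Once the formulas are written down and checked, Presburger definability is immediate, and Proposition~\ref{p:rees-is-presburger} together with Corollary~\ref{r:restrict-rees-monoid} (to accommodate passage to sub-Rees monoids such as $J_i H_i \subseteq J_i$) delivers constructibility over the specified Rees monoids in all nine parts.
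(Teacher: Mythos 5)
Your proposal is correct and follows exactly the same approach as the paper's own (very terse) proof, which simply says to run ``the argument as in Step~1 and Step~2 from Theorem~\ref{t:ideals-families-strong}''; that is precisely the two-part structure you execute via Proposition~\ref{p:rees-is-presburger}. Two small remarks: the identity $(J :_R K^a)(I :_R K^c) \subseteq JI :_R K^{a+c}$ is only valid for $a, c \gs 0$ --- with the convention $K^n = R$ for $n \ls 0$ it can fail when exactly one of $a, c$ is negative, so the colon case merits a slightly more careful case split when $a_i < 0$ or $\aa\cdot\nn + b < 0$ (your proof handles the latter via a disjunct, but the former requires the separate elementary estimate $J_i(I_\nn :_R K^c) \subseteq I_{\nn+\ee_i} :_R K^{c}$ together with $I_{\nn+\ee_i} :_R K^{c} \subseteq I_{\nn+\ee_i} :_R K^{\max(c+a_i,0)}$ when $a_i < 0$); and the invocation of Corollary~\ref{r:restrict-rees-monoid} is unnecessary, since you verify the $G$-module containments directly against the specified Rees monoid rather than passing from a larger one.
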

\begin{proof}
The argument as in Step~1 and Step~2 from
Theorem~\ref{t:ideals-families-strong} is straightforward.
\end{proof}

\begin{remark}\label{r:symbolic_powers}
Theorem~\ref{t:ideals-families-strong}.\ref{i:powers} and
Proposition~\ref{p:ideals-families}.\ref{i:colons} imply that for
$a,b\in \ZZ$ the symbolic powers $\{I^{(an+b)}\}_{n\in \ZZ}$ form a
constructible family over the Rees monoid of $I$.
\end{remark}

\begin{example}\label{e:combine-constructions}
The different operations in Theorem~\ref{t:ideals-families-strong}
combine with Proposition~\ref{p:ideals-families} to produce intricate
constructible families.  For instance, when $R = \kk[\NN^d]$ and $I, J, K$
are monomial ideals, the family $\{\ol{\mathcal{J}(I^n:J^{3n-2}) :_R
K^\infty}\}_{n\in\NN}$ is constructible over the Rees monoid of $I :_R J^3$.
\end{example}

\begin{example}\label{e:multigraded-families}
Proposition~\ref{p:ideals-families}.\ref{i:sums}--\ref{i:intersections}
may be used to produce constructible families indexed by $\ZZ^k$ for $k
> 1$, such as the family $\{I^{2n-1}\ol{J^{5m+1}}K^{(p)}\}_{(n,m,p) \in
\ZZ^3}$ of monomial ideals, which is constructible over the Rees monoid
of $I^2, J^5, K$.
\end{example}

\begin{remark}  
In Lemma~\ref{l:annihilators-constructible} it is shown that given a
monomial ideal $I$ and a constructible family $\{M_\nn\}_{\nn\in\ZZ^k}$ of
$Q$-modules, the submodules of the $M_\nn$ consisting of the elements
annihilated by $I$ also form a constructible family.
\end{remark}

\begin{remark}\label{r:multiplier-general}
There is a more general definition of multiplier ideals over arbitrary
normal rings \cite{deFernex-Hacon2009}, adapted to the monomial case
in \cite{hsiao-matusevich2018}.  The conclusion of
Theorem~\ref{t:ideals-families-strong}.\ref{i:multipliers} still holds
with that general definition.
\end{remark}

\subsection{Products of ideals and modules}\label{b:products}\mbox{}

\medskip
\noindent
Additional constructible families of modules arise by multiplying any
given family by ideals in a constructible family.  The two main points
in the proof are that (i)~a module is constructible if and only if it
admits an upset presentation with semisimple upsets, by the syzygy
theorem \cite[Theorem~6.12.4]{hom-alg-poset-mods}, and (ii)~Minkowski
sums of semisimple sets (in particular, upsets) are semisimple.

The proposition making this precise requires an elementary lemma,
which codifies a situation with two Rees monoids that occurs
repeatedly in subsequent sections.

\begin{lemma}\label{l:double-rees-monoid}
Fix Rees monoids $G_+$ and~$H_+$ over~$Q$ with $G = Z \times Q$ and $H = Y
\times Q$.  The monoid $K_+ = G_+ \times_Q H_+$ defined by
$$
  \kk[K_+]
  =
  \kk[G_+] \otimes_Q \kk[H_+]
$$
is a Rees monoid over~$Q$, called the \emph{double Rees monoid}
of\/~$G_+$ and\/~$H_+$ over\/~$Q$.
\end{lemma}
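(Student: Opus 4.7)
The plan is to identify $K_+$ explicitly as a submonoid of the ambient group $K := Z \times Y \times Q$, then verify the three conditions defining a Rees monoid over~$Q$ (Definition~\ref{d:rees-monoid}).  I would embed $G \hookrightarrow K$ via $(z, q) \mapsto (z, 0, q)$ and $H \hookrightarrow K$ via $(y, q) \mapsto (0, y, q)$, so that both embeddings agree on the common $Q$-slice $\{0\} \times \{0\} \times Q$.  Because the tensor product of commutative monoid algebras over $\kk[Q_+]$ realizes the pushout of commutative monoids over~$Q_+$, and that pushout maps into the group pushout~$K$ with image equal to the Minkowski sum, the defining relation $\kk[K_+] = \kk[G_+] \otimes_{\kk[Q_+]} \kk[H_+]$ identifies $K_+$ with $G_+ + H_+$ inside~$K$.

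Next I would check that $K$ with positive cone $K_+$ is a Presburger group.  Semisimplicity of $K_+$ follows from semisimplicity of $G_+$ and $H_+$ together with closure of Presburger definable sets under Minkowski sum, exactly as in the proof of Proposition~\ref{p:valid-class-X}.  For fullness, the containments $\mathbb{Z}G_+ \subseteq Z \times \{0\} \times Q$ and $\mathbb{Z}H_+ \subseteq \{0\} \times Y \times Q$ both have finite index, and these two coordinate subgroups span~$K$, so every element of~$K$ has a positive integer multiple in $\mathbb{Z}G_+ + \mathbb{Z}H_+ \subseteq \mathbb{Z}K_+$.

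The two remaining verifications are short.  For the intersection condition, any $(0, 0, q) \in K_+$ decomposes as $(0, q_1) + (0, q_2)$ with $(0, q_i)$ in the respective embedded cones, forcing $q_1, q_2 \in Q_+$ by the Rees monoid property of $G_+$ and $H_+$, hence $q \in Q_+$; the converse is immediate.  For trivial unit group, if both $\mathbf{k}$ and $-\mathbf{k}$ lie in $K_+$, decompose each as a sum from $G_+$ and $H_+$ and add: the resulting zero element of~$K$ has $G$-component and $H$-component individually in $\{0\} \times Q_+$, and trivial unit groups of~$Q_+$, $G_+$, and $H_+$ successively force $\mathbf{k} = 0$.

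The main obstacle I anticipate is the opening identification $K_+ = G_+ + H_+$: one must confirm that the monoid pushout over~$Q_+$ injects into the group pushout~$K$, i.e., that the tensor product algebra is not forced to identify any additional pairs.  This reduces to showing that whenever $g_1 + h_1 = g_2 + h_2$ in~$K$ with $g_i \in G_+$ and $h_i \in H_+$, the two decompositions are connected by elementary moves $(g + (0, q), h) \sim (g, h + (0, q))$ for $q \in Q_+$; the slack $a = \pi_Q(g_1 - g_2) \in Q$ can be transported between the $G_+$- and $H_+$-factors using the Rees monoid property $G_+ \cap (\{0\} \times Q) = \{0\} \times Q_+$ and its analogue for~$H$, completing the identification.
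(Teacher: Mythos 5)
Your approach matches the paper's: embed $G$ and $H$ into $K = Z \times Y \times Q$, identify $K_+$ with the Minkowski sum $G_+ + H_+$, and verify the Rees monoid axioms. The explicit checks of semisimplicity, fullness, the slice condition $K_+ \cap (\{0\} \times \{0\} \times Q) = \{0\} \times \{0\} \times Q_+$, and triviality of the unit group are all correct and considerably more detailed than the paper's one-sentence proof, which merely asserts the identification without verification.

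The gap is in your final paragraph. You correctly flag that the pushout of commutative monoids $G_+ \from Q_+ \to H_+$, whose monoid algebra is $\kk[G_+]\otimes_{\kk[Q_+]}\kk[H_+]$, must inject into $K$ for the displayed equation of the lemma to hold; but the ``transport the slack'' argument does not close the gap. Each elementary move shifts $g$ by an element of $\pm Q_+$, so any chain of moves shifts $g$ by an element of $\ZZ Q_+$, while the slack $a = \pi_Q(g_1-g_2)$ is only constrained to lie in~$Q$, and fullness permits $\ZZ Q_+$ to have index greater than~$1$ in~$Q$. Concretely, take $Q_+ = \langle(2,0),(1,1),(0,2)\rangle \subseteq \ZZ^2 = Q$, so $\ZZ Q_+ = \{(a,b) : a+b \text{ even}\}$, and let $G_+ = H_+ \subseteq \ZZ^3$ be generated by $\{0\}\times Q_+$ together with $(1,1,0)$ and $(1,0,0)$ (this is a Rees monoid over~$Q$). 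Then $\bigl((1,1,0),(1,0,0)\bigr)$ and $\bigl((1,0,0),(1,1,0)\bigr)$ are two decompositions of $(1,1,1,0) \in K$ whose slack $(1,0)$ lies outside $\ZZ Q_+$, so they remain distinct in the pushout, and the displayed equation fails. The paper's own proof passes over this step silently; its downstream use in Theorem~\ref{t:tor-family} first replaces $G_+$ by a free reduction via Proposition~\ref{p:free-reduction}, and for a free Rees monoid the slack automatically lies in $\ZZ Q_+$ with all intermediate pairs staying in $G_+\times H_+$, so the identification does hold where it is actually needed.
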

\begin{proof}
$K_+$ is a Rees monoid over~$Q$ because it is the submonoid $G_+
\times_Q H_+$ of the product $K = Z \times Y \times Q$ generated by
the images of~$G_+$ and~$H_+$ under their embeddings via $G_+ \into G
\cong Z \times \{\0\} \times Q \subseteq K$ and $H_+ \into H \cong
\{\0\} \times Y \times Q \subseteq K$.
\end{proof}

\begin{prop}\label{p:multiply-ideal-module}
\!Fix Rees monoids $G_+\!$ and~$H_+\!$ over~$Q$ with $G
\hspace{-.75pt}=\hspace{-.75pt} Z \times Q$ and~$H
\hspace{-.75pt}=\hspace{-.75pt}\nolinebreak Y \!\times\nolinebreak Q$.  If
$\{M_\nn\}_{\nn \in Z}$ is a constructible $G$-family of\/ $Q$-modules and
$\{I_\mmm\}_{\mmm \in Y}$ is a constructible $H$-family of ideals in~$\kk[Q_+]$
with $I_\0 = \kk[Q_+]$, then over
$K_+ = G_+ \times_Q H_+$
\begin{enumerate}
\item\label{i:products'}%
the products $\{I_\mmm M_\nn\}_{\nn\mmm \in Z \times Y}$ form a
constructible family of $Q$-modules, and
\item\label{i:quotients'}%
the quotients $\{M_\nn/I_\mmm M_\nn\}_{\nn\mmm \in Z \times Y}$ form a
constructible family of $Q$-modules.
\end{enumerate}
\end{prop}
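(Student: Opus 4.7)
My plan is to exhibit $\cN$ as the image of a class~$\XX$ morphism between two constructible $K$-modules, deducing constructibility from \cite[Proposition~4.30.1]{hom-alg-poset-mods}.  Following the authors' hint, the syzygy theorem \cite[Theorem~6.12]{hom-alg-poset-mods} lets me replace $\cM$ by an upset presentation $F_0 \twoheadrightarrow \cM$ with $F_0 = \bigoplus_{j=1}^r \kk\{U_j\}$ a finite direct sum of semisimple upset modules of~$G$; multiplication of $F_0$ by the ideals $I_\mmm$ then reduces to Minkowski sums of the~$U_j$ with the supports of the~$I_\mmm$, an operation that preserves semisimplicity by the existential-quantifier argument of Proposition~\ref{p:valid-class-X}.

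First I inflate $\cM$ to a $K$-module $\bar\cM = \bigoplus_{(\nn,\mmm) \in Z \times Y} M_\nn(-\nn,-\mmm)$ whose $K_+$-action factors through the $G_+$-action on~$\cM$:\ any $(z, y, q) \in K_+ = G_+ \times_Q H_+$ satisfies $(z, q) \in G_+$ unambiguously, so the $Y$-component of $K_+$-elements only shifts the $Y$-grading.  Any semisimple constant subdivision $G = \bigcupdot_\alpha G_\alpha$ subordinate to~$\cM$ pulls back along the projection $K \to G$ to a subdivision $K = \bigcupdot_\alpha (G_\alpha \times Y)$ subordinate to~$\bar\cM$; each $G_\alpha \times Y$ is semisimple in~$K$ because $Y \cong \ZZ^\ell$ is itself semisimple.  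Inflating $F_0$ to $\bar{F_0}$ analogously, the surjection $F_0 \twoheadrightarrow \cM$ inflates to a class~$\XX$ surjection $\bar{F_0} \twoheadrightarrow \bar\cM$.

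Next I introduce $\cI F_0 = \bigoplus_{j=1}^r \kk\{T_j\}$ with $T_j = \{(\nn, \mmm, \qq) \in K : \qq \in A_\mmm + (U_j)_\nn\}$, where $A_\mmm \subseteq Q_+$ is the support of~$I_\mmm$ and $(U_j)_\nn = \{\qq \in Q : (\nn, \qq) \in U_j\}$.  Presburger-definability of the total support of~$\cI$ (afforded by Proposition~\ref{p:rees-is-presburger}) together with semisimplicity of~$U_j$ yields a Presburger formula for~$T_j$ via existential quantifiers as in Proposition~\ref{p:valid-class-X}, while the $K_+$-upset property of~$T_j$ follows from $U_j$ being a $G_+$-upset and each $A_\mmm$ being a $Q_+$-upset compatibly with $K_+ = G_+ \times_Q H_+$.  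The inclusion $T_j \subseteq U_j \times Y$ yields a $K$-submodule inclusion $\cI F_0 \hookrightarrow \bar{F_0}$ of upset modules, class~$\XX$ by Lemma~\ref{l:upset-criterion-class-X}.

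Composing yields a class~$\XX$ morphism $\mu: \cI F_0 \to \bar\cM$ whose $(\nn,\mmm)$-slice is the composition $I_\mmm (F_0)_\nn \hookrightarrow (F_0)_\nn \twoheadrightarrow M_\nn$, with image exactly~$I_\mmm M_\nn$.  Thus $\cN = \image(\mu)$ is constructible by \cite[Proposition~4.30.1]{hom-alg-poset-mods}, establishing~(1); and for~(2) the cokernel $\bar\cM/\cN$ is the $K$-module associated to $\{M_\nn/I_\mmm M_\nn\}$ and is constructible as the quotient of two constructible $K$-modules.  The main obstacle I anticipate is verifying that $\mu$ is genuinely class~$\XX$ rather than merely having class~$\XX$ source and target:\ this requires exhibiting a common semisimple refinement of the subdivisions subordinate to $\cI F_0$, $\bar{F_0}$, and $\bar\cM$ on which the composite satisfies the no-monodromy condition of Definition~\ref{d:morphism-tame}.
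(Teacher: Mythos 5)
Your outline takes the same overall strategy as the paper's proof — pass to an upset presentation of~$\cM$, Minkowski-sum its upsets with the support of~$\cI$, and recover $\cN$ as the image of the resulting class~$\XX$ morphism between constructible $K$-modules — so the architecture is sound. The no-monodromy concern you flag at the end is readily dispatched: composites of class~$\XX$ morphisms are class~$\XX$ after refining to a common subordinate subdivision, which stays class~$\XX$ because the family is closed under finite intersections, and the paper appeals to exactly such a refinement. The genuine gap sits earlier, buried in your first step.

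You assert that any $(z, y, q) \in K_+ = G_+ \times_Q H_+$ has $(z, q) \in G_+$, so that the inflation $\bar\cM = \bigoplus_{\nn\mmm} M_\nn(-\nn,-\mmm)$ inherits a $K_+$-action from the $G_+$-action on~$\cM$. This is \emph{not} automatic from Definition~\ref{d:rees-monoid}, and without it $\bar\cM$ is not a $K$-module and the argument collapses. Writing $(z, y, q) = (z, \0, q_1) + (\0, y, q_2)$ with $(z, q_1) \in G_+$ and $(y, q_2) \in H_+$, the projection $(z, q) = (z, q_1 + q_2)$ lies in~$G_+$ exactly when $q_2 \in Q_+$; but a Rees monoid $H_+$ over~$Q$ need not satisfy $H_+ \subseteq Y \times Q_+$, since the definition only pins down its zero-slice. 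What rescues the statement is the hypothesis $I_\0 = \kk[Q_+]$, which your argument never invokes: the associated upset $H$-module $\cI = \kk\{T\}$ has $\cI_{\0\0} = (I_\0)_\0 = \kk$, so $(\0,\0) \in T$, and $T$ being an $H_+$-upset forces $(\mmm,\qq) \in T$ for every $(\mmm,\qq) \in H_+$, hence $\qq \in T_\mmm \subseteq Q_+$. This is precisely what the middle portion of the paper's proof establishes through the ambient Rees monoid $H^A_+ = \pi_Y(H_+) + Q_+$ and the inclusion chain $\kk[H_+]_\mmm = I_\0\,\kk[H_+]_\mmm \subseteq I_\mmm \subseteq \{\mmm\} \times Q_+$. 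The authors thank Lukas Waas for catching an error in an earlier proof of this very proposition, so the subtlety is easy to miss and must be spelled out; once you do, your argument matches theirs.
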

\begin{proof}
Since each ideal~$I_\mmm$ is the upset module $\kk\{T_\mmm\} \subseteq
\kk\{Q\}$ for an upset $T_\mmm \subseteq Q$
(Definition~\ref{d:indicator-module}), the $H$-module $\cI =
\bigoplus_{\mmm \in Y} I_\mmm(-\mmm)$ associated to $\{I_\mmm\}_{\mmm
\in Y}$ is the upset $H$-module $\kk\{T\}$ for a semisimple upset $T
\subseteq H$ whose slices parallel to~$Q$ are $\{T_\mmm\}_{\mmm \in
Y}$.  If $U \subseteq G$ is any upset, then in~$K = Z \times Y \times
Q$, identifying $T$ with $\{\0\} \times T$ and $U$ with $U \times
\{\0\}$, the Minkowski sum $T + U \subseteq K$ is an upset whose slice
parallel to~$Q$ through $\nn\mmm \in Z \times Y$ is $T_\mmm + U_\nn$.
If $U$ is semisimple then the Minkowski sum $T + U$ is semisimple
because any Minkowski sum of semisimple sets is semisimple (the proof
of Proposition~\ref{p:valid-class-X} shows this, for example).

The family $\{I_\mmm\}_{\mmm \in Y}$ of ideals is a subfamily of the
\emph{ambient family} $\{A_\mmm\}_{\mmm \in Y}$ that has $A_\mmm =
\kk[Q_+]$ instead of $I_\mmm$ whenever $I_\mmm \neq 0$,~so
$$
  \{A_\mmm\}_{\mmm \in Y}
  =
  \{\kk[Q_+] \mid \mmm \in Y \text{ and } I_\mmm \neq 0\}.
$$
We claim that the ambient family is constructible whenever
$\{I_\mmm\}_{\mmm \in Y}$ is.  To see why, note first that any linear
projection of a semisimple set is semisimple by
Theorem~\ref{t:presburger}, because the image of an affine semigroup
under any linear map is finitely generated and hence another affine
semigroup.  Now apply this observation to the projections of~$H_+$
and~$T$ to~$Y$.  It yields semisimple subsets $\pi_Y(H_+)$ and
$\pi_Y(T)$ inside of~$Y$.  Let
$$
  H^A_+ = \pi_Y(H_+) + Q_+
  \qquad\text{and}\qquad
  S = \pi_Y(T) + Q_+
$$
be the \emph{ambient Rees monoid} constructed from~$H_+$ and the
\emph{ambient upset} constructed from~$T$.  These are both semisimple,
given that $Q_+$ is semisimple---again, because Min\-kowski sums of
semisimple sets are semisimple---and $\kk\{S\}$ is an upset
$H^A$-module by construction.  In fact, $\kk\{S\}$ is the $H^A$-module
associated to the ambient family.

The original family $\{I_\mmm\}_{\mmm \in Y}$ and the ambient family
$\{A_\mmm\}_{\mmm \in Y}$ are two constructible families of ideals
of~$\kk[Q_+]$, with $I_\mmm \subseteq A_\mmm$ for all~$\mmm \in Y$.
However, their Rees monoids need not coincide.  (The reason: it is
unlikely that $H^A_+ + T \subseteq T$, because in most circumstances
the ideals~$I_\mmm$ decrease as~$\mmm$ gets farther from the origin.)
That said, hypothesis $I_\0 = \kk[Q_+]$ implies that
$$
  \kk[H_+]_\mmm
  =
  \kk[Q_+]\,\kk[H_+]_\mmm
  =
  I_\0\,\kk[H_+]_\mmm
  \subseteq
  I_\mmm
  \subseteq
  \{\mmm\} \times Q_+,
$$
so the slice of~$H^+$ parallel to~$Q$ over $\mmm \in Y$ is contained
in the corresponding slice of~$H^A_+$.  It follows that $H^A_+
\supseteq H_+$.  Therefore the ambient family is an $H$-family, so it
is constructible over the Rees monoid~$H_+$.

Going back to the first paragraph of the proof, think of $U$ as an
upset in an upset presentation (Definition~\ref{d:upset-presentation})
over~$\kk[G_+]$ of the associated $G$-module $\cM = \bigoplus_{\nn \in
Z} M_\nn(-\nn)$:
$$
  \bigoplus_{\rho=1}^r \kk\{U^\rho\}
  \to
  \bigoplus_{\sigma=1}^s \kk\{U^\sigma\}
$$
which exists by the syzygy theorem for constructible modules
\cite[Theorem~6.12.4]{hom-alg-poset-mods} applied to the class~$\XX =$
semisimple via Proposition~\ref{p:valid-class-X} and
Definition~\ref{d:tame}.  The goal is to compare what happens when
$S$~and~$T$ are added to the upsets in this presentation.

Adding the ambient upset~$S$ yields an upset presentation
$$
  \bigoplus_{\rho=1}^r \kk\{S + U^\rho\}
  \to
  \bigoplus_{\sigma=1}^s \kk\{S + U^\sigma\}
$$
of the \emph{ambient} $K$-module $\cM^A$ whose component in degree
$\mmm \in Y$ is~$M_\nn$, independent of~$\mmm$, by computating as in
the first paragraph of the proof.  The target
in this presentation has the $K$-submodule $\bigoplus_{\sigma=1}^s
\kk\{T + U^\sigma\}$, whose image in~$\cM^A$ is~$\cI\cM$ since slicing
this image through $\nn\mmm \in Z \times Y$ yields the image of
$\bigoplus_{\sigma=1}^s \kk\{T_\mmm +\nolinebreak U^\sigma_\nn\}
\!\to\!  M_\nn$, which is~$I_\mmm M_\nn$.  The induced $K$-module
homomorphism $\bigoplus_{\sigma=1}^s \kk\{T + U^\sigma\} \to \cM^A$ is
constructible because any semisimple subdivision of~$K$ subordinate to
the given presentation of~$\cM^A$ becomes subordinate to the induced
homomorphism upon common refinement with
the subdivisions of~$K$ induced by the
semisimple upsets $T + U^\sigma$
for~all~$\sigma$.

Constructibility of the products
in item~\ref{i:products'} follows because kernels and cokernels---and
hence images---of constructible morphisms are constructible
\cite[Proposition~4.30.1]{hom-alg-poset-mods}.  Constructibility of
the products
in item~\ref{i:quotients'} now uses only the cokernel part.
\end{proof}

\begin{example}\label{e:direct-sums}
Since constructibility behaves well under direct sums
\cite[Proposition~4.30]{hom-alg-poset-mods}, additional constructible
families ensue.  For example, if $M$ is a constructible module, then
the family $\{M_n\}_{n\in\NN}$ of $Q$-modules for
$$
  M_n = \mathcal{J}(I^n :_R J^n)/\ol{I^n}
        \oplus (I^{(n)}/I^n)
        \oplus (I^n :_R J^\infty)
	\oplus I^n M.
$$
is constructible over the Rees monoid of~$I$.
\end{example}

\begin{remark}\label{r:class-X-not-presburger}
The proof of Proposition~\ref{p:multiply-ideal-module} uses that
Minkowski sums of semisimple sets are semisimple.  The statement and
proof generalize to any class~$\XX$ that is closed under Minkowski
sums, such as when $\XX = $ semialgebraic or piecewise-linear.
\end{remark}


\section{Functors preserving constructibility of families}\label{s:functors-on-families}

Applying the results in \S\ref{s:functors}---particularly
Theorems~\ref{t:cech}, \ref{t:tor}, and~\ref{t:ext}---in the context of
constructible families (Definition~\ref{d:family}) leads to the
conclusion that various functors applied to constructible families
yield constructible families.  This section makes this family-wise
application precise, with subsections numbered as in
\S\ref{s:functors}.

Throughout this section, assume $Q$ is a Presburger group
(Definition~\ref{d:presburger-group}).

\subsection{Local cohomology}\label{b:local-cohomology-families}

\begin{thm}\label{t:loc-coh-family}
Let $\{M_\nn\}_{\nn\in Z}$ be a constructible family of~$Q$-modules over
a Rees monoid~$G_+$ with $G \hspace{-1pt}=\hspace{-1pt} Z \times Q$ and fix
a monomial ideal~$I$ in the monoid algebra~$R
\hspace{-1pt}=\hspace{-1pt}\nolinebreak \kk[Q_+]$.  The family
$\bigl\{H^i_I(M_\nn)\bigr\}_{\nn\in Z}$ of local cohomology modules is
constructible~over~$G_+$~for~all\/~$i$.
\end{thm}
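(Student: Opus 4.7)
The plan is to reduce the family-wise statement to the single-module Theorem~\ref{t:cech}, applied over~$G$ rather than over~$Q$. Let $\cM = \bigoplus_{\nn \in Z} M_\nn(-\nn)$ be the constructible $G$-module associated to the family (Definition~\ref{d:family}). Fix monomial generators $m_i = \xx^{\qq_i}$ of~$I$ for $i = 1,\ldots,r$, and view them as elements of~$\kk[G_+]$ via the inclusion of the zero-slice $\{\0\} \times Q_+ \hookrightarrow G_+$ guaranteed by Definition~\ref{d:rees-monoid}; let $\widetilde I$ denote the monomial ideal they generate in~$\kk[G_+]$.

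First I would verify the hypotheses of Theorem~\ref{t:cech} for $\cM$ as a module over~$G$. The group~$G$ is Presburger by Definition~\ref{d:rees-monoid}, hence class~$\XX$ for $\XX = \text{semisimple}$, and $\cM$ is constructible by hypothesis. Since every semisimple set is Minkowski summable (Remark~\ref{r:always-satisfied}), the summability hypothesis on each generating semigroup $\NN \cdot (\0,\qq_i)$ is automatic. Theorem~\ref{t:cech} therefore yields that $H^i_{\widetilde I}(\cM)$ is a constructible $G$-module.

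The second step is to identify $H^i_{\widetilde I}(\cM)$ with the associated $G$-module of the family $\{H^i_I(M_\nn)\}_{\nn \in Z}$. Because each $m_i$ sits in the zero-slice of~$G$, the localization $\kk[G_+][m_i^{-1}]$ is homogeneous of $Z$-degree~$\0$, so tensoring with it preserves the $Z$-grading and commutes with $Z$-slicing. Consequently the $\nn$-slice of the $G$-complex $\vC^\spot_{\widetilde I}(\cM) = \cM \otimes_{\kk[G_+]} \vC^\spot_{\widetilde I}$ equals $M_\nn \otimes_{\kk[Q_+]} \vC^\spot_I = \vC^\spot_I(M_\nn)$, and since cohomology is a degreewise operation in the $Z$-grading, $H^i_{\widetilde I}(\cM)_\nn = H^i_I(M_\nn)$. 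By Definition~\ref{d:family} the family $\{H^i_I(M_\nn)\}_{\nn \in Z}$ is therefore constructible over~$G_+$.

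I do not anticipate a real obstacle: the only delicate point is the compatibility of the ``$G$ versus $Q$'' viewpoint for local cohomology, but localization at a $Z$-degree~$\0$ element is transparently compatible with $Z$-slicing, so this is routine. An alternative framing routes through Proposition~\ref{p:constructible-cohomology}: exhibit $\{\vC^\spot_I(M_\nn)\}_{\nn \in Z}$ as a constructible family of complexes by identifying its associated $G$-complex with $\vC^\spot_{\widetilde I}(\cM)$ (constructible by Theorem~\ref{t:cech}), and then take cohomology slicewise.
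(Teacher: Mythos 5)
Your proof is correct and follows essentially the same approach as the paper: view the monomial generators of $I$ in the zero-slice $\{\0\}\times Q_+$ of $G_+$, apply Theorem~\ref{t:cech} to the constructible $G$-module $\cM$, and observe that because the generators have $Z$-degree~$\0$, the \v Cech complex and its cohomology slice correctly along~$Z$ to recover $\{H^i_I(M_\nn)\}_{\nn\in Z}$. Your write-up is somewhat more explicit than the paper's about checking the hypotheses of Theorem~\ref{t:cech} and about why localization commutes with $Z$-slicing, but there is no substantive difference in the argument.
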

\begin{proof}
Let $\cM = \bigoplus_{\nn \in Z} M_\nn(-\nn)$.  The ideal $I$ is generated by
monomials $\xx^{\qq_1}, \ldots, \xx^{\qq_r}$ with $\qq_i \in Q_+$.  View these
generators as lying in $\kk\bigl[G_+ \cap \bigl(\{\0\} \times Q\bigr)\bigr] =
\kk\bigl[\{\0\} \times Q_+\bigr]$ by Definition~\ref{d:rees-monoid}, and use
them to construct the \v Cech complex~$\vC^\spot_I(\cM)$ to deduce that
$H^i_I(\cM) = \bigoplus_{\nn \in Z} H^i_I(M_\nn)(-\nn)$ for all~$i$.  Since
$\cM$ is a constructible $G$-module by hypothesis,
Theorem~\ref{t:cech} implies that $\vC^\spot_I(\cM)$ is a constructible
complex of~$G$-modules and~$H^\spot_I(\cM)$ is a constructible $G$-module.
\end{proof}

\subsection{Tensor products and Tor}\label{b:tensor-products-families}\mbox{}

\medskip
\noindent
The next definition is made for convenience, to avoid needlessly
repeating the extensive hypotheses and notation of
Lemma~\ref{l:tensor-prod}.

\begin{defn}\label{d:pair-of-families}
The families $\{M_\nn\}_{\nn \in Z}$ and~$\{N_\mmm\}_{\mmm \in Y}$ of
$Q$-modules over~$G = Z \times Q$ and~$H = Y \times Q$ in
Lemma~\ref{l:tensor-prod} form a \emph{pair indexed by the double Rees
monoid}~$K_+$, with associated $G$-module $\cM = \bigoplus_{\nn \in Z}
M_\nn(-\nn)$ and $H$-module~$\cN =\nolinebreak \bigoplus_{\mmm \in Y}
N_\mmm(-\mmm)$.
\end{defn}

Recall the notion of semisimply flat module from
Definition~\ref{d:X-flat}.  The proof of Theorem~\ref{t:tor-family}
uses existence of semisimply flat resolutions---where the module in each
homological degree is semisimply flat---over affine semigroup rings.
The claim is of sufficient utility to separate it off into its own
lemma.

\begin{lemma}\label{l:noetherian-or-artinian}
If $N$ is a localization of a noetherian or artinian module over an
affine semigroup ring~$\kk[Q_+]$, then $N$ admits a semisimply flat
resolution and a semisimply injective resolution as a $Q$-module.
\end{lemma}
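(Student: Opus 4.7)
The plan is to lift resolutions from~$N_0$ to $N=(N_0)_S$ by localization. By iterating Proposition~\ref{p:localization}, I may assume $S$ is a face~$F$ of~$Q_+$, since inverting an addition-closed subset of~$Q_+$ is the same as inverting the smallest face containing the support faces of its elements. Combining Remark~\ref{r:noetherian-assump}, Remark~\ref{r:injectives}, and Matlis duality (used in the artinian case by passing through the noetherian module~$N_0^\vee$: a free resolution of~$N_0^\vee$ dualizes to a semisimply injective resolution of~$N_0$, and a semisimply injective resolution of~$N_0^\vee$ dualizes to a semisimply flat resolution of~$N_0$) furnishes $N_0$ with both a semisimply flat resolution $C_\spot$ and a semisimply injective resolution $I^\spot$. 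Since localization is exact, $(C_\spot)_F$ and $(I^\spot)_F$ are resolutions of~$N$, and only the class of their terms needs to be verified.

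The flat side is immediate: each $C_i$ is a finite direct sum of translates of semisimple localizations $\kk[Q_+]_{S_k}$, so $(C_i)_F$ is a finite direct sum of translates of~$\kk[Q_+]_{S_k+F}$, each still a semisimple localization of~$\kk[Q_+]$. The injective side rests on the following dichotomy for each indecomposable injective summand $\kk\{\qq_k+F_k-Q_+\}$ of~$I^j$ (as per Remark~\ref{r:injectives}):
\[
  \kk\{\qq_k+F_k-Q_+\}_F
  \cong
  \begin{cases}
    \kk\{\qq_k+F_k-Q_+\} & \text{if } F \subseteq F_k, \\
    0 & \text{if } F \not\subseteq F_k.
  \end{cases}
\]
In the first case, the identity $F+F_k=F_k$ makes $\xx^f$ act bijectively for every $f \in F$, so localization changes nothing. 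In the second case, pick $f \in F \setminus F_k$ together with a linear functional $\lambda : Q \to \RR$ vanishing on~$F_k$, nonnegative on~$Q_+$, and positive on~$f$; such $\lambda$ exists because $F_k$ is a face of~$\RR_{\gs 0}Q_+$. Then $\lambda \ls 0$ on the downset $\qq_k+F_k-Q_+$ while $\lambda(d+nf)\to+\infty$, so every basis vector is annihilated by a sufficiently high power of~$\xx^f$ and the localization collapses to zero. Hence $(I^j)_F$ is a finite direct sum of the indecomposable injectives that survive, each still injective over~$\kk[Q_+]$ by Remark~\ref{r:injectives}, so $(I^j)_F$ is semisimply injective.

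The main obstacle I expect is this localization dichotomy for indecomposable injectives; beyond it, the argument is exactness of localization combined with the finite combinatorics of faces of~$Q_+$. The artinian case is largely a formality, since any artinian module is annihilated by a power of the homogeneous maximal ideal, forcing $(N_0)_F = 0$ unless $F = \{\0\}$ and thereby reducing to the trivial-localization case, which is already covered by Remark~\ref{r:noetherian-assump}.
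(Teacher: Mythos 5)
Your proof is essentially correct and substantially more explicit than the paper's, which disposes of the lemma with a one-line citation of Remarks~\ref{r:noetherian-assump} and~\ref{r:noetherian-assump'}. Those remarks furnish semisimply flat and semisimply injective resolutions of a noetherian or artinian~$N_0$ directly, but they say nothing explicitly about the localization; you supply the missing step. The route you take---collapsing the inverted set to a single face~$F$ via (iterated) Proposition~\ref{p:localization}, localizing a resolution of~$N_0$, and checking the terms by the dichotomy for indecomposable injectives $\kk\{\qq_k + F_k - Q_+\}$---is exactly the intended fill-in, and the separating-functional argument in the case $F \not\subseteq F_k$ is correct. (The reduction to a face is valid because, for $\qq_1,\qq_2$ in an addition-closed $S \subseteq Q_+$, any face containing $\qq_1+\qq_2$ also contains $\qq_1$ and~$\qq_2$; hence the finitely many support faces $F_\qq$, $\qq\in S$, are directed under inclusion and have a maximum.)

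The one flaw is the claim in the closing aside that ``any artinian module is annihilated by a power of the homogeneous maximal ideal.'' That is false over $\kk[Q_+]$: the injective hull $\kk\{-Q_+\}$ is artinian (its Matlis dual $\kk[Q_+]$ is noetherian) yet $\mm^n \cdot \kk\{-Q_+\} \neq 0$ for every~$n$. What is true is that each \emph{element} is killed by a power of~$\mm$, which still gives $(N_0)_F = 0$ for $F \neq \{\0\}$; alternatively, this follows from the dichotomy you already established, since an artinian $N_0$ embeds in a finite direct sum of translates of $\kk\{-Q_+\}$ (Matlis-dualize a finite generating set of $N_0^\vee$) and localization is exact. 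In any case the aside is superfluous: Remark~\ref{r:noetherian-assump'} already provides an artinian $N_0$ with both semisimply flat and semisimply injective resolutions, so your main argument applies verbatim to the artinian case without any special pleading.
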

\begin{proof}
The claim is immediate from Remarks~\ref{r:noetherian-assump}
and~\ref{r:noetherian-assump'}.
%
%
%
\end{proof}

\begin{lemma}\label{l:tensor-prod}
Fix Rees monoids $G_+$ and~$H_+$ over~$Q$ with $G = Z \times Q$ and~$H
=\nolinebreak Y \times\nolinebreak Q$.  Let $K_+ = G_+ \times_Q H_+$
be their double Rees monoid over~$Q$ (Lemma~\ref{l:double-rees-monoid}).
For any $G$-module~$\cM$ and $H$-module $\cN$, if $\cM^K = \cM \otimes_G \kk[K_+]$
there are natural isomorphisms
$$
  \cM^K = \cM \otimes_Q \kk[H_+]
  \quad\text{and}\quad
  \cM \otimes_Q \cN = \cM^K \otimes_H \cN.
$$
In particular, for a $G$-family $\{M_\nn\}_{\nn \in Z}$ and an
$H$-family $\{N_\mmm\}_{\mmm \in Y}$ of $Q$-modules, if
$$
  \cM = \bigoplus_{\nn \in Z} M_\nn(-\nn)
  \quad\text{and}\quad
  \cN = \bigoplus_{\mmm \in Y} N_\mmm(-\mmm)
\vspace{1ex}%
$$
then\vspace{-3ex}
$$
  \cM \otimes_Q \cN
  =
  \bigoplus_{\nn\mmm} \bigl(M_\nn \otimes_Q N_\mmm\bigr) (-\nn,-\mmm)
  =
  \cM^K \otimes_H \cN.
$$
\end{lemma}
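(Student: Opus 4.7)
The plan is to reduce both displayed isomorphisms to routine associativity of tensor products over the three rings $\kk[Q_+]$, $\kk[G_+]$, and $\kk[H_+]$, and then read off the family-wise decomposition by inspecting gradings. The first step would be to note that by the definition of the double Rees monoid in Lemma~\ref{l:double-rees-monoid}, $\kk[K_+] = \kk[G_+] \otimes_Q \kk[H_+]$, so
\[
\cM^K = \cM \otimes_G \kk[K_+] = \cM \otimes_{\kk[G_+]} \bigl(\kk[G_+] \otimes_{\kk[Q_+]} \kk[H_+]\bigr) \cong \cM \otimes_Q \kk[H_+],
\]
where the final isomorphism is the standard associativity/base-change identity. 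One would verify that this identification is compatible with the $K$-grading: $\cM^K$ inherits its $K = Z \times Y \times Q$-grading from the $\kk[K_+]$-action, while $\cM \otimes_Q \kk[H_+]$ is naturally graded by $Z \times Y \times Q$ by combining the $G = Z \times Q$-grading on $\cM$ with the $Y$-grading of $\kk[H_+]$, and the two structures agree on generators.

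For the second isomorphism, the same trick applies in reverse: substituting the first identification,
\[
\cM^K \otimes_H \cN = \bigl(\cM \otimes_Q \kk[H_+]\bigr) \otimes_{\kk[H_+]} \cN \cong \cM \otimes_Q \cN,
\]
again by associativity, with $\cN$ absorbing the free $\kk[H_+]$-factor. Together these two identifications establish the two displayed equations.

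For the family-wise statement, the plan would be to use the decomposition $\cM = \bigoplus_\nn M_\nn(-\nn)$ and $\cN = \bigoplus_\mmm N_\mmm(-\mmm)$ together with the fact that tensor product commutes with direct sums on each side. The slice of~$\cM$ in $Z$-degree~$\nn$ is $M_\nn$ as a $Q$-module (shifted so that its $Q$-support lives in $\{\nn\} \times Q$), and similarly for $\cN$, so after tensoring over~$\kk[Q_+]$ the summand $M_\nn \otimes_Q N_\mmm$ is precisely the $(\nn,\mmm)$-slice of $\cM \otimes_Q \cN$, yielding the stated decomposition. Combining this with the isomorphism $\cM \otimes_Q \cN \cong \cM^K \otimes_H \cN$ already established completes the proof.

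The main obstacle I anticipate is purely bookkeeping: verifying that the abstract ring-theoretic associativity isomorphisms in fact respect all of the gradings and shift conventions (so that, for example, $M_\nn \otimes_Q N_\mmm$ genuinely appears with the shift $(-\nn,-\mmm)$ rather than being misplaced). There is no homological input needed—no flatness, no constructibility, no noetherian hypothesis—since the lemma is just a naturality/associativity statement; the substance of the lemma is the definition of~$\cM^K$ as a base change to~$\kk[K_+]$, which is what makes the subsequent constructibility-preservation arguments in Theorem~\ref{t:tor-family} go through.
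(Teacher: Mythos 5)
Your argument matches the paper's proof essentially verbatim: the same chain of associativity identifications $\cM^K = \cM \otimes_G \kk[K_+] = \cM \otimes_G \kk[G_+] \otimes_Q \kk[H_+] = \cM \otimes_Q \kk[H_+]$, followed by $\cM^K \otimes_H \cN = \cM \otimes_Q \kk[H_+] \otimes_H \cN = \cM \otimes_Q \cN$, and then the observation that $\otimes_Q$ distributes over the direct-sum decompositions of $\cM$ and $\cN$ to obtain the family-wise formula. Your closing remark that no homological hypotheses are needed is correct and a fair characterization of why the lemma is routine.
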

\begin{proof}
The isomorphisms are computed starting with
\begin{align*}
\cM^K
& = \cM \otimes_G \kk[K_+]
\\*
& = \cM \otimes_G \kk[G_+] \otimes_Q \kk[H_+]
\\*
& = \cM \otimes_Q \kk[H_+],
\end{align*}
which implies that\vspace{-3.7ex}
\begin{align*}
\cM^K \otimes_H \cN
& = \cM \otimes_Q \kk[H_+] \otimes_H \cN
\\
& = \cM \otimes_Q \cN.
\end{align*}
The final sentence follows because $\otimes_Q$ distributes over the direct
sums in $\cM \otimes_Q \cN$.
\end{proof}

\pagebreak[2]

\begin{thm}\label{t:tor-family}
Fix a pair of families $\{M_\nn\}_{\nn \in Z}$ and $\{N_\mmm\}_{\mmm \in
Y}$ of $Q$-modules over~$G = Z \times Q$ and~$H = Y \times Q$ indexed by the
double Rees monoid~$K_+$.  Assume that
\begin{itemize}
\item%
$\cM$ is a constructible $G$-module;
\item%
$\kk[H_+]$ is flat as a module over~$\kk[Q_+]$,
and
\item%
$\cN\!$ admits an $H$-flat resolution that is semisimply flat in
degrees $i-1$, $i$, and~$i+1$.
\end{itemize}
Then $\{\Tor_i^Q(M_\nn, N_\mmm)\}_{\nn\mmm \in Z \times Y}$ is a
constructible family over~$K_+$.
In particular,
$$
  \{\Tor_i^Q(M_\nn, N)\}_{\nn\in Z}
$$
is constructible if the input family is constructible and the fixed
input module is a localization of a noetherian or artinian module over
an affine semigroup ring~$\kk[Q_+]$.
\end{thm}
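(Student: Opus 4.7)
The plan is to reduce $\Tor_i^Q(\cM,\cN)$ to the middle homology of a three-term complex of constructible $K$-modules and then apply \cite[Proposition~4.30.1]{hom-alg-poset-mods}. Take the $H$-flat resolution $F_\spot \to \cN$ from the hypothesis. Because $\kk[H_+]$ is $Q$-flat, each $F_j$ is $Q$-flat, so $F_\spot$ is also a $Q$-flat resolution and $\Tor_i^Q(\cM,\cN) = H_i(\cM \otimes_Q F_\spot)$. Lemma~\ref{l:tensor-prod} rewrites this as $H_i(\cM^K \otimes_H F_\spot)$, naturally $K$-graded, and since $\Tor$ distributes over direct sums, the resulting $K$-module is precisely $\bigoplus_{\nn\mmm} \Tor_i^Q(M_\nn,N_\mmm)(-\nn,-\mmm)$, so constructibility as a $K$-module is the desired conclusion.

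The key step is that $\cM^K = \cM \otimes_Q \kk[H_+]$ is constructible as a $K$-module. I would apply the syzygy theorem (Remark~\ref{r:syzygy-thm}) to present the constructible $G$-module $\cM$ as the cokernel of a morphism $\bigoplus_\rho \kk\{U^\rho\} \to \bigoplus_\sigma \kk\{U^\sigma\}$ of semisimple $G$-upset modules. Tensoring over $\kk[Q_+]$ with the $Q$-flat module $\kk[H_+]$ yields an exact presentation of $\cM^K$ whose summands have the form $\kk\{U\} \otimes_Q \kk[H_+]$. A direct computation shows this tensor equals the $K$-upset module $\kk\{U + H_+\}$, where $U + H_+ \subseteq K$ is the Minkowski sum upon embedding $U \subseteq G$ and $H_+ \subseteq H$ into~$K$: for any two pairs $(u,h), (u',h')$ mapping to the same $K$-degree, decomposing $u'_Q - u_Q = q^+ - q^-$ with $q^\pm \in Q_+$ produces an intermediate pair $(u + (\0, q^+), h - (\0, q^+))$, which lies in $U$ by the upset property and in $H_+$ because $h - (\0, q^+) = h' + (\0, q^-)$ belongs to the monoid~$H_+$. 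Each $U + H_+$ is a semisimple $K$-upset because Minkowski sums of semisimple sets are semisimple (Proposition~\ref{p:valid-class-X}), so $\cM^K$ admits a semisimple $K$-upset presentation and is constructible by the syzygy theorem.

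With $\cM^K$ constructible, the three-term complex $\cM^K \otimes_H F_{i-1} \from \cM^K \otimes_H F_i \from \cM^K \otimes_H F_{i+1}$ becomes a constructible complex of $K$-modules. Each $F_j$ for $j \in \{i-1, i, i+1\}$ is a finite direct sum of semisimple localizations of~$\kk[H_+]$, so tensoring with $\cM^K$ over $H$ gives a direct sum of localizations of $\cM^K$ along semisimple Minkowski summable subsets of $H_+ \subseteq K_+$, which remain constructible by Lemma~\ref{l:summable}. The morphisms are of class~$\XX$ by Corollary~\ref{c:X-flat-complex} and stay so after tensoring with $\cM^K$ as in the proof of Theorem~\ref{t:X-flat}. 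The middle homology $\Tor_i^Q(\cM,\cN)$ is then constructible by \cite[Proposition~4.30.1]{hom-alg-poset-mods}. For the ``in particular'' statement, specialize $Y = 0$ so that $H = Q$ and $K_+ = G_+$; Lemma~\ref{l:noetherian-or-artinian} supplies a semisimply flat $\kk[Q_+]$-resolution of~$N$, satisfying all hypotheses.

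The hard part is the dim-$1$ identification $\kk\{U\} \otimes_Q \kk[H_+] = \kk\{U + H_+\}$ in paragraph~2: showing that all valid pairs in a given $K$-degree lie in a single $\kk[Q_+]$-tensor equivalence class relies on the intermediate pair construction, which uses in a crucial way that $H_+$ is a monoid containing $\{\0\} \times Q_+$, so adding $(\0, q^-)$ to any $h' \in H_+$ keeps it in~$H_+$. Everything downstream is a bookkeeping application of the flat-complex machinery of~\S\ref{s:flat}.
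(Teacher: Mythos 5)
Your approach is genuinely different from the paper's. The paper uses Proposition~\ref{p:free-reduction} to replace $G_+$ by a free Rees monoid, so that $\kk[K_+]$ becomes free over $\kk[H_+]$; it then extends the flat resolution $C_\spot$ of $\cN$ to a resolution $C_\spot^K$ that remains semisimply flat over~$K$, and invokes Theorem~\ref{t:tor} over the class~$\XX$ group~$K$. You instead try to prove directly that $\cM^K = \cM\otimes_Q\kk[H_+]$ is constructible by pushing a semisimple upset presentation of~$\cM$ through $-\otimes_Q\kk[H_+]$.

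The crucial identification $\kk\{U\}\otimes_{\kk[Q_+]}\kk[H_+]\cong\kk\{U+H_+\}$ is the gap, and it fails under the theorem's hypotheses. First, your ``intermediate pair'' argument contains a sign error: from $u_Q+h_Q=u'_Q+h'_Q$ and $u'_Q-u_Q=q^+-q^-$ one gets $h-(\0,q^+)=h'-(\0,q^-)$, not $h'+(\0,q^-)$, so the intermediate element need not lie in~$H_+$ (subtracting a positive element from a monoid element can leave the monoid). Second, and more seriously, the identity is simply false when $Q_+$ does not generate~$Q$ as a group, which Definition~\ref{d:presburger-group} permits (``full'' means only finite index). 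Take $Q=\ZZ$, $Q_+=2\NN$, $Z$~trivial so $G=Q$, and $H_+\subseteq\ZZ\times\ZZ$ with slices $(H_+)_0=2\NN$, $(H_+)_y=\NN$ for $y\ge1$, and $(H_+)_y=\nothing$ for $y<0$: this is a Rees monoid over~$Q$ with $\kk[H_+]$ free (hence flat) over $\kk[Q_+]=\kk[\xx^2]$. For the upset $U=\NN\subseteq Q$, the slice of $\kk\{U\}\otimes_{\kk[Q_+]}\kk[H_+]$ over $y\ge1$ is $\kk\{\NN\}\otimes_{\kk[\xx^2]}\kk\{\NN\}$, a free $\kk[\xx^2]$-module of rank~$4$ whose $Q$-graded pieces in degrees $q\ge1$ are $2$-dimensional; but $\kk\{U+H_+\}$ has $1$-dimensional pieces everywhere. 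So the tensor product is not an upset module and your presentation of $\cM^K$ is not a presentation by upset modules. (Even in the case $\ZZ Q_+=Q$, where the identity does hold, the proof requires the flatness hypothesis on $\kk[H_+]$---for instance, embedding $\kk\{U\}\otimes\kk[H_+]$ into $\kk\{Q\}\otimes\kk[H_+]$ via $\Tor_1$-vanishing and then counting dimensions; a single chain of elementary moves need not exist, as the analogous computation $\mm\otimes_{\kk[x,y]}\mm$ in bidegree $(1,1)$ already shows.) A repair would have to split into cosets of $\ZZ Q_+$ and then genuinely use flatness, or else follow the paper's route of extending $C_\spot$ to~$K$ after a free reduction.
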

\begin{proof}
By Lemma~\ref{l:tensor-prod}, the functor $\cM \mapsto \cM \otimes_Q \cN$ is a
direct sum over~$\nn$ and~$\mmm$ of functors $M_\nn \mapsto M_\nn \otimes_Q
N_\mmm$.  Each left derived functor of $\cM \mapsto \cM \otimes_Q \cN$ is
therefore a direct sum of the desired $\Tor$ modules.  But
Lemma~\ref{l:tensor-prod} also says that $\cM \mapsto \cM \otimes_Q \cN$ is
isomorphic to the composite
$$
  \cM \mapsto \cM \otimes_Q k[H_+] \mapsto \cM \otimes_Q k[H_+] \otimes_H \cN
$$
which is by definition
$$
  \cM \mapsto \cM^K \mapsto \cM^K \otimes_H \cN.
$$
The flat hypothesis on $H_+$ implies that $\cM \mapsto \cM^K$ is exact, so
the derived functor of the composite can be computed as the homology
of
$$
  \cM^K \otimes_H C_\spot
$$
where $C_\spot$ is the hypothesized $H$-flat resolution of~$\cN$.

The final goal, for the general claim about $\{\Tor_i^Q(M_\nn,
N_\mmm)\}_{\nn\mmm \in Z \times Y}$, is to show that $\cM^K \otimes_H
C_\spot$ is a constructible complex of $K$-modules.  First treat the
case where $G_+$ is a free Rees monoid over~$Q$.  For any
$H$-module~$\cL$, write $\cL^K$ for the extension to~$K$, so $\cL^K =
\cL \otimes_H \kk[K_+]$.  Since $G_+$ is a free Rees mononid,
$\kk[K_+]$ is free as a $\kk[H_+]$-module.  Hence $C_\spot^K$ is flat
and remains semisimply flat in homological degrees $i-1$, $i$,
and~$i+1$ because extension to~$K$ of a localization is a localization
of~$\kk[K_+]$.  The complex $\cM^K \otimes_H C_\spot$ is therefore
constructible thanks to Theorem~\ref{t:tor} with $\XX =
\text{semisimple}$.

If the Rees monoid $G_+$ is not free over~$Q$, then use
Proposition~\ref{p:free-reduction} to replace~$G_+$ with a free
reduction~$G_+'$.  Replacing $G_+$ with its reduction~$G_+'$ has no
effect on $\cM \otimes_Q \cN$ in Lemma~\ref{l:tensor-prod}, but it replaces
the double Rees monoid~$K_+$ with $K_+' = G_+' \times_Q H_+$ throughout
Lemma~\ref{l:tensor-prod}, so $\cM \otimes_Q \cN = \cM^{K'} \otimes_H \cN$.  Thus
the desired result holds by the free case already proven.

For the ``in particular'' claim, note that $\kk[H_+] = \kk[Q_+]$ here, so
Lemma~\ref{l:noetherian-or-artinian} suffices to show that the
hypotheses are satisfied.
\end{proof}

\begin{remark}\label{r:in-practice}
In practice, the flatness hypotheses on~$H_+$ and~$\cN$ are usually
satisfied by taking $H_+ = Q_+$ and assuming that $Q_+$ is noetherian,
so that $\cN$ can be any finitely generated $Q$-module.
\end{remark}

\subsection{Hom and Ext}\label{b:hom-and-ext-families}

\begin{lemma}\label{l:ext}
Let~$G_+$ and~$H_+$ be Rees monoids over~$Q$ with $G = Z \times Q$
and~$H = Y \times Q$.  If $\cM$ and~$\cN$ are the $G$- and $H$-modules
associated to a pair of families $\{M_\nn\}_{\nn \in Z}$ and
$\{N_\mmm\}_{\mmm \in Y}$ of $Q$-modules, and $\cN \cong
\cN^{\vee\vee}\!$ (equivalently, $\dim_\kk \cN_\hhh <\nolinebreak
\infty$ for all $\hhh \in\nolinebreak H\!$)~then
$$
  \bigoplus_{\nn\mmm} \eext^i_Q(M_\nn, N_\mmm)
  =
  \bigoplus_{\nn\mmm} \Tor_i^Q(M_\nn, N_\mmm^\vee)^\vee
$$
\end{lemma}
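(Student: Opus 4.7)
The plan is to deduce this essentially immediately from Corollary~\ref{c:vee} by applying it to each pair $(\nn,\mmm)$ separately and then taking the direct sum. The double-duality hypothesis on~$\cN$ is what makes Corollary~\ref{c:vee} directly applicable with a ``swapped'' second argument.

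First I would observe that the hypothesis $\cN \cong \cN^{\vee\vee}$ translates slice-by-slice. Since $\cN = \bigoplus_{\mmm \in Y} N_\mmm(-\mmm)$ and Matlis duality acts degree-by-degree (Definition~\ref{d:matlis}), the condition $\dim_\kk \cN_\hhh < \infty$ for all $\hhh \in H$ is equivalent to $\dim_\kk (N_\mmm)_\qq < \infty$ for all $\mmm \in Y$ and $\qq \in Q$, which in turn is equivalent to $N_\mmm \cong N_\mmm^{\vee\vee}$ for every $\mmm \in Y$ by the ``moreover'' clause of Lemma~\ref{l:vee-preserves-X}.

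Next I would apply Corollary~\ref{c:vee} with $M = M_\nn$ and $N = N_\mmm^\vee$ to obtain
$$
  \eext^i_Q(M_\nn, N_\mmm^{\vee\vee})
  \cong
  \Tor_i^Q(M_\nn, N_\mmm^\vee)^\vee.
$$
Combining this with $N_\mmm^{\vee\vee} \cong N_\mmm$ from the previous step yields the pointwise isomorphism
$$
  \eext^i_Q(M_\nn, N_\mmm)
  \cong
  \Tor_i^Q(M_\nn, N_\mmm^\vee)^\vee
$$
for each pair $(\nn,\mmm) \in Z \times Y$.

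Finally I would take the direct sum over all $(\nn,\mmm)$ on both sides. There is no obstacle here: the claimed equality in the lemma is literally the direct sum of the pointwise isomorphisms above, with no further analysis required. The only point that needs care is the equivalence of the finite-dimensionality statement with $\cN \cong \cN^{\vee\vee}$, which is standard (and explicitly noted in Lemma~\ref{l:vee-preserves-X}). Thus the proof amounts to little more than citing Corollary~\ref{c:vee} and Lemma~\ref{l:vee-preserves-X} and assembling the result.
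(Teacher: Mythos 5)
Your proof is correct and follows essentially the same route as the paper's, which simply says ``swap the roles of $N$ and~$N^\vee$ in Corollary~\ref{c:vee}, as allowed by Lemma~\ref{l:vee-preserves-X}''; you have just unpacked that instruction slice-by-slice and then assembled the direct sum.
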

\begin{proof}
Swap the roles of~$N$ and~$N^\vee$ in Corollary~\ref{c:vee}, as allowed
by Lemma~\ref{l:vee-preserves-X}.
\end{proof}

\begin{thm}\label{t:ext-family}
Fix a pair of families $\{M_\nn\}_{\nn\in Z}$ and $\{N_\mmm\}_{\mmm \in Y}$ of
$Q$-modules over~$G = Z \times Q$ and~$H = Y \times Q$ indexed by the double
Rees monoid~$K_+$, with associated $G$-mod\-ule~$\cM$ and
$H$-module~$\cN$.  Assume that one of the following two scenarios is
in~effect.
\begin{enumerate}
\item\label{i:inj-family}%
$\{M_\nn\}_{\nn\in Z}$ is constructible, $\kk[H_+]$ is flat
over~$\kk[Q_+]$, and some injective resolution of $\cN =
\cN^{\vee\vee}$ is semisimply injective in cohomological degrees $i -
1$, $i$, and\/~$i + 1$.
\item\label{i:flat-family}%
$\{N_\mmm\}_{\mmm \in Y}$ is a constructible family, $\kk[G_+]$ is
flat over~$\kk[Q_+]$, and some flat resolution of~$\cM$ is semisimply
flat in homological degrees $i - 1$, $i$, and\/~$i + 1$.
\end{enumerate}
Then $\{\eext^i_Q(M_{-\nn}, N_\mmm)\}_{\nn\mmm\in Z\times Y}$ is a constructible family
over~$K_+$.  In particular,
$$
  \{\eext^i_Q(M_\nn, N)\}_{\nn\in Z}
  \quad\text{and}\quad
  \{\eext^i_Q(M, N_\mmm)\}_{\mmm\in Y}
$$
are constructible if the input family is constructible and the fixed
input module is a localization of a noetherian or artinian module over
an affine semigroup ring~$\kk[Q_+]$.
\end{thm}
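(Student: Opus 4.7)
The plan is to combine Lemma~\ref{l:ext}, which identifies $\bigoplus_{\nn\mmm} \eext^i_Q(M_\nn, N_\mmm)$ with $\bigoplus_{\nn\mmm}\Tor_i^Q(M_\nn, N_\mmm^\vee)^\vee$, together with Theorem~\ref{t:tor-family} for the $\Tor$~side and Lemma~\ref{l:vee-preserves-X} for Matlis duality.  First I~would unpack the bigraded Matlis dual: applying Definition~\ref{d:matlis} on the $H$-grading shows that the slice of~$\cN^\vee$ at $\mmm \in Y$ equals $N_{-\mmm}^\vee$.  Hence $\Tor_i^Q(\cM, \cN^\vee)$, viewed as a $K$-graded module, has slice $\Tor_i^Q(M_\nn, N_{-\mmm}^\vee)$ at~$(\nn,\mmm)$, and a second Matlis dualization on the $K$-grading flips both the $Z$- and $Y$-coordinates to produce slices $\Tor_i^Q(M_{-\nn}, N_\mmm^\vee)^\vee \cong \eext^i_Q(M_{-\nn}, N_\mmm)$, matching the theorem's indexing.

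With that identification in hand, each scenario reduces to verifying the hypotheses of Theorem~\ref{t:tor-family} for the pair $(\cM, \cN^\vee)$ and then Matlis dualizing.  In Scenario~\ref{i:inj-family}, I~would dualize the semisimply injective resolution of~$\cN$ to obtain a semisimply flat resolution of~$\cN^\vee$ via Definition~\ref{d:X-injective} and Lemma~\ref{l:vee-preserves-X}; together with constructibility of $\{M_\nn\}$ and flatness of $\kk[H_+]$ over~$\kk[Q_+]$, this yields constructibility of $\{\Tor_i^Q(M_\nn, N_{-\mmm}^\vee)\}_{\nn\mmm}$ over~$K_+$ by Theorem~\ref{t:tor-family}.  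In Scenario~\ref{i:flat-family}, Lemma~\ref{l:vee-preserves-X} shows that $\cN^\vee$ is constructible, and I~would invoke symmetry of $\Tor$ to apply Theorem~\ref{t:tor-family} with the roles swapped---so $\cN^\vee$ serves as the constructible input and~$\cM$ as the input with semisimply flat resolution---using that $K_+ = G_+ \times_Q H_+$ is symmetric in $G_+$ and~$H_+$.

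Taking one more Matlis dual via Lemma~\ref{l:vee-preserves-X} then delivers constructibility of $\{\eext^i_Q(M_{-\nn}, N_\mmm)\}$ over~$K_+$.  The ``in particular'' claims follow by specializing to a singleton family on one side (Remark~\ref{r:fam-singleton}) and invoking Lemma~\ref{l:noetherian-or-artinian} to supply the required semisimply flat or semisimply injective resolution for the fixed module.  The main bookkeeping challenge will be tracking how Matlis duality interacts with the $K$-grading; in particular, the asymmetric $M_{-\nn}$ versus $N_\mmm$ in the conclusion arises because duality is applied twice while only $\cN$ (and not $\cM$) is the argument that gets Matlis dualized at the intermediate step, so only the $Z$-coordinate survives flipped in the final indexing.
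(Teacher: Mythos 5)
Your proposal is correct and uses essentially the same ingredients as the paper's proof: Lemma~\ref{l:ext}, Theorem~\ref{t:tor-family} (invoking symmetry of $\Tor$ in Scenario~\ref{i:flat-family}), Lemma~\ref{l:vee-preserves-X}, and Lemma~\ref{l:noetherian-or-artinian} for the ``in particular'' claim. The only difference is ordering---the paper first forms $\Tor(\cM,\cN)$, Matlis dualizes, and then replaces $\cN$ by~$\cN^\vee$, whereas you form $\Tor(\cM,\cN^\vee)$ directly and dualize once afterward---but both routes pass through the same three-step chain, and your degree-tracking of the double negation explaining why $M_{-\nn}$ appears but $N_\mmm$ does not is accurate.
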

\begin{proof}
The direct sum $\bigoplus_{\nn\mmm} \Tor_i^Q(M_\nn, N_\mmm)(-\nn,-\mmm)$ is a constructible
$K$-module by Theorem~\ref{t:tor-family} combined with symmetry
of~$\Tor$, as long as the itemized hypotheses of
Theorem~\ref{t:tor-family} are sastified verbatim or they are
satisfied with $\cM$ and~$\cN$ as well as~$G$ and~$H$ swapped.  The latter
option is precisely the second scenario here.  The Matlis dual of this
$\Tor$ direct sum is
$$
  \bigoplus_{\nn\mmm} \Tor_i^Q(M_{-\nn}, N_{-\mmm})^\vee(-\nn,-\mmm),
$$
which remains constructible by Lemma~\ref{l:vee-preserves-X} as long
as the hypotheses remain in force unchanged.  Replacing $\cN$ with~$\cN^\vee$
(the Matlis dual here is over~$H$, although it hardly matters, given
the degree-by-degree phrasing of Definition~\ref{d:matlis}), yields
$$
  \bigoplus_{\nn\mmm} \Tor_i^Q(M_{-\nn}, N_\mmm^\vee)^\vee(-\nn,-\mmm),
$$
and this altered version is again constructible by
Theorem~\ref{t:tor-family}, either under the second scenario
unchanged, or under the hypotheses of Theorem~\ref{t:tor-family} as
written but with $\cN^\vee$ in place of~$\cN$.  Given that $\cN = \cN^{\vee\vee}$, the
semisimply flat hypothesis on the resolution of~$\cN^\vee$ from
Theorem~\ref{t:tor-family} is equivalent to the semisimply injective
hypothesis on the resolution of~$\cN$ in the first scenario by
Lemma~\ref{l:vee-preserves-X}.  Applying Lemma~\ref{l:ext} to this
last $\Tor$ direct sum yields the desired result.

The hypotheses for the ``in particular'' claim are set up so that the
fixed input module has a resolution that is semisimply flat or
injective by Lemma~\ref{l:noetherian-or-artinian} or its Matlis dual
(see Lemma~\ref{l:vee-preserves-X} and Remark~\ref{r:matlisexact}).
For $\eext^i_Q(M_\nn, N)$ the injective resolution of~$N$ is used for
scenario~\ref{i:inj-family}; for $\eext^i_Q(M, N_\mmm)$ the flat
resolution of~$M$ is used for scenario~\ref{i:flat-family}.
\end{proof}

\subsection{Functorial quasipolynomiality}\label{b:functorial-quasipol}\mbox{}

\medskip
\noindent
Combining the piecewise quasipolynomial behavior of numerically
constructible families (\S\ref{b:num-const}) with the functorial
preservation of constructibility
(\S\ref{b:local-cohomology-families}--\ref{b:hom-and-ext-families})
yields prototypical piecewise quasipolynomiality results as
corollaries.

Recall the length, max, and min functions $\length_V$, $\max_{\lambda,V}$
and $\min_{\lambda,V}$ defined for a family $\{V_\nn\}_{\nn \in Z}$ of $Q$-graded
vector spaces in Theorem~\ref{t:num-constructible=>quasipol}.  To
simplify the subsequent corollaries, as well as results in
\S\ref{s:applications}, use the following two notational conventions.

\begin{conv}\label{conv:domains}
Domains of functions $\nn \to \ZZ$ shown to be quasipolynomials are not
explicitly indicated.  For example, if the function is $\length(M_\nn)$,
the conclusion implicitly re\-fers to those $\nn$ for which the length
is finite; and if the function is $\max\bigl\{|\aa| \mathbin{\big|}
(M_\nn)_\aa \neq 0\bigr\}$, the conclusion implicitly refers to those $\nn$ for
which the maximum~exists.
\end{conv}

\begin{defn}\label{d:integers-qpoly}
The integers $\{m_\nn\}_{\nn \in T}$ indexed by a subset $T \subseteq \ZZ^k$ are
\emph{piecewise quasipolynomial} or \emph{piecewise quasilinear} if
the function $g: T \to \ZZ$ with $g(\nn) = m_\nn$ is piecewise quasipolynomial
or piecewise quasilinear
(Definition~\ref{d:piecewise-quasipolynomial}).  Recall that if $k =
1$ and $T \subseteq \NN$, this means $g(n)$ coincides with a quasipolynomial for
$n \gg\nolinebreak 0$ (Remark~\ref{r:quasi-pol-k=1}).
\end{defn}

\begin{cor}\label{c:loc-coh-family-quasipol}
Fix a monomial ideal~$I$ in the monoid algebra~$R = \kk[Q_+]$ and a
constructible family $\{M_\nn\}_{\nn \in Z}$ of~$Q$-modules over a Rees
monoid~$G_+$ with $G = Z \times Q$.  If $\{V_\nn\}_{\nn \in Z}$ is the family of
local cohomology modules $V_\nn = H^i_I(M_\nn)$, then the length
$\length_V: Z \to \NN$ is piecewise quasipolynomial of degree at most
$\rank Q$.  Moreover, the functions $\max_{\lambda,V}$ and $\min_{\lambda,V}$ are
piecewise quasilinear for any linear function $\lambda: Q \to \ZZ$.
\end{cor}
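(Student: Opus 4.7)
The plan is to chain together three of the results already established in the paper, with essentially no independent work required beyond verifying that each hypothesis is met. First, I would invoke Theorem~\ref{t:loc-coh-family} applied to the given constructible family $\{M_\nn\}_{\nn \in Z}$ and the monomial ideal~$I$, whose conclusion is precisely that $\{V_\nn\}_{\nn \in Z} = \{H^i_I(M_\nn)\}_{\nn \in Z}$ is a constructible family of $Q$-modules over the same Rees monoid~$G_+$.

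Next, I would apply Lemma~\ref{l:unassuming} to deduce that $\{V_\nn\}_{\nn \in Z}$, being constructible, is also numerically constructible in the sense of Definition~\ref{d:numerical-family}. This is the step that converts module-theoretic constructibility into the combinatorial input that the quasipolynomiality machinery requires; it is immediate from the fact that a semisimple constant subdivision of~$G$ subordinate to the associated $G$-module $\bigoplus_{\nn} V_\nn(-\nn)$ has $\dim_\kk V_{\nn\qq}$ constant on each region.

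Finally, I would feed this numerically constructible family into Theorem~\ref{t:num-constructible=>quasipol}. Part~\ref{i:length-quasipol} of that theorem delivers piecewise quasipolynomial behavior of $\length_V$, together with the degree bound by $\rank Q$: the proof of that theorem reduces $\length_V(\nn)$ to a weighted sum of Presburger counting functions in the coordinates of~$\qq \in Q$, so the bound is exactly the number of counted variables, namely $d = \rank Q$. Part~\ref{i:maxmin-quasipol} of the same theorem supplies the piecewise quasilinearity of $\max_{\lambda,V}$ and $\min_{\lambda,V}$ for any linear $\lambda: Q \to \ZZ$.

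There is no substantive obstacle to overcome: the corollary is a packaging result, and all technical work has been absorbed into Theorem~\ref{t:loc-coh-family} (which in turn rested on Theorem~\ref{t:cech} and the constructibility of the \v Cech complex) and Theorem~\ref{t:num-constructible=>quasipol} (which rested on Woods's theorem \cite[Theorem~1.10]{woods2015} applied to Presburger definable counting formulas). The only point worth double-checking is that $G = Z \times Q$ has $Q$ as its ``counted'' slice, so that the degree bound $\rank Q$ rather than $\rank G$ applies; this is built into the statement of Theorem~\ref{t:num-constructible=>quasipol}, where the length along each fixed~$\nn$ is a count over~$Q$.
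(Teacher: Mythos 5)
Your proposal matches the paper's proof exactly: chain Theorem~\ref{t:loc-coh-family} (constructibility of the local cohomology family), Lemma~\ref{l:unassuming} (constructible $\Rightarrow$ numerically constructible), and Theorem~\ref{t:num-constructible=>quasipol} (numerical constructibility $\Rightarrow$ piecewise quasipolynomial/quasilinear behavior). The remarks about the degree bound coming from the $d$ counted variables in~$Q$ and about the corollary being purely a packaging result are accurate and consistent with the paper's one-line proof.
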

\begin{proof}
Apply
Theorem~\ref{t:num-constructible=>quasipol} to the
conclusion of Theorem~\ref{t:loc-coh-family} via
Lemma~\ref{l:unassuming}.
\end{proof}

\begin{cor}\label{c:tor-family->quasipol}
Fix Rees monoids $G_+$ and~$H_+$ over~$Q$ with $G = Z \times Q$ and~$H = Y
\!\times Q$.  Let $\{M_\nn\}_{\nn \in Z}$ and $\{N_\mmm\}_{\mmm \in Y}$ be a $G$-family
and an $H$-family of $Q$-modules.  Assume $\{M_\nn\}$ is constructible,
$H_+$ is flat over~$Q$, and $\cN = \bigoplus_{\mmm \in Y} N_\mmm(-\mmm)$ admits an $H$-flat
resolution that is semisimply flat in
degrees $i-1$, $i$, and~$i+1$.  Let $V_{\nn\mmm} = \Tor_i^Q(M_\nn, N_\mmm)$.
\begin{enumerate}
\item%
The length $\length_V: Z \times Y \to \NN$ is piecewise quasipolynomial of
degree at most~$\rank Q$.
\item%
If $\lambda \hspace{-1pt}: Q \to \ZZ$ is linear, then $\max_{\lambda,V}$ and
$\min_{\lambda,V}$ are piecewise quasilinear~on~$Z \times\nolinebreak Y$.
\end{enumerate}
In particular, if $N$ is a localization of a noetherian or artinian
module over an affine semigroup ring~$\kk[Q_+]$ and $\{M_\nn\}_{\nn \in \ZZ^k}$
is a constructible family of~$Q$-modules, then
$$
  \length\bigl(\Tor_i^Q(M_\nn, N)\bigr)
  \quad\text{and}\quad
  {\textstyle\max}_\lambda\bigl(\Tor_i^Q(M_\nn, N)\bigr)
  \quad\text{and}\quad
  {\textstyle\min}_\lambda\bigl(\Tor_i^Q(M_\nn, N)\bigr)
$$
are piecewise quasipolynomial on~$\ZZ^k$ of degrees at most $\rank Q$,
$1$, and~$1$, respectively.
\end{cor}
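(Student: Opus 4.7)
The plan is to assemble three ingredients already in place: the functorial constructibility theorem for $\Tor$ of families (Theorem~\ref{t:tor-family}), the passage from module-theoretic constructibility to numerical constructibility (Lemma~\ref{l:unassuming}), and the quasipolynomiality conclusion for numerically constructible families (Theorem~\ref{t:num-constructible=>quasipol}). This parallels the one-line proof of Corollary~\ref{c:loc-coh-family-quasipol}, which combined Theorems~\ref{t:loc-coh-family} and~\ref{t:num-constructible=>quasipol} via Lemma~\ref{l:unassuming}; here the Čech-complex based result is simply replaced by its $\Tor$ analogue.

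First I would observe that the hypotheses in the statement are precisely those of Theorem~\ref{t:tor-family}: the family $\{M_\nn\}_{\nn \in Z}$ is constructible, $\kk[H_+]$ is flat over $\kk[Q_+]$, and $\cN = \bigoplus_{\mmm \in Y} N_\mmm(-\mmm)$ admits an $H$-flat resolution that is semisimply flat in homological degrees $i-1, i, i+1$. Theorem~\ref{t:tor-family} therefore yields that $\{V_{\nn\mmm}\}_{\nn\mmm \in Z \times Y}$ is a constructible family over the double Rees monoid $K_+ = G_+ \times_Q H_+$, with $K = Z \times Y \times Q$. Lemma~\ref{l:unassuming} then promotes this to numerical constructibility of the family with parameter group $Z \times Y$ and grading group $Q$.

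Next I would invoke Theorem~\ref{t:num-constructible=>quasipol}. Part~\ref{i:length-quasipol} applied to this numerically constructible family yields the piecewise quasipolynomial behavior of $\length_V: Z \times Y \to \NN$, with the degree bound equal to the rank of the grading group, i.e.\ $\rank Q$; this matches the ``counted variables'' argument in the proof of that theorem, since it is the $Q$-coordinates of each fiber $V_{\nn\mmm}$ that are being enumerated. Part~\ref{i:maxmin-quasipol} applied to any linear $\lambda: Q \to \ZZ$ gives piecewise quasilinearity of $\max_{\lambda,V}$ and $\min_{\lambda,V}$ on $Z \times Y$. This settles items (1) and (2).

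Finally, for the ``in particular'' claim I would specialize by taking $Y = 0$, so $H = Q$ and $H_+ = Q_+$, making the $H$-family consist of the single $Q$-module $N$. The flatness of $\kk[H_+] = \kk[Q_+]$ over itself is trivial, and Lemma~\ref{l:noetherian-or-artinian} supplies a semisimply flat resolution of $N$ (in all degrees, hence in particular in degrees $i-1, i, i+1$) whenever $N$ is a localization of a noetherian or artinian module over an affine semigroup ring. With these choices, $Z \times Y = \ZZ^k$ and the main claim delivers the three piecewise quasipolynomial functions with the stated degree bounds. There is no real obstacle in this argument; the work is purely in verifying that the hypotheses of Theorem~\ref{t:tor-family} and Theorem~\ref{t:num-constructible=>quasipol} are matched against those of the corollary, and in tracking which rank governs which degree bound.
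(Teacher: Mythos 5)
Your proposal matches the paper's proof exactly: Theorem~\ref{t:tor-family} gives constructibility of the $\Tor$ family over the double Rees monoid, Lemma~\ref{l:unassuming} downgrades this to numerical constructibility, and Theorem~\ref{t:num-constructible=>quasipol} then yields the piecewise quasipolynomial and quasilinear conclusions, with the ``in particular'' claim handled by specializing to $Y = 0$ and invoking Lemma~\ref{l:noetherian-or-artinian}. The paper states this as a one-line proof, and your write-up is the same argument spelled out.
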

\begin{proof}
Apply
Theorem~\ref{t:num-constructible=>quasipol} to the
conclusion of Theorem~\ref{t:tor-family} via Lemma~\ref{l:unassuming}.
\end{proof}

\pagebreak[2]

\begin{cor}\label{c:ext-family->quasipol}
Fix Rees monoids~$G_+\!$ and~$H_+\!$ over~$Q$ with $G = Z \times Q$
and~$H = Y\! \times Q$.  Let $\{M_\nn\}_{\nn\in\ZZ^k}$ be a $G$-family
and $\{N_\mmm\}_{\mmm\in\ZZ^\ell}$ an $H$-family of $Q$-modules.
Assume that one of the following two scenarios is in effect.
\begin{enumerate}
\item\label{i:inj-family'}%
$\{M_\nn\}_{\nn \in Z}$ is constructible, $H_+$ is flat over~$Q$, and
$\cN = \bigoplus_{\mmm\in Y} N_\mmm(-\mmm)$ has a $\kk[H_+]$-injective
resolution that is semisimply injective in
degrees $i - 1$, $i$, and\/~$i + 1$.
\item\label{i:flat-family'}%
$\{N_\mmm\}_{\mmm\in Y}$ is constructible, $G_+$ is flat over~$Q$, and
$\cM = \bigoplus_{\nn\in\ZZ^k} M_\nn(-\nn)$ has a $\kk[G_+]$-flat
resolution that is semisimply flat in
degrees $i - 1$, $i$, and\/ $i + 1$.
\end{enumerate}
Let $V_{\nn\mmm} = \eext^i_Q(M_\nn,N_\mmm)$ for $\nn\mmm \in Z \times Y$.
\begin{enumerate}
\item%
The length $\length_V: Z \times Y \to \NN$ is piecewise quasipolynomial of
degree at most~$\rank Q$.
\item%
If $\lambda \hspace{-1pt}: Q \to \ZZ$ is linear, then
$\max_{\lambda,V}$ and $\min_{\lambda,V}$ are piecewise
quasilinear~on~$Z \times\nolinebreak Y$.
\end{enumerate}
In particular, if $N$ is a localization of a noetherian or artinian
module over an affine semigroup ring~$\kk[Q_+]$ and $\{M_\nn\}_{\nn
\in \ZZ^k}$ is a constructible family of~$Q$-modules, then
$$
  \length\bigl(\eext^i_Q(M_\nn, N)\bigr)
  \quad\text{and}\quad
  {\textstyle\max}_\lambda\bigl(\eext^i_Q(M_\nn, N)\bigr)
  \quad\text{and}\quad
  {\textstyle\min}_\lambda\bigl(\eext^i_Q(M_\nn, N)\bigr)\phantom{.}
$$
are piecewise quasipolynomial on~$\ZZ^k$ of degrees $\ls \rank Q$, $1$,
and~$1$, respectively,~as~are
$$
  \length\bigl(\eext^i_Q(N, M_\nn)\bigr)
  \quad\text{and}\quad
  {\textstyle\max}_\lambda\bigl(\eext^i_Q(N, M_\nn)\bigr)
  \quad\text{and}\quad
  {\textstyle\min}_\lambda\bigl(\eext^i_Q(N, M_\nn)\bigr).
$$
\end{cor}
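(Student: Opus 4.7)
The plan is to reduce the corollary to Theorem~\ref{t:ext-family} together with the quasipolynomial machinery of Theorem~\ref{t:num-constructible=>quasipol}, following exactly the blueprint of Corollary~\ref{c:tor-family->quasipol}. Under either Scenario~\ref{i:inj-family'} or~\ref{i:flat-family'}, Theorem~\ref{t:ext-family} delivers that $\{\eext^i_Q(M_{-\nn}, N_\mmm)\}_{\nn\mmm \in Z \times Y}$ is a constructible family over the double Rees monoid~$K_+$. Reindexing by $\nn \mapsto -\nn$ preserves constructibility because negation is one of the operations available in any class~$\XX$ (Definition~\ref{d:class-X}), so the family $\{V_{\nn\mmm}\} = \{\eext^i_Q(M_\nn, N_\mmm)\}_{\nn\mmm \in Z \times Y}$ is also constructible over the suitably relabeled double Rees monoid.

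Once constructibility is in hand, Lemma~\ref{l:unassuming} promotes the family to a numerically constructible one indexed by $Z \times Y$, at which point Theorem~\ref{t:num-constructible=>quasipol} finishes the job: part~\ref{i:length-quasipol} yields the piecewise quasipolynomial length function $\length_V$ of degree at most $\rank Q$, and part~\ref{i:maxmin-quasipol} yields the piecewise quasilinear functions $\max_{\lambda,V}$ and $\min_{\lambda,V}$ for each linear $\lambda : Q \to \ZZ$.

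For the ``in particular'' clauses, the module~$N$ is viewed as a $G$-family (respectively, $H$-family) indexed by the trivial group, with $G_+ = Q_+$ (respectively, $H_+ = Q_+$); the required flatness of $\kk[G_+]$ or $\kk[H_+]$ over $\kk[Q_+]$ is then automatic. For $\{\eext^i_Q(M_\nn, N)\}_{\nn\in Z}$ apply Scenario~\ref{i:inj-family'}, drawing the semisimply injective resolution of~$N$ from Lemma~\ref{l:noetherian-or-artinian}; for $\{\eext^i_Q(N, M_\nn)\}_{\nn\in Z}$ apply Scenario~\ref{i:flat-family'}, drawing the semisimply flat resolution of~$N$ from the same lemma. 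The degree bounds ($\ls \rank Q$ for length, $1$ for $\max_\lambda$ and $\min_\lambda$) and the domain conventions are inherited verbatim from Theorem~\ref{t:num-constructible=>quasipol} via Convention~\ref{conv:domains}.

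The work in this corollary is almost entirely bookkeeping: all substantive content lives upstream, in Theorem~\ref{t:ext-family} (Matlis duality plus the $\Tor$-flatness argument that preserves constructibility of an $\Ext$ family) and Theorem~\ref{t:num-constructible=>quasipol} (the Presburger-arithmetic counting argument that converts numerical constructibility into piecewise quasipolynomiality). The only point that deserves a sentence of attention is the sign flip between Theorem~\ref{t:ext-family}'s index $M_{-\nn}$ and the corollary's index $M_\nn$: pulling back a piecewise quasipolynomial (respectively, piecewise quasilinear) function along $\nn \mapsto -\nn$ produces another such function of the same degree, so the reindexing is harmless.
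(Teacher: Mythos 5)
Your proof is correct and follows the same route as the paper, which simply applies Theorem~\ref{t:num-constructible=>quasipol} to the conclusion of Theorem~\ref{t:ext-family} via Lemma~\ref{l:unassuming}, with Lemma~\ref{l:noetherian-or-artinian} handling the ``in particular'' clauses. Your explicit attention to the reindexing $\nn \mapsto -\nn$ needed to pass from the conclusion of Theorem~\ref{t:ext-family} (which features $M_{-\nn}$) to the corollary's statement (which features $M_\nn$) is a genuine clarification of a point the paper's terse proof leaves implicit.
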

\begin{proof}
Apply
Theorem~\ref{t:num-constructible=>quasipol} to the
conclusion of Theorem~\ref{t:ext-family} via Lemma~\ref{l:unassuming}.
\end{proof}

\begin{remark}\label{r:truncate-functors}
The piecewise polynomial conclusions of all three corollaries in this
section remain valid for any linear truncation
(Remark~\ref{r:truncate}) of the families of local cohomology, $\Tor$,
or $\eext$ modules.  This can be seen by inserting
Remark~\ref{r:truncate} between the applications of
Lemma~\ref{l:unassuming} and
Theorem~\ref{t:num-constructible=>quasipol} in the proofs of the
corollaries.
\end{remark}


\section{Ubiquity of quasipolynomials}\label{s:applications}

This section covers applications of our theory, showcasing how it can
be used to prove that fundamental homological invariants behave
quasipolynomially.  Some of the conclusions here greatly extend, in
the context of families of multigraded modules over affine semigroup
rings, several results in the literature.

\begin{conv}\label{conv:applications}
Throughout this section, in addition to Convention~\ref{conv:domains}
and Definition~\ref{d:integers-qpoly}, assume the following.
\begin{enumerate}
\item\label{i:affine}%
$Q_+$ is an affine semigroup (Definition~\ref{d:affine-semigroup})
with $Q=\ZZ Q_+\cong \ZZ^d$.  Set $R = \kk[Q_+]$ and $\mm = \kk\{Q_+
\minus \{\0\}\}$ its homogeneous maximal ideal.
\item%
A family $\{M_\nn\}_{\nn\in\ZZ^k}$ of $Q$-modules is
\emph{constructible} if it is so over some Rees monoid~$G_+$ over~$Q$
with $G = \ZZ^k \times Q$ as in Definition~\ref{d:family}.
\end{enumerate}
\end{conv}

\subsection{Local cohomology, \texorpdfstring{$\aa$}{a}-invariants, and regularity}
\label{b:qpoly-local-coh}

\begin{lemma}\label{l:maxmin-nonzero}
Let $\{M_\nn^1\}_{\nn\in\ZZ^k}, \ldots, \{M_\nn^r\}_{\nn\in\ZZ^k}$ be
constructible families of $Q$-modules.  The functions $\nn \mapsto
\max\{i \mid M_\nn^i \neq 0\}$ and $ \nn \mapsto \min\{i \mid M_\nn^i
\neq 0\}$ are piecewise quasiconstant (piecewise quasipolynomial of
degree~$0$).
\end{lemma}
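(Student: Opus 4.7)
The plan is to show that the level sets of both functions are semisimple subsets of $\ZZ^k$, and then to conclude piecewise quasiconstancy by exhibiting each semisimple set as a finite disjoint union of lattice-coset slices of rational polyhedra.

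First I would verify that for each $i$ the set $T_i := \{\nn \in \ZZ^k \mid M_\nn^i \neq 0\}$ is semisimple. Constructibility of the family $\{M_\nn^i\}$ means its associated $G$-module $\cM^i = \bigoplus_{\nn} M_\nn^i(-\nn)$ dominates a finite semisimple constant subdivision of $G = \ZZ^k \times Q$ by Definition~\ref{d:tame}, so the union of those regions on which the constant vector-space dimension is nonzero is semisimple in~$G$. I would then argue that its image under the projection $\pi: G \to \ZZ^k$ is again semisimple, since projections (given by existential quantification) preserve Presburger definability, and Theorem~\ref{t:presburger} converts back to semisimple.

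Next I would observe that the level sets of $\max$ and $\min$ are Boolean combinations of the $T_i$ and therefore themselves semisimple by Theorem~\ref{t:presburger}. For instance, $\{\nn : \max\{i \mid M_\nn^i \neq 0\} = j\} = T_j \minus (T_{j+1} \cup \cdots \cup T_r)$, with an analogous expression for the min. Both functions take values in the finite set $\{1, \ldots, r\}$, so their common domain $T_1 \cup \cdots \cup T_r$ partitions into finitely many semisimple pieces on each of which the respective function is constant.

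Finally, to conclude piecewise quasiconstancy in the sense of Definition~\ref{d:piecewise-quasipolynomial}, I would convert the semisimple partition into a polyhedral one. Each semisimple level set decomposes as a finite disjoint union of simple sets $\qq + N$, and $\qq + N = (\qq + \RR_{\gs 0}N) \cap (\qq + \ZZ N)$ is the intersection of a rational polyhedron with a lattice coset. Taking the common polyhedral refinement of all the polyhedra arising from every simple piece of every level set yields a partition of $\ZZ^k$ by rational polyhedra, on each of which the function is constant on every lattice coset that meets it and is therefore quasiconstant. I expect the only real obstacle is this last packaging step to convert a semisimple partition into a polyhedral one, which amounts to routine polyhedral combinatorics.
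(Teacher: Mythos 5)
Your proof is correct and takes essentially the same route as the paper: express the level sets of $\max$ and $\min$ as Boolean combinations of the sets $T_i = \{\nn \mid M_\nn^i \neq 0\}$, show these are semisimple, and convert via Theorem~\ref{t:presburger}. You spell out two steps the paper leaves implicit---the projection argument justifying semisimplicity of each $T_i$, and the packaging of semisimple level sets into the polyhedral-plus-lattice-coset form demanded by Definition~\ref{d:piecewise-quasipolynomial}---and both are valid.
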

\begin{proof}
By Theorem~\ref{t:presburger} it suffices to show that for each $j$
the sets
\begin{align*}
  A_j &= \bigl\{\nn \in \ZZ^k \mid j = \max\{i \mid M_\nn^i \neq 0\}\bigr\}
\\\text{and}\quad
  B_j &= \bigl\{\nn \in \ZZ^k \mid j = \min\{i \mid M_\nn^i \neq 0\}\bigr\}
\end{align*}
are semisimple.  For each $i$ some Presburger formula $F_i(\nn)$
defines the set $\{\nn \in \ZZ^k \mid M_\nn^i \neq 0\}$.  Thus $A_j$
is defined by $F_j(\nn) \wedge \bigwedge_{i > j} \neg F_i(\nn)$ and
$B_j$ by $F_j(\nn) \wedge \bigwedge_{i < j} \neg F_i(\nn)$.
\end{proof}

\begin{remark}\label{r:assymp-periodic}
If $k = 1$, piecewise quasiconstant is equivalent to eventually
periodic.
\end{remark}

\begin{thm}\label{t:loc-coh-final}
Let $\{M_\nn\}_{\nn\in\ZZ^k}$ be a constructible family of $Q$-modules.
\begin{enumerate}
\item\label{i:lengths-local-coh}%
For any monomial ideal $I$ and  $a,b,c,d\in \ZZ$ the lengths 
$$
  \length \biggl(\dis\bigoplus_{\ a|\nn|+b \,\ls\, |\aa| \,\ls\, c|\nn|+d} \HH iI{M_\nn}_\aa\biggr)
  \quad\text{and}\quad
  \length\bigl(\HH i\mm{M_\nn}_{\gs a|\nn|+b}\bigr)
$$
are piecewise quasipolynomial of degree at most $d$.
\item\label{i:a-inv}%
The $a$-invariants $a_i\left(M_\nn\right)=\max\{|\aa|\mid \HH
i\mm{M_\nn}_\aa \neq 0\}$ are piecewise quasilinear.

\item\label{i:regs}%
The Castelnuovo--Mumford regularities $\reg(M_\nn) = \max\{a_i(M_\nn)
+ i\}$ are piecewise quasilinear.

\item\label{i:depths}%
The depths $\depth(M_\nn) = \min\{i \in \NN \mid \HH i\mm{M_\nn} \neq 0\}$ and
dimensions $\dim(M_\nn) = \max\{i \in \NN \mid \HH i\mm{M_\nn} \neq 0\}$
are piecewise quasiconstant.
\end{enumerate}
\end{thm}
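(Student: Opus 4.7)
The plan in every case is to view the relevant homological invariants as numerical features of the constructible families $\{H^i_I(M_\nn)\}_{\nn\in\ZZ^k}$ and $\{H^i_\mm(M_\nn)\}_{\nn\in\ZZ^k}$ produced by Theorem~\ref{t:loc-coh-family}, and then extract quasipolynomial behavior via Lemma~\ref{l:unassuming} and Theorem~\ref{t:num-constructible=>quasipol}. For item~\ref{i:lengths-local-coh}, the truncation condition $a|\nn|+b\ls|\aa|\ls c|\nn|+d$ (respectively $|\aa|\gs a|\nn|+b$) cuts out a semisimple subset of $G=\ZZ^k\times Q$ because it is defined by integer affine-linear inequalities and hence is Presburger. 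Remark~\ref{r:truncate} then says that the corresponding linear truncation of $\{H^i_I(M_\nn)\}$ (respectively $\{H^i_\mm(M_\nn)\}$) remains numerically constructible, and Theorem~\ref{t:num-constructible=>quasipol}\ref{i:length-quasipol} yields piecewise quasipolynomial length of degree at most~$\rank Q$. For item~\ref{i:depths}, Lemma~\ref{l:maxmin-nonzero} applies directly to the finite collection of constructible families $\{H^i_\mm(M_\nn)\}_{\nn\in\ZZ^k}$ for $i=0,\ldots,\rank Q$, since $\depth(M_\nn)$ and $\dim(M_\nn)$ are by definition the min and max indices~$i$ at which the local cohomology is nonzero.

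For item~\ref{i:a-inv}, I would apply Theorem~\ref{t:num-constructible=>quasipol}\ref{i:maxmin-quasipol} to the constructible family $V_\nn=H^i_\mm(M_\nn)$ with the linear functional $\lambda\colon Q\to\ZZ$ given by $\lambda(\aa)=|\aa|$. Each $V_\nn$ is annihilated by a power of~$\mm$, so its support is bounded in~$|\aa|$ whenever it is nonzero; thus every such~$\nn$ lies in~$\Top_\lambda V$, and $a_i(M_\nn)=\max_{\lambda,V}(\nn)$ is piecewise quasilinear. This is the content of Corollary~\ref{c:loc-coh-family-quasipol} specialized to the functional~$\lambda$.

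Item~\ref{i:regs} reduces to item~\ref{i:a-inv} via $\reg(M_\nn)=\max_{0\ls i\ls\rank Q}\{a_i(M_\nn)+i\}$, using that $H^i_\mm(M_\nn)$ vanishes outside $0\ls i\ls\rank Q$. The remaining subsidiary fact needed is that the pointwise maximum of finitely many piecewise quasilinear functions is again piecewise quasilinear, where each $a_i$ is defined only on the semisimple support $\{\nn\mid H^i_\mm(M_\nn)\neq 0\}$ supplied by Lemma~\ref{l:maxmin-nonzero} and is set to $-\infty$ elsewhere. I~would prove this by first taking the common polyhedral refinement of all the individual pieces together with a common coset sublattice $\Lambda$, reducing on each cell of each coset to finitely many honest affine-linear expressions, and then subdividing each cell by the rational hyperplanes across which pairwise comparisons switch sign; the resulting refinement is still polyhedral and a single affine function attains the maximum on each new piece. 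The main anticipated obstacle is precisely this polyhedral bookkeeping for item~\ref{i:regs}: it is elementary but worth stating cleanly, and no new module-theoretic input is required beyond items~\ref{i:a-inv} and~\ref{i:depths}.
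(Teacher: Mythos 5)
Your proposal matches the paper's route exactly: items~\ref{i:lengths-local-coh} and~\ref{i:a-inv} go through Corollary~\ref{c:loc-coh-family-quasipol} (with Remark~\ref{r:truncate} for the truncated lengths), item~\ref{i:regs} reduces to item~\ref{i:a-inv}, and item~\ref{i:depths} is Lemma~\ref{l:maxmin-nonzero} applied to $\{\HH i\mm{M_\nn}\}_{\nn\in\ZZ^k}$. The paper states the argument more tersely than you do but relies on the same ingredients, including the elementary fact you spell out that a pointwise maximum of finitely many piecewise quasilinear functions on a common semisimple domain is again piecewise quasilinear.

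One small correction to a side remark in your treatment of item~\ref{i:a-inv}: it is not true that $H^i_\mm(M_\nn)$ is annihilated by a power of~$\mm$ --- local cohomology is $\mm$-torsion in the sense that each element is killed by some power, but the whole module need not be killed by a single power (already the injective hull $\kk\{-Q_+\}$ of $\kk$ shows this), and for a merely constructible, non-noetherian $M_\nn$ the support of $H^i_\mm(M_\nn)$ need not be bounded above in~$|\aa|$ at all. You do not need this claim: by Convention~\ref{conv:domains} the statement implicitly restricts to those $\nn$ for which the maximum exists, i.e.\ to $\nn\in\Top_\lambda V$, which Theorem~\ref{t:num-constructible=>quasipol}.\ref{i:maxmin-quasipol} shows is itself a semisimple set; so the quasilinearity on that set is exactly what is claimed.
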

\begin{proof}
Part~\ref{i:lengths-local-coh} follows from
Corollary~\ref{c:loc-coh-family-quasipol} and Remark~\ref{r:truncate};
Part~\ref{i:a-inv} from Corollary~\ref{c:loc-coh-family-quasipol}, and
Part~\ref{i:regs} from Part~\ref{i:a-inv}.  Part~\ref{i:depths}
follows from Lemma~\ref{l:maxmin-nonzero} applied to the families
$\{\HH i\mm{M_\nn}\}_{\nn\in\ZZ^k}$.
\end{proof}
   
\begin{remark}\label{r:dao-montaño}
Theorem~\ref{t:loc-coh-final}.\ref{i:lengths-local-coh} generalizes a
result by Dao and Monta\~no \cite[Theorem 3.8]{dao-montano-2019},
which states that for a monomial ideal $I \subseteq R = \kk[\NN^d]$, the lengths
of $H^i_\mm(R/I^n)$ are eventually quasipolynomial if they are finite
for $n \gg 0$.  The proof there use Takayama's generalizaton
\cite{takayama2005} of Hochster's formula \cite{hochster1977} to
describe the constant regions in the local cohomology of monomial
ideals explicitly by indexing them with labels that are subcomplexes
of a fixed simplicial complex on~$d$ vertices.  That reduces the
characterization of constant regions in local cohomology to ideal
membership in powers of monomial ideals, which they directly prove to
be semisimple.  Constructible module theory here allows conclusions on
the semisimplicity of constant regions without identifying the regions
explicitly, and with arbitrary \mbox{constructible}~\mbox{input}.
\end{remark}

\begin{remark}\label{r:das-lengths}
Using the ideas in \cite[Theorem 3.8]{dao-montano-2019}, Das showed
that for monomial ideals $I_1, \ldots, I_k$ in $R = \kk[\NN^d]$ the lengths
$\length\bigl(\HH 0\mm{R/(I_1^{n_1} \cdots I_k^{n_k})}\bigr)$ are
quasipolynomial for $n_1,\ldots, n_k \gg 0$.
Theorem~\ref{t:loc-coh-final}.\ref{i:lengths-local-coh} generalizes
this result to constructible families.  In particular, by
Proposition~\ref{p:ideals-families}.\ref{i:quotients}
and~\ref{p:ideals-families}.\ref{i:products}, our theorem applies to
$R/I_{1,n_1} \cdots I_{k,n_k}$, where each $\{I_{i,n}\}_{n\in\ZZ}$ is a
constructible family of ideals.
\end{remark}

\begin{remark}\label{r:reg-quasilinear}
Regularities of powers of homogeneous ideals in arbitrary standard
graded noetherian rings are known to eventually agree with a linear
function \cite{cutkosky-herzog-trung99,kodiyalam2000,trung-wang2005}.
Theorem~\ref{t:loc-coh-final}.\ref{i:regs} shows that, over affine
semigroup rings, this linear phenomenon occurs in the much larger
class of constructible families of modules.
\end{remark}

\begin{remark}\label{r:depth-eventually-periodic}
When $R=\kk[\NN^d]$ and $\cM = \bigoplus_{n\in\ZZ} M_n(-n)$ is
noetherian over an underlying Rees monoid, classical methods show that
$\depth(M_n)$ is periodic for $n \gg 0$ \cite{herzog-hibi05}: it holds
for finitely generated graded modules over a noetherian positively
graded algebra whose zeroth component is either a local ring or a
standard graded ring over a field.
In the multigraded setting,
Theorem~\ref{t:loc-coh-final}.\ref{i:depths} applies more generally
when $\cM$ only needs to be constructible (see
Remark~\ref{r:syzygy-thm}).  In \cite{nguyen-trung19} the authors show
that any eventually periodic function can be realized as the depths of
symbolic powers of a homogeneous (not necessarily monomial) ideal in a
polynomial ring.
\end{remark}

\begin{example}\label{e:weird-quasilinear}
To indicate an arbitrary instance of the kinds of composite operations
that still yield controlled numerical growth, let $I, J, K$ be
monomial ideals in the polynomial ring $R = \kk[\NN^d]$.  The regularities
and depths of integral closures of saturations of multiplier ideals of
colons of powers exhibit controlled numerical growth:
\begin{enumerate}
\item\label{i:regs_compos}%
$\reg\left(\ol{\mathcal{J}(I^n :_R J^{3n-2}) :_R K^\infty}\right)$ is quasilinear, and
\item\label{i:depths_compos}%
$\depth\left(R/\ol{\mathcal{J}(I^n :_R J^{3n-2}) :_R K^\infty}\right)$ is periodic
for $n \gg 0$,
\end{enumerate}\setcounter{separated}{\value{enumi}}
as do lengths of local cohomology of quotients mod integral closures
of colons of~powers:
\begin{enumerate}\setcounter{enumi}{\value{separated}}
\item%
$\length\bigl(\HH i{\<x,y\>}{R/\ol{I^n :_R \mm^n}}_{-n \ls |\aa| \ls n}\bigr)$
is quasipolynomial of degree at most~$d$ for $n \gg 0$.
\end{enumerate}
Note that the ideals in items~\ref{i:regs_compos}
and~\ref{i:depths_compos} do not necessarily form graded families.
\end{example}	 

\subsection{Tor and Betti numbers}\label{b:tor-betti}\mbox{}

\medskip
\noindent
Specializing the tensor products, lengths, $\max$, and~$\min$
in Corollary~\ref{c:tor-family->quasipol} yields conclusions about
minimal generators and higher Betti numbers.  Recall
Convention~\ref{conv:applications}.

\pagebreak[2]

\begin{thm}\label{t:tor-final}
Let $\{M_\nn\}_{\nn\in\ZZ^k}$ be a constructible family of $Q$-modules.
\begin{enumerate}

\item\label{i:gens-tor}%
If $N$ is a localization of a noetherian or artinian $R$-module, then
as functions of $\nn \in \ZZ^k$, the length and number of generators
$$
  \length\bigl(\Tor^Q_i(M_\nn,N)\bigr)
  \quad\text{and}\quad
  \mu\bigl(\Tor^Q_i(M_\nn,N)\bigr)
$$
are piecewise quasipolynomials of degree at most~$d$.

\item\label{i:bettis}%
The Betti numbers $\beta_i(M_\nn) = \dim_\kk\Tor_i^Q(M_\nn,\kk)$ are piecewise
quasipolynomial of degree at most~$d$.  This includes the number of
generators $\mu(M_\nn) =\nolinebreak \beta_0(M_\nn)$.
%
\end{enumerate}
\end{thm}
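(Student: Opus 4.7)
The plan is to reduce every statement to the ``in particular'' clause of Corollary~\ref{c:tor-family->quasipol}, applied once for the lengths and twice for the generator counts.

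First I would dispose of the two length claims. The length $\length(\Tor^Q_i(M_\nn,N))$ in part~\ref{i:gens-tor} is exactly the ``in particular'' clause of Corollary~\ref{c:tor-family->quasipol}: the hypothesis that $N$ is a localization of a noetherian or artinian $R$-module is what that clause demands, and the conclusion supplies piecewise quasipolynomiality of degree at most $\rank Q = d$. The Betti numbers $\beta_i(M_\nn) = \length(\Tor_i^Q(M_\nn,\kk))$ of part~\ref{i:bettis} are the same assertion with $N = \kk$, which is artinian as an $R$-module, being simple (the action of $R$ factors through $R/\mm = \kk$).

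For the generator counts $\mu(\Tor^Q_i(M_\nn, N))$ and $\mu(M_\nn) = \beta_0(M_\nn)$, the reduction hinges on the elementary identity
$$
  \mu(T) \;=\; \dim_\kk(T/\mm T) \;=\; \length(T \otimes_R \kk) \;=\; \length(\Tor^Q_0(T, \kk)),
$$
valid whenever $T$ is a $Q$-graded $R$-module with finite-dimensional graded components. I would first invoke Theorem~\ref{t:tor-family}, with the fixed module $N$ viewed as a singleton family, to conclude that $\{T_\nn := \Tor^Q_i(M_\nn, N)\}_{\nn\in\ZZ^k}$ is itself a constructible family of $Q$-modules over a Rees monoid. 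I would then apply the ``in particular'' clause of Corollary~\ref{c:tor-family->quasipol} a second time, now with $\{T_\nn\}$ as the constructible input and the artinian $R$-module $\kk$ in place of $N$, to deduce that $\length(\Tor^Q_0(T_\nn, \kk)) = \mu(T_\nn)$ is piecewise quasipolynomial of degree at most $d$. Taking $i = 0$ and $N = \kk$ recovers the generator count $\mu(M_\nn)$ as a special case.

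The only (rather mild) obstacle is recognizing that $\mu$ should be handled by iterating the same machine rather than by an independent argument: the first invocation of the corollary promotes the $\Tor$-modules themselves into a constructible family, and the second invocation extracts their minimal-generator counts by tensoring with the residue field. Once this is noticed, no further step requires more than a direct citation of Corollary~\ref{c:tor-family->quasipol} together with its underlying Theorem~\ref{t:tor-family}.
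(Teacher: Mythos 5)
Your proof is correct and follows essentially the same route as the paper: the paper likewise derives the length claim directly from the fixed-$N$ clause of Corollary~\ref{c:tor-family->quasipol}, obtains the Betti numbers by specializing to $N = \kk$, and handles $\mu\bigl(\Tor^Q_i(M_\nn,N)\bigr)$ by first using Theorem~\ref{t:tor-family} to make $\{\Tor^Q_i(M_\nn,N)\}_{\nn}$ a constructible family and then feeding that family back into the Betti-number result. Your version merely spells out the iteration more explicitly (and correctly notes that $\kk = R/\mm$ is artinian, so the hypotheses of the corollary are met).
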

\begin{proof}
The length conclusion in part~\ref{i:gens-tor} is part of the
fixed-$N$ case of Corollary~\ref{c:tor-family->quasipol}.
Part~\ref{i:bettis} follows from the length conclusion in
part~\ref{i:gens-tor} with input module $N = \kk$.  The number of
generators conclusion in part~\ref{i:gens-tor} follows from
part~\ref{i:bettis} and the constructibility of $\Tor^Q_i(M_\nn,N)$ from
Theorem~\ref{t:tor-family} with fixed input module~$N$.
\end{proof}

\begin{remark}\label{r:recursive}
The proof of Theorem~\ref{t:tor-final} feels routine, but note that it
rests heavily on the recursive strength of functorial preservation of
constructibility from \S\ref{s:functors-on-families}.
\end{remark}

\begin{remark}\label{r:num-gens}
Theorem~\ref{t:tor-final}.\ref{i:bettis} implies that for a monomial
ideal $I$, the \emph{symbolic defects} $\mu(I^{(n)}/I^n)$ are
quasipolynomial for $n \gg 0$.  This recovers
\cite[Theorem~2.4]{drabkin-guerrieri2020} in the case of monomial
ideals in affine semigroup rings.
\end{remark}

\begin{remark}\label{r:bivariate-tor}
Resolutions of Rees algebras as modules over polynomial rings have
been used to conclude polynomial growth of $\Tor_i(M/I^n M, N/J^m N)$
in a general commutative local noetherian setting, when these modules
all have finite length \cite{theodorescu2003}.  The relevant
resolutions in our setting would be multigraded by the monoid
underlying the Rees algebra but not finely graded over the polynomial
ring.  It might be possible to extend the constructibility methods
here to that setting, but such an extension lies beyond the scope of
the current initial investigation.
\end{remark}

\begin{defn}\label{d:graded-family-of-ideals}
A sequence of ideals $\{I_n\}_{n\in\NN}$ is a \emph{graded family} if $I_0
= R$ and $I_n I_m \subseteq I_{n+m}$ for every $n, m \in \NN$.  A graded family is
\emph{noetherian} if the graded algebra $\bigoplus_{n \in \NN} I_n(-n)$ is
noetherian.
\end{defn}

\begin{example}\label{e:graded-family-of-ideals}
Let $I$ be a monomial ideal of $R$.  By
\cite[Corollary~9.2.1]{swanson-huneke2006}, which is stated for any
ideal in any analytically unramified Noetherian local ring, the
integral closures $\{\ol{I^n}\}_{n\in \NN}$ form a noetherian graded
family.  By \cite[Theorem~3.2]{herzog-hibi-trung2007} so do the
saturations $\{I^n :_R K^\infty\}_{n\in\NN}$ of a monomial ideal $I$ with
respect to a monomial ideal~$K$.
In particular, the symbolic powers $\{I^{(n)}\}_{n\in\NN}$ of a
monomial ideal $I$ form a noetherian graded family.  When $Q_+$ is
smooth, the direct sum of multiplier ideals $\bigoplus_{n\in\NN}
\mathcal{J}(I^n)(-n)$ is a noetherian module over the Rees monoid of
$I$ \cite[Proposition~18.2.4]{swanson-huneke2006}, which is stated for
ideals in any regular~domain.
\end{example}

\begin{example}\label{e:length-of-tor}
Fix monomial ideals $I$ and~$J$ in $R = \kk[\NN^d]$, and let $M$ be a
noetherian $R$-module.  The number of generators
\noheight{$\mu\bigl(\Tor_i^Q(R/\mathcal{J}(I^n :_R J^n), M)\bigr)$} and the
lengths $\length\bigl(\Tor_i^Q(R/\mathcal{J}(I^n :_R J^n), M)\bigr)$ are
quasipolynomials of degree at most $d$ for $n \gg 0$.
\end{example}

\subsection{Ext and Bass numbers}\label{b:ext-bass}

\begin{defn}\label{d:bass}
The \emph{Bass number} $\mu_i(\pp,M)$ of a $Q$-module $M$ at the monomial
prime ideal $\pp$ in cohomological degree $i$ is the number of
indecomposable summands isomorphic to a shift of the multigraded
injective hull of $R/\pp$ appearing at stage $i$ in any minimal graded
injective resolution of $M$.
\end{defn}

\begin{remark}\label{r:bass}
Definition~\ref{d:bass} of Bass numbers
agrees with the usual (ungraded) Bass numbers from commutative algebra
\cite[Theorems~1.2.3 and~1.3.4]{goto-watanabe1978ii}.  These numbers
also determine the ungraded Bass numbers at all primes
\cite[Theorem~1.2.3]{goto-watanabe1978ii}. For this reason we state
our results on Bass numbers only for monomial prime ideals.
\end{remark}

\begin{lemma}\label{l:bass}
If $\pp$ is a monomial prime then $\pp = \kk[Q_+ \minus F]$ for some face $F$
of~$Q_+$ (Definition~\ref{d:face}).  The Bass numbers of~$M$ at~$\pp$
can be computed as
$$
  \mu_i(\pp,M)
  =
  \rank_{(R/\pp)_F} \eext_Q^i(R/\pp,M)_F
$$
where $(-)_F$ denotes localization as in
Definition~\ref{d:localization} and $\rank_{(R/\pp)_F}$ is the rank as a
free module over~$(R/\pp)_F$.
\end{lemma}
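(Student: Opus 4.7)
The plan handles the two claims in order. For the first, a monomial ideal has the form $\pp = \kk\{S\}$ for a subset $S \subseteq Q_+$ closed under adding elements of~$Q_+$ (Definition~\ref{d:indicator-module}). Primeness of~$\pp$ translates to the condition $\aa + \bb \in S \implies \aa \in S$ or $\bb \in S$. Setting $F := Q_+ \minus S$, this says $F$ is closed under addition, so $F$ is a submonoid of~$Q_+$ whose complement is an ideal, hence a face (Definition~\ref{d:face}). Thus $\pp = \kk[Q_+ \minus F]$.

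For the second claim, fix a minimal graded injective resolution $I^\spot$ of~$M$. By Remark~\ref{r:injectives} each $I^i$ is a direct sum of shifted indecomposable injectives of the form $\kk\{\rr + F' - Q_+\}$ indexed by faces~$F'$ of~$Q_+$ and shifts $\rr \in Q$; the injective hull of~$R/\pp$ is a shift of $\kk\{F - Q_+\}$, so $\mu_i(\pp,M)$ counts summands of~$I^i$ having~$F' = F$. The plan is to compute $\eext^i_Q(R/\pp, M)$ as the cohomology of $\hhom_Q(R/\pp, I^\spot)$ and then localize along~$F$.

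The key computation is that $\hhom_Q(R/\pp, \kk\{\rr + F' - Q_+\})_F$ is a free $(R/\pp)_F$-module of rank one when $F' = F$ and vanishes otherwise. A $Q$-homomorphism from~$R/\pp$ is determined by the image of~$1$, which must be annihilated by every monomial in $Q_+ \minus F$; inverting the monomials in~$F$ picks out precisely the summands whose face is~$F$. Combined with the minimality of~$I^\spot$, which forces each differential $I^i \to I^{i+1}$ to land in $\mm I^{i+1}$ so that the induced differential on $\hhom_Q(R/\pp, I^\spot)_F$ vanishes, one obtains $\eext^i_Q(R/\pp, M)_F \cong \hhom_Q(R/\pp, I^i)_F$ free of rank $\mu_i(\pp, M)$ over~$(R/\pp)_F$.

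The main obstacle will be the case analysis underpinning the key computation: verifying that $\hhom_Q(R/\pp, \kk\{\rr + F' - Q_+\})_F$ vanishes whenever $F' \neq F$. The two subcases $F \not\subseteq F'$ (some generator of~$F$ acts by zero on the target after localization) and $F \subsetneq F'$ (elements of $F' \minus F$ act invertibly on the target but as zero on $R/\pp$) both force the Hom to vanish. Additional care is needed for infinite direct sums in the non-noetherian case and to account for the fact that $(R/\pp)_F = \kk[\ZZ F]$ is a Laurent polynomial ring rather than a field, so ``rank'' here means rank of a free module.
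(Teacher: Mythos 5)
The paper's proof of this lemma is a one-line citation to \cite[Theorem~1.1.4]{goto-watanabe1978ii}, so you are reconstructing the argument that the cited reference supplies. Your reconstruction follows the standard route: identify monomial primes with faces via the face condition, write a minimal graded injective resolution $I^\spot$ whose terms decompose into shifted indecomposables $\kk\{\rr + F' - Q_+\}$ (Remark~\ref{r:injectives}), compute $\eext$ as cohomology of $\hhom_Q(R/\pp, I^\spot)$, and localize along~$F$ to kill all summands with $F' \neq F$. The case analysis is correct: when $F \subsetneq F'$, an element of $F' \minus F$ lies in~$\pp$ but acts invertibly on the target after localizing along~$F$, forcing the $\hhom$ to vanish; and when $F \not\subseteq F'$, the cleaner justification is that any $\ff \in F \minus F'$ acts \emph{locally nilpotently} on $\kk\{\rr + F' - Q_+\}$ (because $\qq + n\ff$ eventually leaves the downset, using the identity $Q_+ \cap \ZZ F' = F'$ for faces of affine semigroups), so the localization along~$F$ of the target is zero outright, rather than ``acts by zero after localization.'' In the remaining case $F' = F$ one checks directly that $\hhom_Q\bigl(R/\pp, \kk\{\rr + F - Q_+\}\bigr) \cong \kk\{\rr + \ZZ F\} \cong (R/\pp)_F(\rr)$, a rank-one free module.

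The step that deserves more care is the vanishing of the induced differential. Writing ``$I^i \to I^{i+1}$ lands in $\mm I^{i+1}$'' is not the right minimality criterion here, and in any case $(R/\pp)_F \cong \kk[\ZZ F]$ is a Laurent ring, not local, so a Nakayama-style argument does not apply directly. The correct observation is structural: every nonzero component $\kk\{\rr + F - Q_+\} \to \kk\{\rr' + F - Q_+\}$ of the differential between summands with face~$F$ is, by \cite[Corollary~3.11.2]{hom-alg-poset-mods}, scalar multiplication composed with an inclusion, and after applying $\hhom_Q(R/\pp,-)_F$ it becomes a map $(R/\pp)_F(\rr) \to (R/\pp)_F(\rr')$ that is nonzero only when $\rr - \rr' \in \ZZ F$, in which case it is an isomorphism; but any such isomorphism component contradicts minimality of~$I^\spot$. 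With that replacement, your argument is complete and agrees with the content of the Goto--Watanabe theorem the paper cites.
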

\begin{proof}
See \cite[Theorem~1.1.4]{goto-watanabe1978ii}.
\end{proof}

\begin{thm}\label{t:ext-final}
Let $\{M_\nn\}_{\nn\in\ZZ^k}$ be a constructible family of $Q$-modules.
\begin{enumerate}

\item\label{i:gens-ext}%
If $N$ is a localization of a noetherian or artinian $R$-module, then
as functions of $\nn \in \ZZ^k$, the lengths
$$
  \length\bigl(\eext^i_Q(M_\nn,N)\bigr)
  \quad\text{and}\quad
  \length\bigl(\eext^i_Q(N,M_\nn)\bigr)
$$
and numbers of generators
$$
  \mu\bigl(\eext^i_Q(M_\nn,N)\bigr)
  \quad\text{and}\quad
  \mu\bigl(\eext^i_Q(N,M_\nn)\bigr)
$$
are piecewise quasipolynomials of degree at most~$d$.

\item\label{i:bass}%
If $\pp$ is a monomial prime ideal, then the Bass numbers $\mu_i(\pp,M_\nn)$
are piecewise quasipolynomial of degree at most $d - \dim F$.
%
\end{enumerate}
\end{thm}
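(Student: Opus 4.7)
For part~\ref{i:gens-ext}, the length statements follow directly from Corollary~\ref{c:ext-family->quasipol}: Lemma~\ref{l:noetherian-or-artinian} supplies both a semisimply flat resolution and a semisimply injective resolution of any localization $N$ of a noetherian or artinian $R$-module, so the hypotheses of both scenarios are met. For the numbers of generators, first invoke Theorem~\ref{t:ext-family} to see that $\{\eext^i_Q(M_\nn, N)\}_{\nn\in\ZZ^k}$ and $\{\eext^i_Q(N, M_\nn)\}_{\nn\in\ZZ^k}$ are themselves constructible families of $Q$-modules; then apply Theorem~\ref{t:tor-final}.\ref{i:bettis} to each, since $\mu(L) = \beta_0(L)$.

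For part~\ref{i:bass}, write $\pp = \kk[Q_+\minus F]$, so $R/\pp = \kk[F]$ is noetherian. Theorem~\ref{t:ext-family} makes $\{\eext^i_Q(R/\pp, M_\nn)\}$ a constructible family, and since the face $F$ is semisimple and Minkowski summable, Lemma~\ref{l:summable} preserves constructibility under localization along $F$. Write $L_\nn = \eext^i_Q(R/\pp, M_\nn)_F$. By Lemma~\ref{l:bass}, the Bass number is $\mu_i(\pp, M_\nn) = \rank_{\kk[\ZZ F]} L_\nn$.

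To extract this rank as a $\kk$-dimension count, choose a semisimple transversal $D \subseteq Q$ for $\ZZ F$ of rank $d - \dim F$; such a $D$ exists because $Q$ is finitely generated free abelian (take a subgroup complementary to the saturation of $\ZZ F$ and then union in finitely many translates to hit every coset of $\ZZ F$ inside that saturation). The $\kk[\ZZ F]$-module structure, with all nonzero monomials of $\kk[\ZZ F]$ acting as isomorphisms, forces $\dim_\kk (L_\nn)_\qq$ to depend only on the coset $\qq + \ZZ F$. Combined with the freeness from Lemma~\ref{l:bass}, this gives $\rank_{\kk[\ZZ F]} L_\nn = \sum_{\qq \in D} \dim_\kk (L_\nn)_\qq$, the length of the linear truncation $L_\nn|_D$. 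Since $\ZZ^k \times D$ is semisimple in $\ZZ^k \times Q$, Remark~\ref{r:truncate} makes $\{L_\nn|_D\}_{\nn\in\ZZ^k}$ numerically constructible. Applying Theorem~\ref{t:num-constructible=>quasipol}.\ref{i:length-quasipol} with $D$ as the grading group then completes part~\ref{i:bass}: with $\rank D = d - \dim F$ counted variables, $\mu_i(\pp, M_\nn) = \length(L_\nn|_D)$ is piecewise quasipolynomial of degree at most $d - \dim F$.

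The main obstacle is the rank extraction in part~\ref{i:bass}: it relies on Lemma~\ref{l:bass} for both the finiteness and the freeness of $L_\nn$ over $\kk[\ZZ F]$, and on the observation that this $\kk[\ZZ F]$-rank equals a $\kk$-dimension sum over a semisimple transversal for $Q/\ZZ F$, thereby reducing the Bass number count to a length count of the form already handled by Theorem~\ref{t:num-constructible=>quasipol}.
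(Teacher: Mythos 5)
Your proof of part~\ref{i:gens-ext} matches the paper's exactly: length from the fixed-$N$ case of Corollary~\ref{c:ext-family->quasipol}, and $\mu$ by running Theorem~\ref{t:tor-final}.\ref{i:bettis} on the constructible families produced by Theorem~\ref{t:ext-family}.

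For part~\ref{i:bass} you follow the same overall route (constructibility of $\eext^i_Q(R/\pp, M_\nn)$ from Theorem~\ref{t:ext-family}, localization along~$F$ via Lemma~\ref{l:summable}, Bass number as rank via Lemma~\ref{l:bass}, then rank extracted as a $\kk$-length), but with one genuine refinement. The paper restricts to a subgroup $F^\perp \subseteq Q$ with $F^\perp \cap \ZZ F = \{\0\}$ and $\rank F^\perp = d - \dim F$, and asserts $\rank_{(R/\pp)_F} L_\nn = \dim_\kk(L_\nn|_{F^\perp})$. That identity requires $F^\perp$ to meet every coset of $\ZZ F$ in $Q$, which fails when $\ZZ F$ is not saturated in $Q = \ZZ Q_+$ — e.g.\ $Q_+ = \NN\{(2,0),(1,1),(0,1)\}$, $F = \NN\{(2,0)\}$, $\ZZ F = 2\ZZ \times 0$, where $F^\perp \oplus \ZZ F$ has index~$2$. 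Your transversal $D$, built as a finite union of translates of a complement $\Lambda'$ to the saturation of $\ZZ F$, hits each coset of $\ZZ F$ exactly once while remaining semisimple, so the identity $\rank_{\kk[\ZZ F]} L_\nn = \sum_{\qq\in D}\dim_\kk(L_\nn)_\qq$ holds unconditionally. That buys correctness in the unsaturated case. The one spot to tighten is your final citation: $D$ is not a group, so you cannot literally apply Theorem~\ref{t:num-constructible=>quasipol} ``with $D$ as the grading group.'' Instead, expand $D = \bigcupdot_j (\aa_j + \Lambda')$, shift by $-\aa_j$ to get a $\Lambda'$-graded family for each~$j$, apply Theorem~\ref{t:num-constructible=>quasipol}.\ref{i:length-quasipol} over the Presburger group $\Lambda'$ (rank $d - \dim F$), and sum the finitely many piecewise quasipolynomials; this yields the claimed degree bound $d - \dim F$ cleanly.
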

\begin{proof}
The length conclusion in part~\ref{i:gens-tor} is part of the
fixed-$N$ case of Corollary~\ref{c:ext-family->quasipol}.  The number
of generators conclusion in part~\ref{i:gens-tor} follows from
Theorem~\ref{t:tor-final}.\ref{i:bettis} and the constructibility of
the relevant families of $\eext$ modules from
Theorem~\ref{t:ext-family} with fixed input module~$N$.  For
part~\ref{i:bass}, the family $\eext_Q^i(R/\pp,M_\nn)$ is
constructible by Theorem~\ref{t:ext-family} with $N = R/\pp$.  Since
localization preserves constructibility by Lemma~\ref{l:summable}, the
family $\eext_Q^i(R/\pp,M_\nn)_F$ is constructible.  Let $F^\perp
\subseteq Q$ be a complement to~$\ZZ F$, meaning that $F^\perp \cap
\ZZ F = \{\0\}$ and $\rank F^\perp + \rank F = d = \rank Q$.  Then
\begin{itemize}
\item%
the restriction $\{E_\nn\}_{\nn \in \ZZ^k}$ of the family
$\eext_Q^i(R/\pp,M_\nn)_F$ to~$F^\perp$ is constructible over the Presburger
group $F^\perp \cap Q$ because the intersection of any semisimple set with a
subgroup of~$Q$ is semisimple; and
\item%
for each $\nn \in \ZZ^k$, the rank of $\eext_Q^i(R/\pp,M_\nn)_F$ over $(R/\pp)_F$
equals the dimension of~$E_\nn$ as a vector space over~$\kk$ because
$\eext_Q^i(R/\pp,M_\nn)_F$ is free over $(R/\pp)_F$.
\end{itemize}
These vector space dimensions $\dim_\kk E_\nn$ are therefore the desired
Bass numbers by Lemma~\ref{l:bass}, and they are piecewise
quasipolynomial of degree at most $n - \dim F$ by
Theorem~\ref{t:num-constructible=>quasipol}.\ref{i:length-quasipol}
via Lemma~\ref{l:unassuming}, because that is the rank of~$F^\perp$.
\end{proof}

\begin{remark}\label{r:kodiyalam}
Theorems~\ref{t:tor-final}.\ref{i:gens-tor}
and~\ref{t:ext-final}.\ref{i:gens-ext}, together with
Proposition~\ref{p:multiply-ideal-module}, extend to constructible
modules the results in \cite{kodiyalam1993,theodorescu2002} about
polynomial behavior of lengths and number of generators of $\Tor$
and~$\Ext$.
\end{remark}

%
%
%
%
\begin{example}\label{e:bass-numbers}
For any monomial ideal $I$ and monomial prime ideal $\pp$,
the 
Bass numbers $\mu_i\bigl(\pp, \ol{I^n}/I^{5n-2})\bigr)$ are
quasipolynomials of degree at most~$d$ for $n \gg 0$.
\end{example}

\subsection{\texorpdfstring{$v$}{v}-invariants}\label{b:v-invariants}

\begin{defn}\label{d:v-invariant}
The {$v$-invariant} of a monomial ideal~$I$ at a prime $\pp \in \Ass(R/I)$
is
$$
  v_\pp(I)
  =
  \min\{|\uu| \mid \pp = I :_R f \text{ for some } f \in R_\uu\}.
$$
\end{defn}

\begin{remark}\label{r:v-invariant}
In \cite{conca2024}, \cite{ficarra-sgroi2023} it was shown that for
any $\pp \in \Ass^\infty(I)$ the sequence $v_\pp(I^n)$ agrees with a
linear polynomial for $n \gg 0$ (their results work more generally for
homogeneous ideals in noetherian standard graded domains).  The
present goal is to extend this result to constructible families of
$Q$-modules.
\end{remark}

\begin{lemma}\label{l:annihilators-constructible}
Given a constructible family $\{M_\nn\}_{\nn \in Z}$ of $Q$-modules
and a monomial ideal $I \subseteq R$, the family $\{0:_{M_\nn}
I\}_{\nn \in Z}$ of $Q$-modules is constructible.
\end{lemma}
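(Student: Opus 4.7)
The plan is to identify the annihilator $0:_{M_\nn} I$ with a Hom module and then invoke Theorem~\ref{t:ext-family}. More precisely, I would observe that a degree-$\qq$ $R$-linear map $R/I \to M_\nn$ is uniquely determined by the image of $1 + I$, which must lie in $(M_\nn)_\qq$ and be annihilated by $I$. This gives a canonical degree-preserving bijection
$$
  \hhom_Q(R/I, M_\nn)_\qq \cong (0 :_{M_\nn} I)_\qq.
$$
The $R$-action on~$\hhom_Q(R/I, M_\nn)$, namely $(\xx^\rr \phi)(1+I) = \phi(\xx^\rr + I) = \xx^\rr\phi(1+I)$, matches the $R$-action on $0 :_{M_\nn} I$ induced by inclusion in~$M_\nn$, so the bijection above upgrades to an isomorphism of $Q$-modules $\hhom_Q(R/I, M_\nn) \cong 0 :_{M_\nn} I$. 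Hence the lemma reduces to the constructibility of the family $\{\eext^0_Q(R/I, M_\nn)\}_{\nn \in Z}$.

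Next, I would apply the ``in particular'' form of Theorem~\ref{t:ext-family} at $i = 0$ with the fixed input module $N = R/I$ and the given constructible input family~$\{M_\nn\}$. Under Convention~\ref{conv:applications}, $Q_+$~is an affine semigroup, so $R = \kk[Q_+]$ is noetherian and $R/I$ is a finitely generated, hence noetherian, $R$-module. Lemma~\ref{l:noetherian-or-artinian} then furnishes a semisimply flat resolution of $R/I$ in every homological degree, which verifies the hypothesis in scenario~\ref{i:flat-family} of Theorem~\ref{t:ext-family} at $i = 0$ (with the trivial Rees monoid $G_+ = Q_+$ on the fixed side, for which $\kk[G_+] = \kk[Q_+]$ is trivially flat over itself). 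The conclusion of the theorem is then exactly the constructibility of $\{\eext^0_Q(R/I, M_\nn)\}_{\nn \in Z} = \{0 :_{M_\nn} I\}_{\nn \in Z}$.

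There is no genuine obstacle: this lemma is a brief harvest of the preservation machinery of \S\ref{s:functors-on-families}, and the only concrete check is the Hom/annihilator identification, which is standard. As an alternative route one could proceed more directly: choose monomial generators $\xx^{\qq_1}, \ldots, \xx^{\qq_r}$ of~$I$, observe that multiplication by~$\xx^{\qq_j}$ is a structure-map morphism $\cM \to \cM(0, \qq_j)$ of associated $G$-modules and is therefore constructible, and then realize $0 :_\cM I$ as the kernel of the diagonal morphism $\cM \to \bigoplus_{j=1}^r \cM(0, \qq_j)$, which is constructible by \cite[Proposition~4.30.1]{hom-alg-poset-mods}. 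I~prefer the Hom-based route because it keeps the proof parallel in form to the other applications in \S\ref{s:applications}.
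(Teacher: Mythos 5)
Your preferred (Hom/Ext) route is correct and genuinely different from the paper's proof; your secondary sketch is essentially the paper's argument. The paper picks monomial generators $\xx^{\qq_1}, \ldots, \xx^{\qq_r}$ of $I$ (finitely many, by Convention~\ref{conv:applications}), observes that multiplication by $\xx^{\qq_i}$ induces a constructible endomorphism $\phi_i\colon \cM \to \cM$ citing Proposition~\ref{p:multiply-ideal-module}, and writes $0 :_\cM I = \bigcap_{i=1}^r \ker(\phi_i)$, which is constructible because kernels of constructible morphisms are constructible by \cite[Proposition~4.30.1]{hom-alg-poset-mods} and a common refinement of subordinate semisimple subdivisions handles the finite intersection. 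That is the same content as your kernel-of-the-diagonal observation, only the paper reaches constructibility of the $\phi_i$ through Proposition~\ref{p:multiply-ideal-module} rather than your ``structure-map'' shortcut. Your preferred route via the identification $\eext^0_Q(R/I, M_\nn) \cong 0 :_{M_\nn} I$ and the ``in particular'' clause of Theorem~\ref{t:ext-family} at $i = 0$ is also valid: under Convention~\ref{conv:applications} the ring $R = \kk[Q_+]$ is noetherian, so $R/I$ is a noetherian fixed input module, scenario~\ref{i:flat-family} applies via Lemma~\ref{l:noetherian-or-artinian}, and the $G$-module structure on the Ext family coming from functoriality of $\eext$ agrees with the one on $0 :_\cM I$ as a submodule of~$\cM$, so constructibility transfers. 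The tradeoff: your Ext route is uniform with the other applications in \S\ref{s:applications} and leans only on the family-level machinery of \S\ref{s:functors-on-families}, while the paper's direct kernel route avoids routing through the Matlis-duality argument behind Theorem~\ref{t:ext-family} and stays self-contained at the level of~$\cM$.
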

\begin{proof}
Let $G_+$ be an underlying Rees monoid of $\{M_\nn\}_{\nn \in Z}$, so
the associated $G$-module $\cM = \bigoplus_{\nn \in Z} M_\nn(-\nn)$ is
constructible.  By Convention~\ref{conv:applications}.\ref{i:affine},
the ideal $I$~is finitely generated by monomials $\xx^{\qq_1}, \ldots,
\xx^{\qq_r}$.  Multiplication by $\xx^{\qq_i}$ induces an endomorphism
$\phi_i: \cM \to \cM$ that is constructible by
Proposition~\ref{p:multiply-ideal-module}.  The kernel therefore
yields a constructible submodule $\ker(\phi_i) \subseteq \cM$
\cite[Proposition~4.30.1]{hom-alg-poset-mods}.  The common refinement
of any constant subdivisions subordinate to these kernels is
subordinate to their intersection.  Therefore $\bigoplus_{\nn \in Z}
(0:_{M_\nn} I)(-\nn) = 0:_\cM I = \bigcap_{i=1}^r \ker(\phi_i)$ is a
constructible $G$-module.
\end{proof}

\begin{defn}\label{d:lambda-v-invariant}
Fix a $Q$-module~$M$, any monomial prime $\pp \subseteq \kk[Q_+]$,
and a linear function $\lambda: Q \to \ZZ$.  The \emph{$v$-invariant} of~$M$
at~$\pp$ \emph{in direction~$\lambda$} is
$$
  v_\pp(\lambda,M) = \min\{\<\lambda,\uu\> \mid \pp = I :_R f \text{ for some } f \in R_\uu\}.
$$
The case $\lambda = (1,\ldots,1)$ is Definition~\ref{d:v-invariant}.
\end{defn}

\begin{thm}\label{t:v-invariants}
For a constructible family $\{M_\nn\}_{\nn \in Z}$ of $Q$-modules, monomial
prime~$\pp$, and linear $\lambda \hspace{-1pt}:\hspace{-1pt} Q \to \ZZ$, the
$v$-invariants $v_\pp(\lambda,\hspace{-1.1pt} M_\nn)$ in direction~$\lambda$ are
piecewise~\mbox{quasilinear}.
\end{thm}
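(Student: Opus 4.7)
The plan is to realize $v_\pp(\lambda,M_\nn)$ as the function ${\min}_{\lambda,\mathcal{B}}$ in Theorem~\ref{t:num-constructible=>quasipol}.\ref{i:maxmin-quasipol} applied to a carefully constructed constructible family $\{B_\nn\}_{\nn\in Z}$. Let $F$ be the support face of $\pp$, so $\pp = \kk\{Q_+\minus F\}$. By Lemma~\ref{l:annihilators-constructible} the family $\{A_\nn\}_{\nn\in Z}$ with $A_\nn = 0 :_{M_\nn} \pp$ is constructible; write $\cM^A = \bigoplus_{\nn\in Z} A_\nn(-\nn)$ for its associated $G$-module. Embedded as $\{\0\}\times F \subseteq \{\0\}\times Q_+ \subseteq G_+$, the face~$F$ is semisimple and Minkowski summable (Remark~\ref{e:summable}), so Lemma~\ref{l:summable} says the localization $\cM^A \to (\cM^A)_F$ is a constructible morphism of $G$-modules whose slice at~$\nn$ is the classical $Q$-module localization $A_\nn \to (A_\nn)_F$. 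Its image is a constructible $G$-submodule of $(\cM^A)_F$ by \cite[Proposition~4.30.1]{hom-alg-poset-mods}, so the family $\{B_\nn\}_{\nn\in Z}$ with $B_\nn = \image\bigl(A_\nn \to (A_\nn)_F\bigr)$ is constructible.

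The next step identifies when $(B_\nn)_\uu \neq 0$ in terms of annihilators. Since $\pp$ is generated by $\{\xx^\qq \mid \qq \in Q_+ \minus F\}$, and the annihilator of a homogeneous element in a $Q$-module is a monomial ideal, one has $\ass_R(f) = \pp$ for $f \in M_\uu$ if and only if $\xx^\qq f = 0$ for every $\qq \in Q_+\minus F$ and $\xx^\qq f \neq 0$ for every $\qq \in F$. (Here I am using $\ass_R(f)$ for the annihilator; since the proper symbol is cumbersome, I will write it plainly.) The first condition says exactly $f \in (A_\nn)_\uu$, and the second says exactly that $f/1$ is nonzero in $(A_\nn)_F$ because a class in a localization along~$F$ vanishes iff it is killed by some $\xx^\qq$ with $\qq \in F$. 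Hence $(B_\nn)_\uu \neq 0$ if and only if some $f \in M_\uu$ has annihilator equal to $\pp$, and consequently
\[
  v_\pp(\lambda, M_\nn)
  = \min\bigl\{\<\lambda,\uu\> \mathbin{\big|} (B_\nn)_\uu \neq 0\bigr\}
  = {\textstyle\min_{\lambda,\mathcal{B}}}(\nn).
\]

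Lemma~\ref{l:unassuming} then implies that the constructible family $\{B_\nn\}_{\nn\in Z}$ is numerically constructible, so Theorem~\ref{t:num-constructible=>quasipol}.\ref{i:maxmin-quasipol} applies: the domain $\soc_\lambda \mathcal{B}$ on which ${\min}_{\lambda,\mathcal{B}}$ is finite is semisimple, and on it the function is piecewise quasilinear. By Convention~\ref{conv:domains} this is the implicit domain of $v_\pp(\lambda,M_\nn)$, and the theorem follows.

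The main potential obstacle is the degree-by-degree identification in the second paragraph, which rests on the elementary but essential observation that an element of a localization~$N_F$ vanishes precisely when killed by some~$\xx^\qq$ with $\qq \in F$; once the family $\mathcal{B}$ is correctly assembled, the rest is a direct application of the constructibility-preserving machinery of \S\ref{s:flat} together with the $\min$/$\max$ quasilinearity output of Theorem~\ref{t:num-constructible=>quasipol}.
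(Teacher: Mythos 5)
Your proof is correct, and it takes a genuinely different route from the paper's. Where the paper invokes \cite[Lemma~1.2]{conca2024} to identify the homogeneous elements with annihilator exactly~$\pp$ as the nonzero classes in the quotient $(0:_M \pp)/\bigl(0:_M(\pp + J^\infty)\bigr)$ --- with $J$ the product of the monomial primes properly containing~$\pp$ --- you instead realize them as the nonzero elements in the image of the localization map $A_\nn \to (A_\nn)_F$, where $A_\nn = 0:_{M_\nn}\pp$ and $F$ is the support face of~$\pp$. Your approach is self-contained: once you note that annihilators of homogeneous elements are monomial ideals, the characterization ``$0:_R f = \pp$ iff $\pp f = 0$ and no $\xx^\qq$ with $\qq \in F$ kills~$f$'' falls out immediately, and the latter condition is precisely $f/1 \neq 0$ in the localization. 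This bypasses any Artin--Rees-style ``$J^\infty$'' stabilization argument entirely. The paper's version hews more closely to \cite{conca2024} and avoids passing through localization; both routes feed the same constructible family (which in fact has the same support degree by degree) into Theorem~\ref{t:num-constructible=>quasipol}.\ref{i:maxmin-quasipol}.

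One small correction to your citations: the step establishing that the natural localization morphism $\cM^A \to (\cM^A)_F$ is itself a constructible (class-$\XX$) morphism is Proposition~\ref{p:Cech-tame}, not Lemma~\ref{l:summable}. Lemma~\ref{l:summable} asserts that localizing an already-given class-$\XX$ morphism yields a class-$\XX$ morphism; the fact that the unit map $M \to M_S$ is class-$\XX$ when $M$ is class-$\XX$ and $S$ is Minkowski summable is what Proposition~\ref{p:Cech-tame} provides. With that substitution, your chain (Lemma~\ref{l:annihilators-constructible}, then Proposition~\ref{p:Cech-tame}, then \cite[Proposition~4.30.1]{hom-alg-poset-mods} for the image) is sound.
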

\begin{proof}
Let $J$ be the product of the (finitely many) monomial prime ideals
properly containing $\pp$, with $J = R$ if no such prime exists.  The
proof of \cite[Lemma~1.2]{conca2024} applies more generally to our
setting to show that for a $Q$-module $M$ the set of homogeneous
elements $\{f \in M \mid \pp = 0:_R f\}$ are those whose images in $(0 :_M \pp)
/ \bigl(0 :_M (\pp + J^\infty)\bigr)$ is nonzero, where $0 :_M (\pp + J^\infty) =
\bigcup_{m \gs 0} \bigl(0 :_M (\pp + J^m)\bigr)$.  The conclusion now follows by
applying
Theorem~\ref{t:num-constructible=>quasipol}.\ref{i:maxmin-quasipol} to
the family $\bigl\{(0:_{M_\pp} \pp) / \bigl(0:_{M_\nn}
(\pp+Q^\infty)\bigr)\bigr\}_{\nn \in Z}$, which is constructible by
Lemma~\ref{l:annihilators-constructible}.
\end{proof}

\begin{remark}
Theorem~\ref{t:v-invariants} recovers for monomial ideals results in
\cite{ficarra-sgroi2025} and \cite{a-sarkar2025} about linear behavior
of $v$-invariants of noetherian graded families of ideals.  Moreover,
our theorem shows that for monomial ideals $I_1, \ldots, I_k$ and a
monomial prime ideal~$\pp$, the $v$-invariants $v_\pp(I_1^{n_1} \cdots
I_k^{n_k})$ are piecewise quasilinear.
\end{remark}

\subsection{Degrees and multiplicities}\label{b:degrees}

\begin{defn}\label{d:deg}
Recall Convention~\ref{conv:applications} regarding $\mm \subseteq R = \kk[Q_+]$.
Fix an $\mm$-primary ideal~$I$, so $(\xx^\qq)^n \in I$ for $n \gg 0$ for every
nonunit monomial~$\xx^\qq$, and a noetherian $R$-module $M$.  The
\emph{Hilbert polynomial (of $M$ with respect to~$I$)} is the
polynomial
$$
  H_{I,M}(m) \in \QQ[m] \text{ of degree } \delta = \dim(M)
$$
such that $\length(M/I^mM) = H_{I,M}(m)$ for all $m \gg 0$.  When this
polynomial is written as
$$
  H_{I,M}(m)
  = e_0(I,M)\binom{m+\delta}{\delta}m^{\delta}
    - e_1(I,M)\binom{m+\delta-1}{\delta-1}m^{\delta-1}
    + \cdots + (-1)^{\delta}e_{\delta}(I,M),
$$
the integers $e_i(I,M)$ are the \emph{Hilbert coefficients}.  The
coefficient $e_0(I,M) \in \NN$ is commonly referred to as the
\emph{Hilbert-Samuel multiplicity}.  When $I = \mm$, the leading
coefficient $e_0(\mm,M)$ is also called the \emph{degree of $M$} and
denoted by $\deg(M)$.
\end{defn}

\begin{thm}\label{t:hilbert-coeffs}
Fix a constructible family $\{M_\nn\}_{\nn\in\ZZ^k}$ of noetherian $Q$-modules
and an $\mm$-primary monomial ideal $I$ in the affine semigroup ring $R
= \kk[Q_+]$.  The Hilbert coefficients $e_i(I,M_\nn)$ for each fixed~$i$
are piecewise quasipolynomial of degree at most~$d$.
\end{thm}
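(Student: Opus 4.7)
The plan is to realize each $e_i(I, M_\nn)$ as a fixed rational linear combination of coefficients of a Hilbert polynomial that is itself extracted from a bivariate constructible length function, and then to apply the piecewise quasipolynomiality machinery of \S\ref{s:families}. By Theorem~\ref{t:ideals-families-strong}.\ref{i:powers} applied to the constant family $\{I\}_{m \in \ZZ}$ with $a = 1$ and $b = 0$, the powers $\{I^m\}_{m \in \ZZ}$ form a constructible family over the Rees monoid of~$I$, and $I^0 = R$. Pairing this with $\{M_\nn\}_{\nn \in \ZZ^k}$ through Proposition~\ref{p:multiply-ideal-module}.\ref{i:quotients'} yields a constructible family $\{M_\nn / I^m M_\nn\}_{(\nn, m) \in \ZZ^{k+1}}$ over the associated double Rees monoid. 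Each quotient has finite length because $I$ is $\mm$-primary and $M_\nn$ is noetherian, so Lemma~\ref{l:unassuming} combined with Theorem~\ref{t:num-constructible=>quasipol}.\ref{i:length-quasipol} makes the function $g(\nn, m) := \length(M_\nn/I^m M_\nn)$ piecewise quasipolynomial on $\ZZ^{k+1}$ of total degree at most~$d = \rank Q$.

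Next I extract the Hilbert polynomial coefficients from $g$. Decompose $\ZZ^{k+1} = \bigcupdot_\alpha (\Gamma_\alpha \cap \ZZ^{k+1})$ with $g = Q_\alpha$ on $\Gamma_\alpha$ and each $Q_\alpha$ a quasipolynomial of degree at most $d$ with period sublattice $\Lambda_\alpha$. For each fixed $\nn \in \ZZ^k$, the vertical ray $\{\nn\} \times \NN$ is eventually confined to a unique piece $\Gamma_{\alpha(\nn)}$, and the assignment $\nn \mapsto \alpha(\nn)$ is constant on a rational polyhedral subdivision of $\ZZ^k$ obtained from the projections of the $m$-recessions of the $\Gamma_\alpha$ (semisimple by Theorem~\ref{t:presburger}). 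On each such $\nn$-piece, restricting $Q_\alpha(\nn, m)$ to each coset of $\Lambda_\alpha \cap (\{\0\} \times \ZZ)$ in $\{\nn\} \times \ZZ$ yields a polynomial in $m$ that agrees for $m \gg 0$ with the Hilbert polynomial $H_{I, M_\nn}(m)$ on an infinite arithmetic progression; two polynomials coinciding on such a set are equal, so every such restriction polynomial equals $H_{I, M_\nn}(m)$ exactly. Consequently the coefficients $c_j(\nn)$ of $H_{I, M_\nn}(m) = \sum_j c_j(\nn) m^j$ are piecewise quasipolynomial in $\nn$, with $c_j$ of degree at most~$d - j$.

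Finally, refine the partition using the piecewise quasiconstant decomposition of $\dim M_\nn$ supplied by Theorem~\ref{t:loc-coh-final}.\ref{i:depths}, so that $\delta = \dim M_\nn$ is constant on each piece. There $e_i(I, M_\nn)$ is a fixed $\QQ$-linear combination of $c_0(\nn), \ldots, c_\delta(\nn)$ arising from the triangular change-of-basis between the monomial basis and the shifted-binomial basis $\bigl\{\binom{m + \delta - j}{\delta - j}\bigr\}$ of $\QQ[m]_{\ls \delta}$, so $e_i(I, M_\nn)$ is piecewise quasipolynomial in $\nn$ of degree at most~$d$. The main obstacle is the middle paragraph: passing from the bivariate piecewise quasipolynomial behavior of $g(\nn, m)$ on $\ZZ^{k+1}$ to coefficient-by-coefficient piecewise quasipolynomial behavior in $\nn$ of the univariate Hilbert polynomial of each $M_\nn$. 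This requires both the asymptotic-recession argument identifying, for each $\nn$, a single polyhedral piece that governs the large-$m$ behavior, and the use of polynomial rigidity on arithmetic progressions to kill any residual quasi-periodic dependence on $m$ along a fixed $\nn$-slice.
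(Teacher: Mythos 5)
Your proof follows the same route as the paper's: both form the bivariate constructible family $\{M_\nn/I^m M_\nn\}_{(\nn,m)\in\ZZ^{k+1}}$ via Proposition~\ref{p:multiply-ideal-module}, apply Theorem~\ref{t:num-constructible=>quasipol}.\ref{i:length-quasipol} to get a piecewise quasipolynomial length function, identify the Hilbert polynomial of $M_\nn$ from the large-$m$ behavior on the recession piece, refine by the quasiconstancy of $\dim M_\nn$ from Theorem~\ref{t:loc-coh-final}.\ref{i:depths}, and extract $e_i$ by a fixed linear transformation of the coefficients. Your middle paragraph merely spells out (via recessions and polynomial rigidity on arithmetic progressions) what the paper compresses into the line ``Since $Q_\alpha$ is defined by finitely many polynomials, $Q_\alpha(m,\nn_0)=H_{I,M_{\nn_0}}(m)$ for every $m$ with $m\nn_0\in\Gamma_\alpha$.''
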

\begin{proof}
The family $\{M_\nn/I^m M_\nn\}_{m\nn \in \ZZ^{k+1}}$ is constructible
by Proposition~\ref{p:multiply-ideal-module}.  Thanks to
Theorem~\ref{t:num-constructible=>quasipol}.\ref{i:length-quasipol}
the lengths $\length(M_\nn/I^m M_\nn)$ are piecewise quasipolynomial
of degree at most~$d$.  Let $\ZZ^{k+1} = \bigcup_{\alpha \in A}
(\Gamma_\alpha \cap \ZZ^{k+1})$ be a finite polyhedral partition
afforded by Definition~\ref{d:piecewise-quasipolynomial}, with the
piecewise quasipolynomial $\length(M_\nn/I^m M_\nn)$ equal to the
quasipolynomial $Q_\alpha(m,\nn)$ on~$\Gamma_\alpha$.  The dimensions
$\dim(M_\nn)$ are quasiconstant by
Theorem~\ref{t:loc-coh-final}.\ref{i:depths}, so further assume that
if $P(m,\nn)$ is one of the polynomials that define $Q_\alpha(m,\nn)$,
then $\dim(M_\nn)$ is constant on the values of $\nn$ for which
$Q_\alpha(m,\nn) = P(m,\nn)$ for $m\nn \in \Gamma_\alpha$.

Let $A' \subseteq A$ be the subset indexing the polyhedra
$\Gamma_\alpha$ in the partition such that $S_\alpha = \{\nn \mid m\nn
\in \Gamma_\alpha \text{ for } m \gg 0\}$ is nonempty.  The sets
$S_\alpha$ for $\alpha \in A'$ polyhedrally partition of~$\RR^k$.  Fix
$\alpha \in A'$ and $\nn_0 \in S_\alpha$.  Then $Q_\alpha(m,\nn_0)$
coincides with the Hilbert polynomial $H_{I,M_{\nn_0}}(m)$ for $m \gg
0$.  Since $Q_\alpha$ is defined by finitely many polynomials,
$Q_\alpha(m,\nn_0) = H_{I,M_{\nn_0}}(m)$ for every $m$ with $m\nn_0
\in \Gamma_\alpha$.  Thus, for every $\nn \in S_\alpha$ the Hilbert
coefficient $e_i(I,M_\nn)$ is the coefficient of $m^{\dim(M_\nn)-i}$
in $Q_\alpha(m,\nn)$, which is a quasipolynomial in~$\nn$ of degree at
most~$d$.
\end{proof}

\begin{remark}\label{r:hilbert-coeffs}
Theorem~\ref{t:hilbert-coeffs} answers in the affirmative (for
constructible families) several questions in
\cite[Introduction]{herzog-puth-verma2008} on the quasipolynomial
behavior of Hilbert coefficients of several sequences of modules, such
as that of $e_i(I^n M / J^n N)$.  Theorem~\ref{t:hilbert-coeffs} also
extends several related results already included in
\cite{herzog-puth-verma2008}.
\end{remark}

\begin{defn}[{\cite[Definition~2.8]{vasconcelos-hom-deg-1998}}]\label{d:hdeg}
Assume $\kk[Q_+]$ is Gorenstein and that $Q_+$ is generated by elements
that lie on a hyperplane (for example, $Q_+ = \NN^d$).  The
\emph{homological degree} of a noetherian $\NN$-graded module~$M$ of
Krull dimension $\delta = \dim(M)$ is defined recursively by
$$
  \hdeg(M) 
  =
  \deg(M) + \sum_{i=d-\delta+1}^d \binom{\delta-1}{i-d+\delta-1} \hdeg\bigl(\Ext^i_R(M,R)\bigr).
$$
\end{defn}

Vasconcelos introduced homological degrees with the goal of extending
good properties of of degrees to modules that are not Cohen--Macaulay,
for instance to provide bounds for numbers of generators.

\begin{cor}\label{c:hdeg}
Assume $Q_+$ satisfies the hypothesis in Definition~\ref{d:hdeg}.  Let
$\{M_\nn\}_{\nn \in Z}$ be a constructible family of noetherian $Q$-modules.
The homological degrees $\hdeg(M_\nn)$ are piecewise quasipolynomial of
degree at most~$d$.
\end{cor}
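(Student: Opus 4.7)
The plan is to induct on the Krull dimension~$\delta$ of the modules in the family, exploiting the recursive structure of $\hdeg$ (Definition~\ref{d:hdeg}) together with functorial preservation of constructibility from Theorems~\ref{t:ext-family} and~\ref{t:hilbert-coeffs}.

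First, by Theorem~\ref{t:loc-coh-final}.\ref{i:depths}, the map $\nn \mapsto \dim(M_\nn)$ is piecewise quasiconstant, hence takes only finitely many distinct values on~$Z$. Thus $Z$ decomposes into finitely many semisimple pieces on each of which $\dim(M_\nn)$ is a fixed constant~$\delta$. Since a finite sum of piecewise quasipolynomial functions supported on semisimple pieces of a common semisimple partition of~$Z$ is again piecewise quasipolynomial---intersect the polyhedral subdivisions underlying each local quasipolynomial with the semisimple partition and recast the common semisimple refinement polyhedrally via Theorem~\ref{t:presburger}---it suffices to prove the conclusion on each piece of fixed uniform dimension~$\delta$.

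In the base case $\delta = 0$ the sum in Definition~\ref{d:hdeg} is empty, so $\hdeg(M_\nn) = \deg(M_\nn) = e_0(\mm, M_\nn)$, which is piecewise quasipolynomial of degree at most~$d$ by Theorem~\ref{t:hilbert-coeffs}. For the inductive step $\delta \gs 1$, Theorem~\ref{t:hilbert-coeffs} again handles $\deg(M_\nn)$. For each $i$ in the summation range $d - \delta + 1 \ls i \ls d$, the family $\{\Ext^i_R(M_\nn, R)\}_{\nn \in Z}$ is constructible by Theorem~\ref{t:ext-family} applied with fixed input module $N = R$ (a trivial localization of the noetherian $R$-module~$R$); each member is a noetherian $R$-module since $R$ and $M_\nn$ are; and by local duality on the Gorenstein ring~$R$, equivalently the standard inequality $\depth \Ext^i_R(M, R) + i \ls d$, one has $\dim \Ext^i_R(M_\nn, R) \ls d - i \ls \delta - 1$. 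The inductive hypothesis therefore yields piecewise quasipolynomials of degree at most~$d$ for each $\hdeg(\Ext^i_R(M_\nn, R))$, and summing with the constant integer coefficients $\binom{\delta - 1}{i - d + \delta - 1}$ preserves the degree bound.

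The main obstacle is the combinatorial bookkeeping of nested semisimple partitions: each invocation of the inductive hypothesis for each~$i$ introduces its own finite polyhedral subdivision, and these must be refined simultaneously along with the dimension partition and the polyhedral partition supplied by Theorem~\ref{t:hilbert-coeffs} into a single polyhedral subdivision of~$Z$ on whose cells $\hdeg(M_\nn)$ is genuinely given by one quasipolynomial. Since semisimple sets are closed under finite intersection (by Proposition~\ref{p:valid-class-X} and Theorem~\ref{t:presburger}) and quasipolynomials under finite sums with constant scalar coefficients, such a refinement stays within the class, and the degree bound~$d$ propagates unchanged through the induction.
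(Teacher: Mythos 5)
Your proof is correct and takes essentially the same approach as the paper's: the paper unwinds the recursion in Definition~\ref{d:hdeg} into a linear combination of degrees of iterated $\Ext$ modules and invokes Theorems~\ref{t:hilbert-coeffs} and~\ref{t:ext-family}, and your induction on dimension is the step-by-step form of that same unwinding, with the dimension partition (piecewise quasiconstancy of $\dim$ from Theorem~\ref{t:loc-coh-final}) made explicit where the paper leaves it implicit. One small slip: the inequality you want is $\dim \Ext^i_R(M,R) + i \ls d$ (the standard fact over a Gorenstein ring of dimension $d$), not $\depth \Ext^i_R(M,R) + i \ls d$; depth~$\ls$~dim, so the depth bound you wrote does not imply the dimension bound $\dim \Ext^i_R(M_\nn,R) \ls d - i \ls \delta - 1$ that the recursion needs, though that bound is correct and is what drives the termination.
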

\begin{proof}
Since $\hdeg(M_\nn)$ is a linear combination of~$\deg(M_\nn)$ and degrees
of iterated $\Ext$ modules of the form
$\Ext_R^{i_1}(\Ext_R^{i_2}(\cdots(\Ext_R^{i_r}\big(M_\nn,R\big),R),\cdots,R)$, the
result is a consequence of Theorem~\ref{t:hilbert-coeffs} and
Theorem~\ref{t:ext-family}.
\end{proof}

\begin{remark}\label{r:hdegs}
As far as we are aware, this polynomial behavior for homological
degrees was not known, even for quotients $\{R/I^n\}_{n\in\NN}$ by powers
of a fixed monomial~ideal.
\end{remark}

\begin{remark}\label{r:adeg}
The same statement and proof work for the \emph{arithmetic degree}
$$
  \adeg(M)
  =
  \sum_{\pp\in\Ass M} {\rm mult}_M(\pp)\,\deg(R/\pp)
$$
\cite[Definition~2.3]{vasconcelos-hom-deg-1998} by its
characterization \cite[Proposition~2.4]{vasconcelos-hom-deg-1998} as a
sum over double $\Ext$ modules.  It would be difficult to verify
whether arbitrary extended degrees
\cite[p.\,347]{vasconcelos-cohom-degrees-1998}, such as \emph{unmixed
degree} \cite{cuong-quy2025}, behave piecewise quasipolynomially.
\end{remark}


\end{document}